\documentclass{amsart}
\topmargin= 0mm
\textheight= 220mm
\oddsidemargin= 0mm
\evensidemargin= 0mm
\textwidth=160mm
\usepackage{mathtools}
\usepackage{amsfonts}
\usepackage{amssymb}
\usepackage{palatino}
\usepackage{tikz}
\usetikzlibrary{ decorations.markings}
\usepackage{float}

\DeclareMathOperator{\ad}{ad}


\DeclareFontFamily{U}{mathx}{}
\DeclareFontShape{U}{mathx}{m}{n}{<-> mathx10}{}
\DeclareSymbolFont{mathx}{U}{mathx}{m}{n}
\DeclareMathAccent{\widehat}{0}{mathx}{"70}
\DeclareMathAccent{\widecheck}{0}{mathx}{"71}


\theoremstyle{plain}

\newtheorem{theorem}{Theorem}[section]
\newtheorem{corollary}[theorem]{Corollary}
\newtheorem{lemma}[theorem]{Lemma}
\newtheorem{proposition}[theorem]{Proposition}
\newtheorem{problem}[theorem]{Problem}
\theoremstyle{remark}
\newtheorem{remark}[theorem]{Remark}

\numberwithin{equation}{section}

\definecolor{deepgreen}{cmyk}{1,0,1,0.5}

\usepackage{hyperref}
\usepackage{mathtools}

\newcommand{\eps}{\varepsilon}

\newcommand{\norm}[2]{\left\Vert #1\right\Vert_{#2}}

\newcommand{\ba}{\breve{a}}
\newcommand{\bb}{\breve{b}}
\newcommand{\bbC}{\mathbb{C}}
\newcommand{\bbR}{\mathbb{R}}

\newcommand{\dbar}{\overline{\partial}}

\newcommand{\zbar}{\overline{z}}
\newcommand{\twomat}[4]
{
\left(
	\begin{array}{cc}
		{#1}	&	{#2}	\\[10pt]
		{#3}	&	{#4}
		\end{array}
\right)
}

\newcommand{\Twomat}[4]
{
\left[
	\begin{array}{cc}
		{#1}	&	{#2}	\\[10pt]
		{#3}	&	{#4}
		\end{array}
\right]
}

\newcommand{\diagmat}[2]
{
\left(
	\begin{array}{cc}
		{#1}	&	0	\\
		0		&	{#2}
		\end{array}
\right)
}
\newcommand{\offdiagmat}[2]
{
\left(
	\begin{array}{cc}
		0			&		{#1} 	\\
		{#2}		&		0
		\end{array}
\right)
}
\newcommand{\SixMatrix}[6]
{
\begin{figure}
\centering
\caption{#1}
\vskip 15pt
\begin{tikzpicture}
[scale=0.75]
%
%
\draw[dashed, thick]	 (-6,0) -- (6,0);
\draw[thick] 	(-4,4) -- (4,-4);
\draw[thick] 	(-4,-4) -- (4,4);
%
%
\draw	[fill]		(0,0)						circle[radius=0.075];
\node[below] at (0,-0.1) 				{$z_0$};
%
%
\node[above] at (3.5,2.5)				{$\Omega_1$};
\node[below]  at (3.5,-2.5)			{$\Omega_6$};
\node[above] at (0,3.25)				{$\Omega_2$};
\node[below] at (0,-3.25)				{$\Omega_5$};
\node[above] at (-3.5,2.5)			{$\Omega_3$};
\node[below] at (-3.5,-2.5)			{$\Omega_4$};
%
%
\node[above] at (0,1.25)				{$\twomat{1}{0}{0}{1}$};
\node[below] at (0,-1.25)				{$\twomat{1}{0}{0}{1}$};
%
%
\node[right] at (1.40,0.90)			{$#3$};
\node[left]   at (-1.40,0.90)			{$#4$};
\node[left]   at (-1.40,-0.90)			{$#5$};
\node[right] at (1.40,-0.90)			{$#6$};
\end{tikzpicture}
\label{fig 1}
\end{figure}
}
\begin{document}
	\title{perturbation of the nonlinear Schr\"odinger equation
 by a localized nonlinearity}
	\author{Gong Chen}
	\author{Jiaqi Liu}
	\author{Yuanhong Tian}
\address[Chen]{School of Mathematics, Georgia institute of technology. 686 Cherry Street, Skiles Building, Atlanta GA 30332-0160}
\email{gc@math.gatech.edu}
\address[Liu]{School of mathematics, University of Chinese Academy of Sciences. No.19 Yuquan Road, Beijing China }
\email{jqliu@ucas.ac.cn}
\address[Tian]{School of mathematics, University of Chinese Academy of Sciences. No.19 Yuquan Road, Beijing China}
\email{1838393768@qq.com}	
\thanks{The first author was partially supported by NSF grant DMS-2350301 and by Simons foundation MP-TSM-00002258. The Second author is partially supported by NSFC grant No.12201605 and CAS grant E4ER0101A2.}
\begin{abstract}
We revisit the perturbative theory of infinite dimensional integrable systems developed by P. Deift and X. Zhou \cite{DZ-2}, aiming to provide  new and simpler proofs of some  key $L^\infty$ bounds and  $L^p$ \emph{\textit{a priori}} estimates. Our proofs emphasizes a further step towards understanding focussing problems and extends the applicability  to other integrable models. As a concrete application, we examine the perturbation of the one-dimensional defocussing cubic nonlinear Schr\"odinger equation by a localized higher-order term. We introduce improved estimates to control the power of the perturbative term and demonstrate that the perturbed equation exhibits the same long-time behavior as the completely integrable nonlinear Schr\"odinger equation.

  
\end{abstract}

\maketitle

\section{Introduction}

\subsection{Background}
It is well-known that the $1$-d defocussing cubic nonlinear Schr\"odinger equation (NLS)
\begin{equation}
\label{eq:nls}
    iq_t+q_{xx}-2|q|^2q=0
\end{equation}
is a canonical example of  infinite dimensional integrable systems,  in the sense that it is the compatibility condition for the following commutator:
\begin{align}
\label{eq: compat}
    \left[\partial_x-U, \partial_t-W\right]=0,
\end{align}
where
\begin{align*}
U & =i z \sigma+\left(\begin{array}{cc}
0 & q \\
\bar{q} & 0
\end{array}\right), \\
W & =-i z^2 \sigma-z\left(\begin{array}{cc}
0 & q \\
\bar{q} & 0
\end{array}\right)+\left(\begin{array}{cc}
-i|q|^2 & i \partial_x q \\
-i \partial_x \bar{q} & i|q|^2
\end{array}\right).
\end{align*}
Moving beyond integrable structures, in the seminal paper by P.~Deift and X.~Zhou \cite{DZ-2}, after exploring the $L^p$ \emph{a priori} estimates of the resolvent operator associated to the Riemann-Hilbert problem, the authors investigated the perturbation of the cubic NLS with higher order nonlinearity:
\begin{equation}
    iq_t+q_{xx}-2|q|^2q-\eps|q|^\ell q=0.
\end{equation}
with $\ell>7/2$. 
The authors show that for $\varepsilon\ll 1$ whose size depends on the $H^{1,1}$ Sobolev norm of the initial data, solutions to perturbed NLS equation behave as $t\to \infty$ like solutions of the cubic NLS equation \eqref{eq:nls} which is completely integrable. Their work is the first major step towards building a rigorous perturbation theory for infinite dimensional integrable systems. \cite{DZ-2} relies partly on the $L^p$ \textit{a priori} estimate developed in \cite{DZ-1}. As a byproduct, Deift and Zhou calculated the long time asymptotics for Equation \eqref{eq:nls} whose initial data belongs to the $H^{1,1}$ space. Later, in \cite{DMM18} (see also a published version of this paper \cite{DMM}), Dieng and McLaughlin re-investigated the long time asymptotics of \eqref{eq:nls} and developed the $\overline{\partial}$ version of the nonlinear steepest descent method which is originally developed in \cite{DZ93}. We also mention that in \cite{BI}, the authors investigate the evolution of scattering data under perturbations of the Toda lattice.

In this paper, we revisit the perturbative method of \cite{DZ-2} and  give  new  proofs of $L^p$  \emph{\textit{a priori}} estimates and also $L^\infty$ bounds aiming at a further step towards understanding perturbation of focussing problems and the applicability of \cite{DZ-2} to other
integrable models. To illustrate applications of improved estimates by our new approach, we consider the long time dynamics for the following 1-d defocussing NLS with a localized nonlinearity:
\begin{align}
\label{eq: pnls}
    &iq_t+q_{xx}-2|q|^2q-\eps a(x)|q|^l q=0,\\
    \nonumber
    & q(x, t=0)=q_0(x) \in H^{1,1}(\mathbb{R}).
\end{align}
Here $0<\varepsilon\ll 1$, $l>3$ and $a(x)$ is a Schwarz function (We do not aim to pursue the optimal conditions for 
 $a(x)$ here). 
If we replace the RHS of \eqref{eq: compat} with $\varepsilon G(x)$ where
\begin{equation}
\label{G}
  G(x) =-ia(x)|q|^l\left(\begin{array}{cc}
0 & q \\
-\bar{q} & 0
\end{array}\right) 
\end{equation}
then it is easy to check that 
\begin{equation}
    \label{eq: pcompat}
    \left[\partial_x-U, \partial_t-W\right]=\varepsilon G(x)
\end{equation}
is the compatibility condition for \eqref{eq: pnls}.

In the following, we will set up problems and introduce notations based on the concrete example \eqref{eq: pnls}.

\subsection{The inverse scattering for the NLS}
Recall that \eqref{eq:nls} generates an isospectral flow for the problem
\begin{equation}
\label{L}
\frac{d}{dx} \Psi = iz \sigma \Psi + Q(x) \Psi
\end{equation}
where
$$ \sigma= \diagmat{\frac{1}{2}}{-\frac{1}{2}}, \,\,\, Q(x) = \offdiagmat{q(x)}{\overline{q(x)}}.$$
This is a standard AKNS system. If $u \in L^1(\bbR) $, Equation \eqref{L} admits bounded 
solutions for $z \in \mathbb{R}$.   There exist unique solutions $\Psi^\pm$ of \eqref{L} obeying the the following space asymptotic conditions
$$\lim_{x \to \pm \infty} \Psi^\pm(x,z) e^{-ix z \sigma_3} = \diagmat{1}{1},$$
and there is a matrix $S(z)$, the scattering matrix, with 
 $\Psi^+(x,z)=\Psi^-(x,z) S(z)$.
The matrix $T(z)$ takes the form
\begin{equation} \label{matrixT}
 S(z) = \twomat{a(z)}{\bb(z)}{b(z)}{\ba(z)} 
 \end{equation}
and  the determinant relation gives
$$ a(z)\ba(z) - b(z)\bb(z) = 1 $$
Combining this with the symmetry relations 
\begin{align} \label{symmetry}
\ba(z)=\overline{a( \zbar )}, \quad \bb(z) = \overline{ b(\zbar)}. 
\end{align}
we arrive at
$$|a(z)|^2-|b(z)|^2=1$$
and conclude that $a(z)$ is zero-free. By the standard inverse scattering theory, we formulate the reflection coefficient:
\begin{equation}
\label{reflection}
r(z)=\bb(z)/a(z), \quad z\in\bbR.
\end{equation}
In \cite{Zhou98}, it is shown that for $k, j$ integers with $k\geq 0$, $j\geq 1$, the direct scattering map $\mathcal{R}$ maps $H^{k,j}(\bbR)$ onto $H^{j,k}_1=H^{j,k}(\bbR)\cap \lbrace r: \norm{r}{L^\infty} <1\rbrace$ where $H^{j,k}$ norm is given by:
\begin{equation}
\label{sp: weighted}
    \norm{q}{H^{i,j}(\bbR)}
= \left( \norm{(1+|x|^j)q}{2}^2 + \norm{q^{(i)}}{2}^2 \right)^{1/2}. 
\end{equation}
and the map $\mathcal{R}: q\mapsto r$ is Lipschitz continuous. Since we are dealing with the defocussing NLS, only the reflection coefficient $r$ is needed for the reconstruction of the solution. 
The long-time behavior of the solution to the NLS equation is obtained through a sequence of transformations of the following RHP:
\begin{problem}
\label{prob:NLS.RH0}
Given $r(z) \in H^{1,1}(\bbR)$, 
find a $2\times 2$ matrix-valued function $m(z;x,t)$ on $\bbC \setminus \bbR$ with the following properties:
\begin{enumerate}
\item		$m(z;x,t) \to I$ as $|z| \to \infty$,
			\medskip
			
\item		$m(z;,x,t)$ is analytic for $z \in \mathbb{C} \setminus \bbR$ with continuous boundary values
			$$m_\pm(z;x,t) = \lim_{\varepsilon \to 0} m(z\pm i\varepsilon;x,t),$$
			\medskip
			
\item		
The jump relation $m_+(z;x,t) = m_	-(z;x,t) e^{i\theta \ad \sigma} v(z)$ holds, where
			\begin{equation}
			\label{NLS.V}
			e^{i\theta \ad \sigma} v(z)=\twomat{1-|r(z)|^2}
			{r(z)e^{i\theta}}
   {-\overline{r(z)}e^{-i\theta}}
   {1}
			\end{equation}
and the real phase function $\theta$ is given by
\begin{equation}
\label{NLS.phase}
\theta(z;x,t) = xz-tz^2
\end{equation}
with stationary points
\begin{equation}
    \label{stationary pt}
    z_0={ \dfrac{x}{2t}} .
    \end{equation}
    \end{enumerate}
\end{problem}
From the solution of Problem \ref{prob:NLS.RH0}, we recover
\begin{align}
\label{nls.q}
q(x,t) &= \lim_{z \to \infty} 2 i z M_{12}(x,t,z)
\end{align}
where the limit is taken in $\bbC\setminus \bbR$ along any direction not tangent to $\bbR$.  
\subsubsection{The Beals-Coifman solution} For $z\in \mathbb{R}$, introduce
\begin{equation}
    w_\theta=\left(w^-_\theta, w^+_\theta\right) = \left(\twomat{0}{r(z)e^{i\theta}}{0}{0}, \twomat{0}{0}{-\overline{r(z)}e^{-i\theta}}{0}\right)
    \end{equation}
    corresponding to the factorization of the jump matrix:
    \begin{equation}
        v_\theta=\left(v^{-}_\theta\right)^{-1} v^{+}_\theta= \twomat{1}{-r(z)e^{i\theta}}{0}{1}^{-1} \twomat{1}{0}{-\overline{r(z)}e^{-i\theta}}{1}.
    \end{equation}
Denote the associated singular integral operator,
\begin{equation}
    C_{v_\theta} h=C^{+}\left(h w^{-}_\theta\right)+C^{-}\left(h w^{+}_\theta\right)
\end{equation}
and let $\mu\in I+L^2(\bbR)$ solves the equation
\begin{equation}
\label{eq:mu}
     \left(1-C_{v_\theta} \right)^{-1} \mu=I.
\end{equation}
Then a simple calculation shows that
\begin{equation}
    m_{ \pm}=I+C^{ \pm}\left(\mu\left(w^{+}_\theta+w^{-}_\theta\right)\right)=\mu v^\pm_\theta
\end{equation}
is the unique solution to Problem \ref{prob:NLS.RH0}. Set
\begin{equation}\label{eq:boldQ}
    \mathbf{Q}(x,t)=\int_{\mathbb{R}} \mu(x, s)\left(w_\theta^{+}(s)+w_\theta^{-}(s)\right) d s.
\end{equation}
Then direct calculations give:
\begin{equation}
    \twomat{0}{q(x,t)}{\overline{q(x,t)}}{0}=\frac{1}{2 \pi} \mathrm{ad} \sigma(\mathbf{Q}).
\end{equation}
We now state the important result from \cite{DZ-3}:
\begin{theorem}
    Given $q_0$ the initial data of Equation \eqref{eq:nls}, then we have the direct scattering map
    \begin{equation}
        \mathcal{R}: H^{1,1}(\bbR)\ni q_0\mapsto r(z)\in H^{1,1}_1(\bbR)
    \end{equation}
    and inverse scattering map
     \begin{equation}
        \mathcal{R}^{-1}:  H^{1,1}_1(\bbR)\ni e^{i\theta}r(z)\mapsto q(x,t)\in H^{1,1}(\bbR).
    \end{equation}
    Both $\mathcal{R}$ and $\mathcal{R}^{-1}$ are Lipschitz continuous. Moreover, $ \mathcal{R}(q(t))$ evolves linearly
    \begin{equation}
    \label{time-1}
        \mathcal{R}(q(t))=\mathcal{R}(q(0)) e^{-i t z^2}=r(z) e^{-i t z^2}.
    \end{equation}
\end{theorem}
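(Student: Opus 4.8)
The statement gathers three facts: that the direct map $\mathcal{R}$ carries $H^{1,1}(\bbR)$ into $H^{1,1}_1(\bbR)$ and is Lipschitz; that the reflection coefficient evolves by the explicit phase $e^{-itz^2}$ under the NLS flow; and that the inverse map $\mathcal{R}^{-1}$ carries $H^{1,1}_1(\bbR)$ back into $H^{1,1}(\bbR)$ and is Lipschitz. I would treat them in that order, the first two being essentially classical and the last carrying the real analytic weight.

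\emph{Direct map.} From the AKNS system \eqref{L} I would recast the Jost solutions $\Psi^\pm$ as solutions of Volterra integral equations and solve these by Neumann series; for $q\in L^1(\bbR)$ this produces bounded solutions for $z\in\bbR$, hence the entries $a(z),b(z)$ of $S(z)$ in \eqref{matrixT}. The bound $\norm{r}{L^\infty}<1$ is immediate from $|a(z)|^2-|b(z)|^2=1$ together with the fact that $a$ is zero-free. To land $r$ in $H^{1,1}$ one transfers the two units of regularity of $q$: the derivative $q'\in L^2$ yields $zr(z)\in L^2$ after integrating by parts in the Volterra kernels, while the weight $xq\in L^2$ yields $r'(z)\in L^2$ since $\partial_z$ hitting the integral representations brings down a factor $x$. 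Lipschitz dependence of $q_0\mapsto r$ then follows because the Neumann series depends (even analytically) on $q$ in the relevant norms; this is precisely Zhou's theorem \cite{Zhou98} with $k=j=1$.

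\emph{Time evolution.} Fix a solution $q(x,t)$ of \eqref{eq:nls} and build the $t$-dependent Jost solutions. Writing $W^\infty=\lim_{x\to\pm\infty}W=-iz^2\sigma$ (the potential part of $W$ decays), the normalized Jost solutions obey $\partial_t\Psi^\pm=W\Psi^\pm-\Psi^\pm W^\infty$, so $S=(\Psi^-)^{-1}\Psi^+$ satisfies $\partial_t S=[W^\infty,S]$. Reading off entries gives $\partial_t a=0$ and a pure-phase evolution of the off-diagonal data, whence $r=\bb/a$ satisfies $\partial_t r=-iz^2 r$, i.e. $\mathcal{R}(q(t))=r(z)e^{-itz^2}$, which is \eqref{time-1}; consistency with the phase $\theta=xz-tz^2$ of Problem \ref{prob:NLS.RH0} is then just bookkeeping.

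\emph{Inverse map.} I would first establish unique solvability of Problem \ref{prob:NLS.RH0} for each fixed $(x,t)$. With the triangular factorization $v_\theta=(v_\theta^-)^{-1}v_\theta^+$ already recorded, the Beals--Coifman construction reduces the RHP to inverting $1-C_{v_\theta}$ on $L^2(\bbR)$; this operator is Fredholm of index zero, and a vanishing lemma — using $\det v_\theta\equiv 1$, the Schwarz-reflection symmetry of $v_\theta$, and $\norm{r}{L^\infty}<1$ in the defocussing case — shows it has trivial kernel, so it is boundedly invertible and $\mu=(1-C_{v_\theta})^{-1}I\in I+L^2$. The reconstruction \eqref{nls.q}, equivalently \eqref{eq:boldQ}, then defines $q(x,t)$, and the usual dressing argument (differentiating the RHP in $x$ and $t$) shows it solves \eqref{eq:nls}. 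The substantive point is the estimate $\norm{q(\cdot,t)}{H^{1,1}}\le C\big(\norm{r}{H^{1,1}_1}\big)$: differentiating \eqref{eq:boldQ} in $x$, one uses that $\partial_x$ hitting $e^{i\theta}$ brings down a factor $z$ absorbed by $zr(z)\in L^2$, while the $x$-weight is traded for a $\partial_z$ landing on $e^{i\theta}$; both steps require resolvent bounds for $(1-C_{v_\theta})^{-1}$ uniform in the parameters. Lipschitz continuity of $r\mapsto q$ then follows from the second resolvent identity applied to the difference of two reflection coefficients, again combined with these estimates. The main obstacle is exactly this last block of uniform resolvent and weighted estimates controlling $\partial_x q$ and $xq$ by $\norm{r}{H^{1,1}_1}$ — the oscillatory factor $e^{i\theta}$ with its stationary point $z_0=x/(2t)$ must be handled with care — while unique solvability, the dressing argument, and the time evolution are comparatively routine.
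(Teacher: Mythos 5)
Your outline is essentially the same route as the paper's sources: the paper itself gives no proof of this theorem, quoting it directly from Deift--Zhou \cite{DZ-3} (with Zhou \cite{Zhou98} supplying the $H^{k,j}\to H^{j,k}_1$ mapping of the direct transform), and your three steps --- Volterra/Neumann construction of the Jost solutions with the derivative--weight swap, the conjugation $\partial_t S=[W^\infty,S]$ giving \eqref{time-1}, and the Beals--Coifman reduction with a vanishing lemma plus uniform resolvent and weighted estimates for $\mathcal{R}^{-1}$ --- are precisely the arguments carried out there. Your sketch is correct as far as it goes; just be aware that the block you flag as the ``main obstacle'' (the uniform-in-$(x,t)$ resolvent bounds and the $H^{1,1}$ control of $\partial_x q$ and $xq$) is the substantive content of \cite{DZ-3} and is not something the present paper re-proves.
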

\subsection{The inverse scattering for the perturbed NLS} Let $q(x,t)$ be the solution to Equation \eqref{eq: pnls}. We then let 
\begin{equation}
    Q(x,t)=\twomat{0}{q(x,t)}{\overline{q(x,t)}}{0}
\end{equation}
be the potential of \eqref{L}. Following the standard direct and inverse scattering formalism, we can as well formulate the following Riemann-Hilbert problem:
\begin{problem}
\label{prob:NLS.RH1}
Find a $2\times 2$ matrix-valued function $m(z;x,t)$ on $\bbC \setminus \bbR$ with the following properties:
\begin{enumerate}
\item		$m(z;x,t) \to I$ as $|z| \to \infty$,
			\medskip
			
\item		$m(z;,x,t)$ is analytic for $z \in \mathbb{C} \setminus \bbR$ with continuous boundary values
			$$m_\pm(z;x,t) = \lim_{\varepsilon \to 0} m(z\pm i\varepsilon;x,t),$$
			\medskip
			
\item		
The jump relation $m_+(z;x,t) = m_	-(z;x,t) e^{ix \ad \sigma} v(z;q(t))$ holds, where
			\begin{equation}
			\label{NLS.V}
			e^{ix\ad \sigma} v(z;q(t))=\twomat{1-|r(z;q(t))|^2}
			{r(z;q(t))e^{ix}}
   {-\overline{r(z;q(t))}e^{-ix}}
   {1}.
			\end{equation}
\end{enumerate}
\end{problem}
Define
\begin{equation}
    r(t)(z)=e^{i t z^2} r(z ; q(t))
\end{equation}
and the following integral equation is originally obtained in \cite{Kaup} and \cite{KN}:
\begin{equation}
\label{eq:r}
    r(t)(z)=r_0(z)+\varepsilon \int_0^t d s e^{i z^2 s} \int_{-\infty}^{\infty} d y e^{-i y z}\left(m_{-}^{-1}(z ; y,q(s)) G(q(y, s)) m_{-}(z ; y, q(s))\right)_{12}
\end{equation}
where
\begin{equation}
    r_0(z)=\mathcal{R}\left(q_0\right)(z), \quad q_0=q(t=0).
\end{equation}

\subsection{Main results}
Now we introduce our main results for the asymptotics of \eqref{eq: pnls}. Due to their technicality, we postpone the presentation of our improved  estimates in Section \ref{sec:dbar} and Section \ref{sec:priori}. 

\begin{theorem}
\label{thm:r}
Assume $q_0=q(x,0)\in H^{1,1} $  with $\norm{r_0}{H^{1,1}}=\eta$ and $\norm{r_0}{L^\infty}=\rho<1$. There exists $\varepsilon(\rho, \eta)\ll 1$ such that $q(x,t) \in C\left([0, \infty), H^{1,1}\right)$ solves \eqref{eq: pnls} with  $
\varepsilon=\varepsilon(\rho, \eta)$ and $r(t)(z)=e^{i t z^2} \mathcal{R}(q(t))(z)$ uniquely solves \eqref{eq:r}. Moreover, we have that for $t\in (0,+\infty)$, $\norm{r(t)}{H^{1,1}}<2\eta$ and $\norm{r(t)}{L^\infty}<(1+\rho)/2$ .
\end{theorem}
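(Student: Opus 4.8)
The plan is to treat \eqref{eq:r} as a fixed-point equation for the time-dependent reflection coefficient, and to read off the solution $q$ of \eqref{eq: pnls} from the reconstruction formula \eqref{nls.q}. Fix $T\in(0,\infty]$ and set
\[
X_T=\Big\{\,r\in C\big([0,T];H^{1,1}(\bbR)\big)\ :\ r(0)=r_0,\ \ \norm{r(s)}{H^{1,1}}\le\tfrac32\eta,\ \ \norm{r(s)}{L^\infty}\le\tfrac{1+\rho}{2}\ \ \forall s\in[0,T]\,\Big\},
\]
a closed subset of $C([0,T];H^{1,1})$ that contains $r_0$. Given $r\in X_T$, define $q(\cdot,s)=\mathcal R^{-1}\!\big(e^{-isz^2}r(s)\big)$; this is legitimate since $\norm{e^{-isz^2}r(s)}{L^\infty}=\norm{r(s)}{L^\infty}<1$, and the Lipschitz mapping properties of $\mathcal R^{-1}$ from \cite{Zhou98,DZ-3} put $q(\cdot,s)\in H^{1,1}$. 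Form the Beals--Coifman solution $m_-(z;y,q(s))$ of \eqref{L} with potential $q(\cdot,s)$, and let $\Phi(r)(t)(z)$ denote the right-hand side of \eqref{eq:r}. A fixed point $r\in X_\infty$ of $\Phi$ is, reversing the derivation of \eqref{eq:r} (the Lax-pair correspondence behind \eqref{eq: pcompat}), the rotated scattering data of a function $q\in C([0,\infty);H^{1,1})$ solving \eqref{eq: pnls}; local well-posedness of \eqref{eq: pnls} in $H^1$ (the perturbation $\eps a(x)|q|^lq$ being a localized term and $H^1(\bbR)\hookrightarrow L^\infty$) then shows this is the unique such solution, giving the remaining assertions.

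Two \emph{a priori} inputs drive the estimate of $\Phi$. First, the $L^\infty$ bounds and $L^p$ \emph{a priori} estimates for the resolvent $(1-C_{v_\theta})^{-1}$ and for $m_-,\ \partial_y m_-,\ \partial_z m_-$ established in Sections~\ref{sec:dbar}--\ref{sec:priori}, whose constants depend only on $\norm{r}{L^\infty}<1$ (the derivatives carrying at most a polynomial weight in $y$). Second, applying the $\overline{\partial}$ steepest-descent analysis to the reconstruction problem (Problem~\ref{prob:NLS.RH1}) for $q(\cdot,s)$, whose data obey $\norm{r(s)}{L^\infty}\le\tfrac{1+\rho}{2}$ and $\norm{r(s)}{H^{1,1}}\le\tfrac32\eta$, yields $\norm{q(\cdot,s)}{L^\infty}\lesssim_{\rho,\eta}(1+s)^{-1/2}$; moreover, since the rotation $e^{-isz^2}$ leaves $\norm{z\,r(s)}{L^2}$ unchanged, the trace formula gives the \emph{uniform} control $\norm{q(\cdot,s)}{H^1}\lesssim_{\eta}1$, only the weighted norm $\norm{xq(\cdot,s)}{2}$ being allowed to grow (linearly in $s$).

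For the estimate itself one writes the inner $y$-integrand of \eqref{eq:r} as $a(y)\,|q(y,s)|^{l}\,B(z,y,s)$ with $B$ bilinear in the entries of $m_-(z;y,q(s))$ and of $q(y,s),\overline{q(y,s)}$, so that $|B|\lesssim\norm{m_-}{L^\infty}^2|q(y,s)|$; each differentiation in $y$ or $z$ produces $\partial_y m_-$ or $\partial_z m_-$, whose polynomial growth in $y$ is swallowed by the Schwartz decay of $a$. Combining this with the two inputs above, the inner $y$-integral $F(z,s)$ of \eqref{eq:r}, together with $\partial_z F$ and $zF$, is controlled in $L^2_z$ and $L^\infty_z$ by $C(\rho,\eta)$ times a decaying power of $(1+s)$. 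The slowest-decaying contribution to $\partial_z\big(\Phi(r)(t)\big)$ is $\eps\int_0^t 2izs\,e^{iz^2s}F(z,s)\,ds$, where the $z$-derivative has fallen on $e^{iz^2s}$ and produced the extra factor $s$; its $L^2_z$-norm is $\lesssim\eps\int_0^t s\,\norm{zF(\cdot,s)}{L^2}\,ds$, and the improved estimates of Section~\ref{sec:priori} — which gain an extra half power of $s$ over the naive interpolation bound — give $\norm{zF(\cdot,s)}{L^2}\lesssim_{\rho,\eta}(1+s)^{-(l+1)/2}$. This is exactly where the exponent enters: $\int_0^\infty s\,(1+s)^{-(l+1)/2}\,ds<\infty$ precisely when $l>3$ (the naive bound would force $l>4$). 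All other terms in $\Phi(r)(t)-r_0$, in $H^{1,1}$ and in $L^\infty$, are controlled likewise and converge already for $l>2$. Hence $\norm{\Phi(r)(t)-r_0}{H^{1,1}}+\norm{\Phi(r)(t)-r_0}{L^\infty}\le\eps\,C(\rho,\eta)$ uniformly in $t$, and a parallel Lipschitz estimate for $\Phi(r_1)-\Phi(r_2)$ — got by polarizing these bounds and invoking the Lipschitz dependence of $m_-$ and of $\mathcal R^{\pm1}$ on the data — makes $\Phi$ a contraction on $X_\infty$ once $\eps=\eps(\rho,\eta)$ is small enough that $\eps\,C(\rho,\eta)\le\min\{\eta/2,(1-\rho)/4\}$. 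The unique fixed point then obeys $\norm{r(t)}{H^{1,1}}\le\tfrac32\eta<2\eta$ and $\norm{r(t)}{L^\infty}\le\rho+\tfrac{1-\rho}{4}<\tfrac{1+\rho}{2}$ for all $t$, and by the first paragraph produces $q\in C([0,\infty);H^{1,1})$ solving \eqref{eq: pnls} with $r(t)=e^{itz^2}\mathcal R(q(t))$; any solution of \eqref{eq:r} with these bounds lies in $X_\infty$, hence equals this fixed point.

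The main obstacle is the weighted, uniform-in-$(x,t)$ control in the third paragraph: one must bound the Duhamel-type term of \eqref{eq:r} in the $z$-space $H^{1,1}$ with a time-decay rate fast enough to integrate to a finite constant over $[0,\infty)$. This rests on the improved $L^p$ and $L^\infty$ \emph{a priori} estimates for $m_-$ and $(1-C_{v_\theta})^{-1}$ and on using the Schwartz cut-off $a$ to defeat the polynomial $y$-growth of $\partial_y m_-$ and $\partial_z m_-$; the sharp threshold $l>3$ — to be compared with $\ell>7/2$ in \cite{DZ-2}, where no localization is present — is dictated by the single term in which a $z$-derivative falls on the oscillatory factor $e^{iz^2s}$ and produces an extra power of $s$.
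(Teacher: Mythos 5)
Your overall architecture (contraction for the Duhamel equation \eqref{eq:r} in a ball of $C([0,\infty);H^{1,1})$, with the solution of \eqref{eq: pnls} recovered through $\mathcal{R}^{-1}$ and the a priori bounds $\norm{r(t)}{H^{1,1}}<2\eta$, $\norm{r(t)}{L^\infty}<(1+\rho)/2$ extracted from the smallness of $\varepsilon$) is the same as the paper's, which runs the fixed point directly off Proposition \ref{prop:G-estimate} with the explicit conditions \eqref{epsi1}--\eqref{epsi4}. But your quantitative treatment of the $z$-derivative of the Duhamel term contains a genuine gap. You let $\partial_z$ fall on $e^{iz^2s}$, producing the naked factor $s$, and then compensate by asserting $\norm{zF(\cdot,s)}{L^2}\lesssim_{\rho,\eta}(1+s)^{-(l+1)/2}$, so that $\int_0^\infty s(1+s)^{-(l+1)/2}\,ds<\infty$ for $l>3$. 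That bound is not what the improved estimates give and is in fact unattainable: by Plancherel the leading contribution to $zF$ is $\norm{\partial_x\bigl(a|q|^l q\bigr)}{L^2}$, which contains the term $\norm{a}{L^\infty}\norm{q}{L^\infty}^l\norm{\partial_x q}{L^2}$; since $\norm{\partial_x q}{L^2}$ is only bounded in time (it does not decay), the best rate is $(1+s)^{-l/2}$, exactly what Lemma \ref{le:zF2} proves. With the correct rate your bookkeeping requires $\int_0^\infty s(1+s)^{-l/2}\,ds<\infty$, i.e.\ $l>4$, so your argument does not reach the stated threshold $l>3$.

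The paper closes this gap differently, and you would need to adopt that mechanism: the factor $s$ is never allowed to stand alone against $\norm{zF}{L^2}$. Instead $\partial_z F$ is estimated directly through the decomposition \eqref{eq: parzF}, using the operators $L$ and $\tilde L=ix\,\mathrm{ad}\,\sigma-2t\partial_x$ and the identity $(L+2tQ)m_\pm=(\partial_z-\tilde L)m_\pm$, so that the troublesome part $2tz$ of $\partial_z\theta$ is converted into $2t\partial_x$ acting on $G$. The factor $t$ then lands on $\partial_x a(x)$ and on $\tilde L\mathbf{Q}$, where it is absorbed by $\norm{q}{L^\infty}^{l+1}\sim t^{-(l+1)/2}$ and the uniform bounds on $L_Q$, $L'_Q$; this is precisely the content of \eqref{est LG2}--\eqref{est:LG1} and Lemma \ref{le:dF}, which yield $\norm{\partial_z F}{L^2}\lesssim(1+t)^{-(l-1)/2}$, integrable exactly when $l>3$ (and similarly $\norm{\Delta\partial_z F}{L^2}\lesssim(1+t)^{-(l/2+1/(2p)-3/4)}$ for the contraction, whence the conditions \eqref{epsi1}--\eqref{epsi2}). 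So your identification of the critical term (the $z$-derivative of the oscillatory phase) is right, but the estimate you invoke to tame it is not available, and without the $\tilde L$ integration-by-parts step the proof as written only gives $l>4$.
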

\begin{proof}
    Equation \eqref{eq:r} can be written in the following form:
\begin{equation}
    \label{eq:rF}
    r(t)(z)=r_0+\varepsilon \int_0^t F(s, r(s)) d s
\end{equation}
with
\begin{equation}
\label{F}
    F(z,t;r)=P_{12} \int_{-\infty}^{\infty} e^{-i\left(y z-t z^2\right)} m_{-}^{-1}\left(y, z ; \mathcal{R}^{-1}\left(e^{-i z^2 t} r\right)\right) G\left(\mathcal{R}^{-1}\left(e^{-i z^2 t} r\right)\right)(y) m_{-}\left(y, z ; \mathcal{R}^{-1}\left(e^{-i z^2 t} r\right)\right) d y
\end{equation}
where $P_{12}$ denotes the projection of $(2\times2)$-matrices onto their $(1, 2)$-entries. For each fixed $t>0$ and $0<p-2\ll 1$, picking
\begin{equation}
\label{epsi1}
    \varepsilon <\frac{l/2-3/2}{ C_{F}^{1,1}(\rho, \eta)}
\end{equation}
and 
\begin{equation}
\label{epsi2}
    \varepsilon <\frac{l/2+1/2p-7/4}{C_{\Delta F}^{1,1}(\rho, \eta) }
\end{equation}where $C_{\Delta F}^{1,1}(\rho, \eta)$ and $ C_{F}^{1,1}(\rho, \eta)$ are from Proposition \ref{prop:G-estimate}, the conclusion of Proposition \ref{prop:G-estimate}
implies that \eqref{eq:rF} is a contraction, whence,  existence and uniqueness of the solution follow. Moreover, it is easy to check that
\begin{equation}
\label{epsi3}
    \varepsilon <\frac{\eta}{2}\frac{l/2-3/2}{C_{F}^{1,1}((\rho+1)/2, 2\eta)} ,
\end{equation}
and 
\begin{equation}
\label{epsi4}
     \varepsilon <\frac{1-\rho}{3}\frac{l/2-3/2}{ C_{F}^{1,1}((\rho+1)/2, 2\eta)}
\end{equation}
imply that $\norm{r(t)}{H^{1,1}}<2\eta$ and $\norm{r(t)}{L^\infty}<(1+\rho)/2$. Therefore we choose $\varepsilon=\varepsilon(\rho, \eta) $ satisfying \eqref{epsi1}-\eqref{epsi4} and then the desired results follow.
\end{proof}
\begin{theorem}
\label{main}
  For $q_0=q(x,0)\in H^{1,1}$, if  $q(x,t)$ solves Equation \eqref{eq: pnls} with $\varepsilon=\varepsilon(\rho, \eta)$ satisfying \eqref{epsi1}-\eqref{epsi4}, then $q(x,t)$ admits the following asymptotic expansion:
  \begin{equation}
      q(x,t)=q_{as}(x,t)+\mathcal{O}(t^{-3/4})
  \end{equation}
  where
  \begin{align*}
      q_{as}(x,t)&=t^{-1 / 2} \alpha\left(z_0\right) e^{i x^2 / 4 t-i \nu\left(z_0\right) \log 2 t},\\
      \nu\left(z_0\right) &=-\frac{1}{2 \pi} \log \left(1-\left|r(t)\left(z_0\right)\right|^2\right), \quad \left|\alpha\left(z_0\right)\right|^2=\frac{1}{2} \nu\left(z_0\right)\\
      \arg \alpha\left(z_0\right)&=\frac{1}{\pi} \int_{-\infty}^{z_0} \log \left(z_0-z\right) d\left(\log \left(1-\left|r(t)(z)\right|^2\right)\right)+\frac{1}{4} \pi+\arg \Gamma\left(i \nu\left(z_0\right)\right)+\arg \left[r(t)\left( z_0\right) \right].
      \end{align*}
      Here $\Gamma $ is the gamma-function, $z_0=x / 2 t$,  and the error term is uniform in $x$.
\end{theorem}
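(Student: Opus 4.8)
The plan is to reduce the long-time analysis of $q(x,t)$ to a single $t$-frozen Riemann--Hilbert problem and then apply the $\dbar$-steepest descent method of Dieng--McLaughlin \cite{DMM18, DMM} (the $\dbar$-refinement of \cite{DZ93}) to that problem. By the inverse scattering formalism of Problem \ref{prob:NLS.RH1} together with the reconstruction formula \eqref{nls.q}, for each fixed $(x,t)$ the value $q(x,t)$ is obtained from the $2\times 2$ RHP whose jump on $\bbR$ is $e^{i\theta \ad \sigma}v(z)$ with $v(z)$ built from the single datum $r(t)(z)=e^{itz^2}\mathcal{R}(q(t))(z)$ and the classical phase $\theta(z;x,t)=xz-tz^2$ of \eqref{NLS.phase}, with stationary point $z_0=x/2t$. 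In other words, although $q$ solves the non-integrable equation \eqref{eq: pnls}, at each instant $t$ it is governed by exactly the same RHP as the defocussing cubic NLS; the only difference is that the reflection coefficient is the $t$-dependent function $r(t)$ produced by the integral equation \eqref{eq:r}, which coincides with the linearly-evolved datum when $\eps=0$.

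The input that renders this harmless is Theorem \ref{thm:r}: for the admissible choice $\eps=\eps(\rho,\eta)$ one has, uniformly in $t\in(0,\infty)$, $\norm{r(t)}{H^{1,1}}<2\eta$ and $\norm{r(t)}{L^\infty}<(1+\rho)/2<1$. Hence $r(t)$ lies in a fixed bounded subset of $H^{1,1}_1(\bbR)$, which is precisely the regularity class for which the $\dbar$-steepest descent method is valid; moreover the potential is zero-free (defocussing), so there are no discrete spectral contributions. I would then run the usual chain of transformations: (i) conjugate by the scalar function $\delta(z;z_0)$ solving the scalar RHP with jump $1-|r(t)(z)|^2$ on $(-\infty,z_0)$, which is well defined with $\nu(z_0)=-\tfrac{1}{2\pi}\log(1-|r(t)(z_0)|^2)$ and whose bounds depend only on $\rho$; (ii) deform along the steepest-descent rays through $z_0$, opening lenses with $H^1$-controlled non-analytic ($\dbar$) extensions of $r(t)$ and of $1/(1-|r(t)|^2)$; (iii) split the resulting mixed $\dbar$--RHP into a local model at $z_0$, a small-norm RHP on the remaining deformed contour, and a pure $\dbar$-problem.

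The local model is solved explicitly in terms of parabolic cylinder functions and yields precisely the leading term $q_{as}(x,t)=t^{-1/2}\alpha(z_0)e^{ix^2/4t-i\nu(z_0)\log 2t}$, with $|\alpha(z_0)|^2=\tfrac12\nu(z_0)$ and the stated argument; the reflection coefficient enters only through the boundary data of $\delta$ and through the value $r(t)(z_0)$ at the stationary point, which is why the formulas for $\nu$ and $\alpha$ are the classical ones with $r$ replaced by $r(t)$. The small-norm RHP and the $\dbar$-problem each contribute $\mathcal{O}(t^{-3/4})$, with implied constants depending only on the fixed bounds $\norm{r(t)}{H^{1,1}}\le 2\eta$ and $\norm{r(t)}{L^\infty}\le(1+\rho)/2$; since these are uniform in $t$, the resulting error is uniform in $x$. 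Assembling the contributions through \eqref{nls.q} gives $q(x,t)=q_{as}(x,t)+\mathcal{O}(t^{-3/4})$.

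The two points that keep this from being a mere citation are the following. First, one must confirm that the reconstruction RHP is legitimately driven by the \emph{frozen} datum $r(t)$: this uses that $r(t)(z)=e^{itz^2}\mathcal{R}(q(t))(z)$ solves \eqref{eq:r} (Theorem \ref{thm:r}) and that $q(t)\in H^{1,1}$ so that $\mathcal{R}(q(t))\in H^{1,1}_1$, making Problem \ref{prob:NLS.RH1} uniquely solvable and \eqref{nls.q} valid at time $t$. Second --- and this is the real work --- every estimate in the $\dbar$-steepest descent analysis must be tracked so that it depends only on $\norm{r(t)}{H^{1,1}}$ and on $1-\norm{r(t)}{L^\infty}$, and not on any finer structure of $r(t)$ (analyticity, or regularity in $t$), so that the $\mathcal{O}(t^{-3/4})$ bound is genuinely uniform in both $x$ and $t$. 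This uniformity, supplied by Theorem \ref{thm:r}, is the crux of transferring the classical defocussing-NLS asymptotics to the perturbed equation \eqref{eq: pnls}.
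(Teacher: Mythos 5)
Your proposal is correct and follows essentially the same route as the paper: the paper's proof of this theorem consists precisely of invoking Theorem \ref{thm:r} for the uniform $H^{1,1}$ and $L^\infty$ bounds on $r(t)$ and then citing the $\dbar$-nonlinear steepest descent analysis of \cite{DMM} applied to the reconstruction RHP driven by the frozen datum $r(t)$. You have merely spelled out the intermediate steps (conjugation by $\delta$, lens opening, parabolic cylinder local model, small-norm and $\dbar$ error terms) that the paper leaves to the cited reference.
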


\begin{proof}
    The asymptotic formula follows from Theorem \ref{thm:r} and an application of the nonlinear steepest descent method as is given by
    \cite{DMM}.
\end{proof}
From Theorem \ref{thm:r} we deduce that for $t_2>t_1\gg 0$,
\begin{equation}
    \left\|r\left(t_2\right)-r\left(t_1\right)\right\|_{H^{1,1}} \leqslant \frac{ C_{F}^{1,1}((\rho+1)/2, 2\eta)}{l/2-1/2}\left(\frac{1}{\left(1+t_1\right)^{l/2-3/2}}-\frac{1}{\left(1+t_2\right)^{l/2-3/2}}\right),
\end{equation}
and so $\lbrace r(t) \rbrace$ is a Cauchy sequence in $t$ and $r_\infty:=\lim _{t \rightarrow \infty} r(t)$ exists in $H^{1,1}$. Then by \cite[(4.21)]{DZ-2}, with $\mathbf{Q}$ from \eqref{eq:boldQ},
\begin{equation}
    \left\|\mathbf{Q}\left(e^{-i \delta^2 t} r(t)\right)-\mathbf{Q}\left(e^{-i \delta^2 t} r_{\infty}\right)\right\|_{L^{\infty}(d x)} \leqslant \frac{c}{(1+t)^{1 / 2 p+1 / 4}}\left\|r(t)-r_{\infty}\right\|_{H^{1,1}},
\end{equation}
by choosing suitable $l$ and $p$, we arrive at the following corollary:
\begin{corollary}
 For $q_0=q(x,0)\in H^{1,1}$, if  $q(x,t)$ solves Equation \eqref{eq: pnls} with $\varepsilon=\varepsilon(\rho, \eta)$ satisfying \eqref{epsi1}-\eqref{epsi4}, then for some $\kappa>0$, $q(x,t)$ admits the following asymptotic expansion:
  \begin{equation}
      q(x,t)=\tilde{q}_{as}(x,t)+\mathcal{O}\left(t^{-1/2-\kappa}\right)
  \end{equation}
  where
  \begin{align*}
      \tilde{q}_{as}(x,t)&=t^{-1 / 2} \alpha\left(z_0\right) e^{i x^2 / 4 t-i \nu\left(z_0\right) \log 2 t},\\
      \nu\left(z_0\right) &=-\frac{1}{2 \pi} \log \left(1-\left|r_\infty\left(z_0\right)\right|^2\right), \quad \left|\alpha\left(z_0\right)\right|^2=\frac{1}{2} \nu\left(z_0\right)\\
      \arg \alpha\left(z_0\right)&=\frac{1}{\pi} \int_{-\infty}^{z_0} \log \left(z_0-z\right) d\left(\log \left(1-\left|r_\infty(z)\right|^2\right)\right)+\frac{1}{4} \pi+\arg \Gamma\left(i \nu\left(z_0\right)\right)+\arg r_\infty\left( z_0\right).
      \end{align*}
      Here $\Gamma $ is the gamma-function, $z_0=x / 2 t$,  and the error term is uniform in $x$.
\end{corollary}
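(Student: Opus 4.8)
The plan is to deduce the statement from the steepest‑descent asymptotics already available for the completely integrable flow, using the two estimates displayed just above the corollary to control the error incurred when the time‑dependent reflection coefficient $r(t)$ is replaced by its limit $r_\infty$. Write $q[r](x,t)$ for the potential recovered through Problem~\ref{prob:NLS.RH0} from a reflection coefficient $r$ (via $q[r](x,t)=\lim_{z\to\infty}2iz\,m_{12}(z;x,t)$, with phase $\theta=xz-tz^{2}$); absorbing the linear time evolution into $\theta$ identifies the jump of Problem~\ref{prob:NLS.RH0} with data $r(t)$ with that of Problem~\ref{prob:NLS.RH1}, so $q(x,t)=q[r(t)](x,t)$, whereas $q[r_\infty]$ is the solution of the unperturbed NLS \eqref{eq:nls} with scattering data $r_\infty$.

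\emph{Step 1 (the limiting data).} By Theorem~\ref{thm:r} every $r(t)$ stays in the fixed ball $\{\norm{r}{H^{1,1}}<2\eta,\ \norm{r}{L^\infty}<(1+\rho)/2\}$; letting $t_2\to\infty$ with $t_1=t$ in the Cauchy estimate displayed above shows that $r_\infty:=\lim_{t\to\infty}r(t)$ exists in $H^{1,1}$, lies in the same ball (in particular $r_\infty\in H^{1,1}_1$), and
\begin{equation*}
\norm{r(t)-r_\infty}{H^{1,1}}\le \frac{C_{F}^{1,1}((\rho+1)/2,2\eta)}{l/2-1/2}\,\frac{1}{(1+t)^{l/2-3/2}}.
\end{equation*}
\emph{Step 2 (steepest descent for the frozen data).} Since $r_\infty\in H^{1,1}_1$ does not depend on $t$, $q[r_\infty]$ solves the cubic NLS \eqref{eq:nls} with initial data $\mathcal{R}^{-1}(r_\infty)\in H^{1,1}$ and reflection coefficient $r_\infty e^{-itz^{2}}$; applying the $\dbar$‑nonlinear steepest descent method of \cite{DMM} to Problem~\ref{prob:NLS.RH0} with this data --- the same computation that proves Theorem~\ref{main}, now carried out with the fixed coefficient $r_\infty$ in place of $r(t)$ --- gives $q[r_\infty](x,t)=\tilde q_{as}(x,t)+\mathcal{O}(t^{-3/4})$, uniformly in $x$, with $\tilde q_{as}$ exactly the profile in the statement.

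\emph{Step 3 (comparison and bookkeeping).} By \cite[(4.21)]{DZ-2}, recalled just above, the reconstruction map is Lipschitz in the scattering data with a time‑decaying constant, so that
\begin{equation*}
\norm{q(\cdot,t)-q[r_\infty](\cdot,t)}{L^\infty(dx)}\le\frac{c}{(1+t)^{1/2p+1/4}}\,\norm{r(t)-r_\infty}{H^{1,1}};
\end{equation*}
inserting Step~1 bounds the right‑hand side by $c'(1+t)^{-(l/2+1/2p-5/4)}$. Combining this with Step~2,
\begin{equation*}
q(x,t)=\tilde q_{as}(x,t)+\mathcal{O}(t^{-3/4})+\mathcal{O}\!\left(t^{-(l/2+1/2p-5/4)}\right),
\end{equation*}
uniformly in $x$. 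Both remainders are $o(t^{-1/2})$ exactly when $l/2+1/2p-5/4>1/2$, i.e. $l/2+1/2p-7/4>0$ --- which is precisely the positivity condition required for an admissible $\eps$ in \eqref{epsi2}, and is guaranteed under the running hypotheses $l>3$ and $0<p-2\ll1$. Taking $\kappa:=\min\{1/4,\ l/2+1/2p-7/4\}>0$ yields the claimed expansion.

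The one substantive input is Step~2: we need the $\mathcal{O}(t^{-3/4})$ error in the leading asymptotics to be uniform in the stationary point $z_0=x/2t$ and to hold for the \emph{fixed} soliton‑free data $r_\infty$, not just for $r(t)$. This is exactly the content of \cite{DMM} and uses nothing beyond what already underlies Theorem~\ref{main}; everything else is the exponent arithmetic above. (Alternatively, one may bypass \cite[(4.21)]{DZ-2} in Step~3 by estimating the explicit profile directly, $\sup_x t^{1/2}\bigl|q_{as}(x,t;r(t))-q_{as}(x,t;r_\infty)\bigr|\lesssim(\log 2t)\norm{r(t)-r_\infty}{H^{1,1}}$, since $\log(1-|r|^{2})$, $\nu$, and the integral defining $\arg\alpha$ are Lipschitz in $r$ on the fixed ball; this gives the same $\kappa$ up to the harmless logarithm.)
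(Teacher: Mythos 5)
Your proposal is correct and follows essentially the same route as the paper: the paper derives the corollary from the Cauchy estimate giving $r_\infty$ in $H^{1,1}$, the reconstruction bound \cite[(4.21)]{DZ-2} with the decaying factor $(1+t)^{-1/2p-1/4}$, and the steepest-descent asymptotics of \cite{DMM} for the limiting data, "choosing suitable $l$ and $p$." You have simply made the exponent arithmetic and the choice $\kappa=\min\{1/4,\,l/2+1/2p-7/4\}$ explicit, which matches the paper's intended argument.
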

\subsection {Outline of the paper}
The paper is organized as follows:
\begin{itemize}
    \item[1.]In Section \ref{sec:dbar},  we present a new and simpler proof to the $L^p$ \textit{a priori} bound of the resolvent operator  $(1-C_{v_\theta})^{-1}$ originally obtained in \cite{DZ-1}. Our proof makes use of the $\overline{\partial}$-nonlinear steepest descent method developed in \cite{DMM}. This method leads to a uniform $L^\infty$ bound of $m_\pm$ (also $\mu$) which are solutions to Problem \ref{prob:NLS.RH1}. Our proof can be easily modified to accommodate the study of perturbations of other completely integrable PDEs. 
    \item[2.]  In Section \ref{sec:priori}, we develop the key \textit{a priori} estimates of the term $F$ given by \eqref{F}. Our proof follows closely \cite[Section 6]{DZ-2} and we make necessary modifications and also implement improvements developed in this paper. 
    \item[3.] In the appendix we obtain new bounds which are originally developed in \cite[Section 5]{DZ-2}. These new bounds rule out the $t$ growth thus reducing the power $l$ of the perturbative term of \eqref{eq: pnls}. The reader can compare our results with \cite[Lemma 5.29, Lemma 5.36]{DZ-2}. 
\end{itemize}

\begin{remark}
We end the introduction with some remarks on notations:
\begin{itemize}
    \item Throughout the text constants $c> 0$ are used generically. $c$ always denotes a constant independent of $x, t, \eta$ and $\rho$.
    \item The $\diamond$ symbol refers to the "dummy" variable of the integrand.
    \item We refer the reader to \cite[(4.1)-(4.3)]{DZ-2} for the definition of the square matrix symbol
$$\Twomat{k}{l}{i}{j}:=\frac{c\eta^k(1+\eta)^l}{(1+t)^i(1-\rho)^j}.$$
We keep this notation for the reader's reference. 
\item  $\Delta$ refers to the difference operator acting on some term $H$ scalar or matrix-valued that contains the reflection coefficient $r_i$, $i=1,2$:
$$\Delta H=H(r_2)-H(r_1).$$
\end{itemize}

\end{remark}

\section{$L^\infty$ estimates through $\overline{\partial}$-nonlinear steepest descent}
\label{sec:dbar}

In \cite{DZ-1}, the following \textit{a priori} estimate for RHP Problem \ref{prob:NLS.RH0} is obtained:
\begin{proposition}[Theorem 1.7 \cite{DZ-1}]
\label{prop:priori}
 Suppose $r \in H_1^{1,0},\|r\|_{H^{1}} \leqslant \lambda,\|r\|_{L^{\infty}} \leqslant \rho<1$. Then for any $x, t \in \mathbb{R}$,  there are
constants $\ell_1 = \ell_1(p), \ell_2 = \ell_2(p) > 0$ and for any $2<p<\infty,\left(1-C_{v_\theta}\right)^{-1}$  exists as bounded operators in $L^p(\mathbb{R})$ and satisfy the bounds

$$
\left\|\left(1-C_{v_\theta}\right)^{-1}\right\|_{L^p \rightarrow L^p} \leqslant K_p,
$$
where
\begin{equation}
K_p=\frac{c(1+\lambda)^{\ell_1}}{(1-\rho)^{\ell_2}}. 
\end{equation}
The constants $K_p$ may be chosen so that $K_p$ is increasing with $p$ and $K_p \geqslant K_2$.
\end{proposition}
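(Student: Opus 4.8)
The plan is to deduce Proposition \ref{prop:priori} from the $\dbar$-nonlinear steepest descent analysis of Problem \ref{prob:NLS.RH0}, rather than from direct kernel estimates on the resolvent as in \cite{DZ-1}. The case $p=2$ is classical: since the Beals--Coifman jump $v_\theta=(v_\theta^-)^{-1}v_\theta^+$ is built from $w_\theta^\pm$ of upper/lower triangular form, $1-C_{v_\theta}$ is Fredholm of index zero on $L^2(\bbR)$, and injectivity follows from the Schwarz-reflection/vanishing argument, the relevant constants being governed by $\|r\|_{L^\infty}\le\rho$; this yields $\|(1-C_{v_\theta})^{-1}\|_{L^2\to L^2}\le K_2$ of the advertised form, uniformly in $x,t$. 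The real content is the bound for $2<p<\infty$, and here I would run the deformation and then transfer invertibility back to $1-C_{v_\theta}$ along the chain of transformations, keeping track that each one costs only a fixed power of $(1+\lambda)$ and $(1-\rho)^{-1}$.

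First, introduce the scalar function $\delta(z)$ solving the RHP with jump $1-|r(z)|^2$ on $(-\infty,z_0)$; one has the uniform bounds $\|\delta^{\pm1}\|_{L^\infty}\le c(1-\rho)^{-c}$ together with Hölder control of $\delta_\pm$ governed by $\|r\|_{H^1}\le\lambda$. Conjugating $m$ by $\delta^{\ad\sigma}$ turns the jump on $\bbR$ into a product of a purely upper-triangular and a purely lower-triangular factor across the two half-lines, which corresponds to conjugating $C_{v_\theta}$ by the multiplication operator $h\mapsto \delta_\mp^{-\ad\sigma}h$ — bounded and boundedly invertible on $L^p$ with norm controlled by the $\delta$-bounds. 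Next, replace the triangular factors by $\dbar$-continuations $R_j$ supported in the sectors $\Omega_k$ abutting $z_0$: chosen so that $R_j$ agrees with $r$ (resp. $\overline r/(1-|r|^2)$) on $\bbR$, vanishes on the opening rays of a cross $\Sigma$ through $z_0$, and obeys $|\dbar R_j|\lesssim |r'|+|r|\cdot(\text{cutoff derivative})$. This converts Problem \ref{prob:NLS.RH0} into a mixed RHP--$\dbar$ problem on $\bbR\cup\Sigma$, and, through the usual factorization of its solution operator, splits the resolvent into (i) the pure RHP on the cross $\Sigma$ and (ii) a $\dbar$-correction expressed through the solid Cauchy operator $\bfP$ acting against $\dbar R_j$. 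All of these steps are algebraic conjugations/compositions of singular integral operators by multiplications whose $L^p$ norms are fixed powers of $(1+\lambda)$ and $(1-\rho)^{-1}$.

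On $\Sigma$ the jump equals $I$ away from $z_0$ with error $\exp(-ct|z-z_0|^2)$, because $\real(2i\theta)$ has a nondegenerate maximum at $z_0$ along each ray; near $z_0$, after the rescaling $z=z_0+\zeta/\sqrt{8t}$, it is exactly the jump of the parabolic-cylinder model whose data depend only on $r(z_0)$, hence only on $\rho$. Thus $1-C_{w_\Sigma}$ is uniformly invertible on $L^p(\Sigma)$ with norm $\le c(1+\lambda)^{\ell_1}(1-\rho)^{-\ell_2}$: the Cauchy projections on the self-intersecting contour are bounded on $L^p$ for $1<p<\infty$, the small-jump part contributes an operator of norm $O(t^{-1/2})$, and the local parametrix — an explicit bounded solution of the model problem with model resolvent bounded solely in terms of $\rho$ — controls the remainder uniformly in $x,t$. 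The $\dbar$-correction (ii) is dispatched by the standard estimate that $h\mapsto \bfP(h\,\dbar R_j)$ is bounded $L^p\to L^p\cap L^\infty$ with norm $\lesssim t^{-1/4}(1+\lambda)$, using $\dbar R_j\in L^2$ with the stated support. Unwinding the chain then gives $\|(1-C_{v_\theta})^{-1}\|_{L^p\to L^p}\le K_p$ of the claimed shape, and the monotonicity in $p$ together with $K_p\ge K_2$ is arranged by setting $K_p:=\max\{K_2,\dots\}$ and observing that every $L^p$-operator-norm constant entering (chiefly that of $C^\pm$ on $\Sigma$) can be taken non-decreasing in $p$. The main obstacle I anticipate is precisely the uniform-in-$(x,t)$ control of $\|(1-C_{w_\Sigma})^{-1}\|_{L^p\to L^p}$ near the stationary point $z_0$, where the jump does not decay: one must invoke the explicit parabolic-cylinder parametrix, check that its associated resolvent is bounded in $L^p$ by a constant depending only on $|r(z_0)|\le\rho$, and patch it to the exponentially small part without losing uniformity as $z_0$ sweeps $\bbR$ (equivalently as $t\to0$ or $t\to\infty$); the secondary, bookkeeping obstacle is ensuring the $\delta$-conjugation and the $\dbar$-extension each cost only a fixed power of $(1+\lambda)$ and $(1-\rho)^{-1}$, so that the composed bound retains the product form of $K_p$.
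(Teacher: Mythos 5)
Your strategy (run the $\dbar$-steepest descent and convert it into a resolvent bound) is in the same spirit as the paper, but the mechanism you use to convert solution estimates into operator estimates is exactly where the argument breaks down, and it is the step the paper does differently. The paper never carries $(1-C_{v_\theta})^{-1}$ through the chain of deformations. It invokes the single identity from \cite[(7.107)]{Deift},
\begin{equation*}
\left(I-C_{v_\theta}\right)^{-1} g=\left(C_{+}\left(g(v_\theta-I) m_{+}^{-1}\right)\right) m_{+} v_{+}^{-1}+g v_{+}^{-1},
\end{equation*}
so the only analytic input needed is a uniform $L^\infty$ bound on $m_{+}$ (supplied by the $\dbar$ analysis for large $t$ and by a Volterra argument for small $t$), and the $L^p$ bound \eqref{Kp} follows in three lines. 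In your proposal the transfer is instead attempted at the operator level: conjugation by $\delta^{\ad\sigma}$, the $\dbar$-extension, reduction to the cross, and then ``unwinding the chain.'' But the $\dbar$-extension and the contour augmentation are not conjugations of the Beals--Coifman operator by bounded multiplications; relating the resolvent of the original factorization on $\bbR$ to the resolvent of the deformed system, and to the resolvent attached to the different factorization used on $(-\infty,z_0)$, requires precisely the restriction/augmentation and change-of-factorization operator identities that form the technical core of \cite{DZ-1}, and your proposal supplies none of them (``through the usual factorization of its solution operator'' is not an argument). As written, the unwinding step is a genuine gap, not bookkeeping.

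The second gap is uniformity in $(x,t)$. Proposition \ref{prop:priori} asserts the bound for all real $x,t$, yet every quantitative ingredient you cite degenerates as $t\to 0$: the small-jump contribution on the cross is $O(t^{-1/2})$ and the $\dbar$-correction is $O(t^{-1/4}(1+\lambda))$, so the Neumann-series/parametrix scheme is available only for $t$ large relative to a constant depending on $(\rho,\lambda)$ (and $t<0$ needs at least a mirrored treatment). You name this as the ``main obstacle'' but do not resolve it. The paper resolves it by abandoning the deformation at small time and using a Volterra-series bound (Lemma \ref{lm:mbound2}), which costs the stronger hypothesis $r\in H^{1,1}_1$ --- this is exactly why the paper states its result as the alternative Proposition \ref{prop:new apriori} with constant \eqref{Kp}, while Proposition \ref{prop:priori} itself, under the weaker $H^{1,0}_1$ hypothesis, is simply quoted from \cite{DZ-1}. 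Under your stated hypotheses you would need either the original harmonic-analysis argument of \cite{DZ-1} or a substitute for the small-$|t|$ regime; the proposal contains neither. The $p=2$ discussion, the parabolic-cylinder model resolvent depending only on $\rho$, and the monotonicity-in-$p$ remark are fine, but they do not close these two gaps.
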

In this section, we will make use of the $\overline{\partial}$- steepest descent method to obtain new $L^\infty$ bound on $m_\pm$ and $\mu$ that do not grow in $t$ as $t\to +\infty$. See \cite[lemma 5.17, 5.23]{DZ-2} which play a key role in obtaining the important estimates \cite[(6.5)-(6.6)]{DZ-2}. This in turn leads to a new resolvent operator bound given by the previous proposition. It is known that the computations via the $\overline{\partial}$-nonlinear steepest descent are robust can be adapted to other completely integrable settings. In \cite{CL} and \cite{CLL}, long time asymptotics of the mKdV equation and sine-Gordon equation are studied respectively. $L^\infty$ norm of solutions to the corresponding Riemann-Hilbert problems $m_\pm$ then follow. So we expect that our new proofs here can be adapted to the perturbation theory of other integrable PDEs.  We also point out that the approach we propose here can be adapted to the focusing setting without assuming that $\norm{\rho}{L^\infty} <1$ which only appears the defocussing settings.  We then prove a new type of \textit{a priori} estimate which is an alternative version of Proposition \ref{prop:priori}:
\begin{proposition}
\label{prop:new apriori}
  Suppose $r \in H_1^{1,1},\|r\|_{H^{1,1}} \leqslant \eta,\|r\|_{L^{\infty}} \leqslant \rho<1$. Then for any $x, t \in \mathbb{R}$, and $2<p<\infty,\left(1-C_{v_\theta}\right)^{-1}$  exists as bounded operators in $L^p(\mathbb{R})$ and satisfy the bounds

$$
\left\|\left(1-C_{v_\theta}\right)^{-1}\right\|_{L^p \rightarrow L^p} \leqslant\mathcal{K}_p,
$$
where
\begin{equation}
\label{Kp}
\mathcal{K}_p(\rho, \eta)=(1+\rho)\left(\mathbf{c}_p \left(1+\rho\right)M_{\infty}^2+1\right). 
\end{equation}
The constant $\mathbf{c}_p$ is given by the $L^p$-boundedness of the Cauchy projection.
\end{proposition}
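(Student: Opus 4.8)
The plan is to realize the resolvent $(1-C_{v_\theta})^{-1}$ not through a direct operator-norm estimate on $L^p$, but by passing through the $\overline\partial$-steepest descent machinery to be developed in this section, exactly as it controls the solution $\mu$ of Problem~\ref{prob:NLS.RH1}. First I would recall that $\mu=(1-C_{v_\theta})^{-1}I$ is the Beals--Coifman solution, and that a solution $h=(1-C_{v_\theta})^{-1}f$ of $(1-C_{v_\theta})h=f$ can be written as $h=f+(1-C_{v_\theta})^{-1}(C_{v_\theta}f)$, so it suffices to bound $(1-C_{v_\theta})^{-1}$ applied to a generic $L^p$ source. The key input will be the uniform (in $x,t$) $L^\infty$ bound $M_\infty$ on $m_\pm$ (equivalently on $\mu$) obtained via the $\overline\partial$-method — this is the new ingredient replacing \cite[Lemma 5.17, 5.23]{DZ-2}. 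Since $M_\infty$ does not grow in $t$, the resulting resolvent bound $\mathcal{K}_p$ will likewise be $t$-independent.

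The main steps, in order, are as follows. (1) Use the $\overline\partial$-steepest descent deformation: conjugate the RHP by the usual triangular factorizations off the real axis, absorbing the jump on $\bbR$ into $\overline\partial$-regions $\Omega_j$ centered at the stationary point $z_0$, so that the jump contour reduces to the crosses in Figure~\ref{fig 1} with trivial jump on $\bbR$; the remaining $\overline\partial$-problem has small data because the nonanalytic extension of $r$ decays. (2) From the resulting mixed RHP--$\overline\partial$ problem, extract the uniform bound $\|m_\pm\|_{L^\infty}\le M_\infty$, where $M_\infty$ depends only on $\rho$ and $\eta$ (through $\|r\|_{H^{1,1}}$ and $\|r\|_{L^\infty}$) and not on $x,t$; the factor $(1+\rho)$ that appears in \eqref{Kp} comes from $v^\pm_\theta$ and the $1-|r|^2$ entry. (3) Translate back: having $m_\pm$ in $L^\infty$ gives control of $\mu=m_-(v_\theta^-)^{-1}$ in $L^\infty$, and then for a general source $f\in L^p$ one writes the resolvent via the Beals--Coifman integral representation
$$(1-C_{v_\theta})^{-1}f = f + C^+\!\big((\,\cdot\,)w_\theta^-\big)\big[(1-C_{v_\theta})^{-1}f\big] + C^-\!\big((\,\cdot\,)w_\theta^+\big)\big[(1-C_{v_\theta})^{-1}f\big],$$
and estimates each Cauchy projection in $L^p$ by $\mathbf{c}_p$, each $w_\theta^\pm$ pointwise by $(1+\rho)$-type factors, and the already-known $\mu$-part by $M_\infty$, bootstrapping to the closed bound $\mathcal{K}_p(\rho,\eta)=(1+\rho)\big(\mathbf{c}_p(1+\rho)M_\infty^2+1\big)$. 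The quadratic dependence on $M_\infty$ reflects that both $m_-$ and $(v_\theta^-)^{-1}$ (hence two copies of the $\overline\partial$-bound) enter the representation.

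The hard part will be Step (2): producing a genuinely $t$-uniform $L^\infty$ bound on $m_\pm$ from the $\overline\partial$-problem. This requires that the local parametrix near $z_0$ (the parabolic-cylinder model) be solved with constants controlled purely by $\rho$ and $\eta$, that the $\overline\partial$-correction be shown to have operator norm $<1$ (so the associated integral equation is solvable) with a bound independent of $x,t$, and that the scaling $z\mapsto z_0+\zeta/\sqrt{t}$ used in the local analysis not introduce hidden $t$-growth — the decay of the nonanalytic extension of $r$ must beat the $\sqrt t$ stretching. Once this uniform $L^\infty$ bound is in hand, Steps (1) and (3) are the robust, essentially algebraic parts; the estimate is then stated with $\mathbf{c}_p$ the $L^p$-norm of the Cauchy projection, which is finite precisely for $2<p<\infty$, giving the claimed range of $p$ and completing the proof.
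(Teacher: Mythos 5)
Your Steps (1)--(2) are in the spirit of what the paper does (it derives the uniform bound $M_\infty$ from the $\overline\partial$-deformation for large $t$, and, importantly, complements it by a separate small-time Volterra/Zhou argument in Lemma \ref{lm:mbound2}, since the Neumann-series inversion of the $\overline\partial$ operator only has norm $t^{-1/4}C_S<1$ for $t$ large, not uniformly in $t$). The genuine gap is in your Step (3). The representation you propose,
\begin{equation*}
(1-C_{v_\theta})^{-1}f \;=\; f + C^+\bigl(h\,w_\theta^-\bigr)+C^-\bigl(h\,w_\theta^+\bigr),\qquad h=(1-C_{v_\theta})^{-1}f,
\end{equation*}
is just the equation $h=f+C_{v_\theta}h$ with the unknown $h$ on both sides. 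Estimating the right-hand side by $\mathbf{c}_p$, the pointwise bound on $w_\theta^\pm$, and $M_\infty$ gives $\|h\|_{L^p}\le\|f\|_{L^p}+c\,\mathbf{c}_p\rho\,\|h\|_{L^p}$, which closes only if $\mathbf{c}_p\rho<1$; that smallness is not assumed (and fails for $\rho$ near $1$ or $p$ large), so the ``bootstrap'' does not produce $\mathcal{K}_p$. Knowing $\mu=(1-C_{v_\theta})^{-1}I\in L^\infty$ does not help here either, since the resolvent must be controlled on an arbitrary $L^p$ source, not on $I$.

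The missing idea is an \emph{explicit, non-circular} formula for the resolvent in terms of the already-bounded RHP solution: the paper invokes solvability of $(1-C_{v_\theta})^{-1}$ on $L^p$ given $m_\pm\in L^\infty$ (via \cite[Proposition 4.5, Proposition 2.6]{DZ-3}) together with \cite[(7.107)]{Deift},
\begin{equation*}
(I-C_{v_\theta})^{-1}g=\Bigl(C_{+}\bigl(g(v_\theta-I)m_{+}^{-1}\bigr)\Bigr)m_{+}v_{+}^{-1}+g\,v_{+}^{-1},
\end{equation*}
in which the unknown no longer appears. One Cauchy projection gives $\mathbf{c}_p$, the factors $v_\theta-I$ and $v_+^{-1}$ give the $(1+\rho)$ factors, and $m_+$, $m_+^{-1}$ give the two copies of $M_\infty$ (so the quadratic dependence comes from $m_+$ and $m_+^{-1}$, not from $m_-$ and $(v_\theta^-)^{-1}$ as you suggest), yielding exactly $\mathcal{K}_p=(1+\rho)(\mathbf{c}_p(1+\rho)M_\infty^2+1)$. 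Without this identity (or an equivalent explicit inversion built from the Beals--Coifman solution), your argument does not reach the stated bound.
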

\begin{proof}
Assuming that $m_\pm \in L^\infty$, then by \cite[Proposition 4.5, Proposition 2.6]{DZ-3}, the operator $ \left(I-C_{v_\theta}\right)^{-1} $ exists on $L^p$, and we also recall the following fact from \cite[(7.107)]{Deift}:
    \begin{equation}
        \left(I-C_{v_\theta}\right)^{-1} g=\left(C_{+}\left(g(v_\theta-I) m_{+}^{-1}\right)\right) m_{+} v_{+}^{-1}+g v_{+}^{-1}.
    \end{equation}
    For any $g\in L^p(\bbR)$, from
    \begin{align*}
        \norm{ \left(I-C_{v_\theta}\right)^{-1} g}{L^p}&\leq \norm{\left(C_{+}\left(g(v_\theta-I) m_{+}^{-1}\right)\right) m_{+} v_{+}^{-1}}{L^p}+\norm{g v_{+}^{-1}}{L^p}\\
        &\leq \norm{\left(C_{+}\left(g(v_\theta-I) m_{+}^{-1}\right)\right) }{L^p}\norm{m_{+} v_{+}^{-1}}{L^\infty}+\norm{g v_{+}^{-1}}{L^p}\\
        &\leq \mathbf{c}_p \norm{g}{L^p}\norm{(v_\theta-I) m_{+}^{-1}}{L^\infty} \norm{m_{+} v_{+}^{-1}}{L^\infty}+\norm{g v_{+}^{-1}}{L^p},
    \end{align*}
    we deduce that
    \begin{align}
        \frac{\norm{ \left(I-C_{v_\theta}\right)^{-1} g}{L^p}}{\norm{g}{L^p}}&\leq \mathbf{c}_p \norm{(v_\theta-I) m_{+}^{-1}}{L^\infty} \norm{m_{+} v_{+}^{-1}}{L^\infty}+\norm{v_{+}^{-1}}{L^\infty}\\
        \nonumber
        &\leq \mathbf{c}_p \norm{v_\theta-I}{L^\infty}\norm{v_+}{L^\infty}\norm{m_+}{L^\infty}^2+\norm{v_{+}^{-1}}{L^\infty}\\
        \nonumber
        &\leq (1+\rho)\left(\mathbf{c}_p\norm{m_+}{L^\infty}^2\left(1+\rho\right)+1\right).
    \end{align}
    The conclusion follows if we replace $\norm{m_+}{L^\infty}$ by 
    \begin{equation}
    \label{Minfty}
        M_{\infty}(\rho,\lambda, \eta)=\text{max}\lbrace  C_{\tau m}(\eta), C_m(\rho, \lambda)+1\rbrace
    \end{equation}
   where $C_m(\rho, \lambda)$ and $C_{\tau m}(\eta)$ are given in Proposition \ref{prop:mbound1} and Lemma \ref{lm:mbound2} respectively.
\end{proof}
Proposition \ref{prop:priori} above is used in \cite{DZ-2} to deduce various estimates necessary for the proof of \cite[Propsition 2.47]{DZ-2}. In the proof of Proposition \ref{prop:new apriori} we will make use of the $\overline{\partial}$-method as is given by \cite{DMM} to give a simplified proof of Proposition \ref{prop:priori}. Recall that
in \cite{DMM}, a series of transformations are made to the solution $m(z; x, t)$ to Problem \ref{prob:NLS.RH0}
\begin{equation}
\label{m:trans}
    m(z)=E(z)P(\sqrt{8t}(z-z_0))\delta^{\sigma_3}
\end{equation}
where $E$ satisfies the following $\dbar$-integral equation:
\begin{equation}
\label{eq:m3}
    E(z)=I+\frac{1}{\pi} \int_{\mathbb{C}} \frac{\bar{\partial} E(s)}{s-z} d A(s)=I+\frac{1}{\pi} \int_{\mathbb{C}} \frac{E(s) W(s)}{s-z} d A(s).
\end{equation}
Equation \eqref{eq:m3} can be rewritten as 
\begin{equation}
    (I-S)\left[E(z)\right]=I
\end{equation}
where
\begin{equation}
\label{op:S}
    S[f]=\frac{1}{\pi} \int_{\mathbb{C}} \frac{f(s) W(s)}{s-z} d A(s)
\end{equation}
and $W$ is given on \cite[P.10]{DMM18}. Notice that for $t>0$
\begin{align}
   \norm{ S(I)}{L^{\infty} \rightarrow L^{\infty}}  & \leq \left\|\delta^{-2}\right\|_{L^{\infty}\left(\Omega_1\right)} \iint_{\Omega_1} \frac{\left|\bar{\partial} R_1 e^{2 i t \theta}\right|}{|s-z|} d A(s) \\
    \nonumber
    &\leq \left\|\delta^{-2}\right\|_{L^{\infty}\left(\Omega_1\right)}\norm{P}{L^\infty}^2
    \left( \iint_{\Omega_1} \frac{\left|r^{\prime}\right| e^{-t w w}}{|s-z|} d A(s) + \iint_{\Omega_1} \frac{\left|s-z_0\right|^{-1 / 2} e^{-t u v}}{|s-z|} d A(s) \right)\\
    \nonumber
    &\leq t^{-1/4}\left\|\delta^{-2}\right\|_{L^{\infty}\left(\Omega_1\right)}\norm{P}{L^\infty}^2
    \left( c\norm{r}{H^1} +c'\right)\\
    \nonumber
    &\leq  t^{-1/4}\frac{c\rho^2}{(1-\rho)^4}(1+\lambda)\\
    \nonumber
    &=t^{-1/4}C_S(\rho, \lambda)
\end{align}
where $\lambda:=\norm{r}{H^1}$ and 
\begin{equation}
    \delta(z)=e^{C_{\mathbb{R}_{-}+z_0} \log \left(1-|r|^2\right)}=e^{\frac{1}{2 \pi i} \int_{-\infty}^{z_0} \frac{\log \left(1-|r(s)|^2\right)}{s-z} d s}.
\end{equation}
So for sufficiently large $t$, one can  invert the operator \eqref{op:S} by the Neumann series.

\begin{proposition}
    Given $r(z)\in H^{1}$ with $\|r\|_{H^{1}} \leqslant \lambda,\|r\|_{L^{\infty}} \leqslant \rho<1$ and $t^{1/4}>\sqrt{2}C_S$ , then
    \begin{equation}
     \left\vert  E(z)-I\right\vert\leq \frac{C_S}{t^{1/4}-C_S} .
    \end{equation}
    As a consequence, 
    \begin{equation}
    \label{bd:m3}
         \left\vert  E(z)\right\vert \leq \frac{t^{1/4}}{t^{1/4}-C_S}.
          \end{equation}
\end{proposition}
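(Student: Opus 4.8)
The plan is to solve the $\dbar$-integral equation $(I-S)[E]=I$ by a Neumann series and to read the two bounds off the resulting geometric series. The only input required is the operator estimate established in the lines preceding the statement: for every $t>0$ one has $\norm{S}{L^\infty\to L^\infty}\le t^{-1/4}C_S(\rho,\lambda)$, and the same computation applied to the constant matrix $I$ gives $\norm{S[I]}{L^\infty}\le t^{-1/4}C_S$ (the weight $W$ does not depend on the function being transformed, so the estimate is insensitive to this). Under the hypothesis $t^{1/4}>\sqrt{2}\,C_S$ we have $t^{-1/4}C_S<1/\sqrt2<1$, so I would invert $(I-S)$ on $L^\infty$ by $E=\sum_{n\ge 0}S^n[I]=I+\sum_{n\ge 1}S^n[I]$, the series converging absolutely and uniformly. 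In particular $E\in I+L^\infty$; by uniqueness of solutions to the $\dbar$-problem this is the function appearing in \eqref{m:trans}, so no ambiguity arises from working with the series.

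For the first bound I would estimate the tail directly. Writing $S^n[I]=S^{n-1}\bigl[S[I]\bigr]$ for $n\ge 1$,
\[
\norm{E-I}{L^\infty}\le\sum_{n\ge 1}\norm{S^{n-1}\bigl[S[I]\bigr]}{L^\infty}\le\norm{S[I]}{L^\infty}\sum_{n\ge 1}\norm{S}{L^\infty\to L^\infty}^{\,n-1}\le\frac{t^{-1/4}C_S}{1-t^{-1/4}C_S}=\frac{C_S}{t^{1/4}-C_S},
\]
where the denominator is positive and the bound meaningful precisely because $t^{1/4}>\sqrt2\,C_S>C_S$ (the factor $\sqrt2$ is just a convenient safety margin). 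The ``consequence'' is then immediate from the triangle inequality: $|E(z)|\le 1+|E(z)-I|\le 1+\dfrac{C_S}{t^{1/4}-C_S}=\dfrac{t^{1/4}}{t^{1/4}-C_S}$, as claimed.

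The argument is essentially bookkeeping once the operator bound is in hand, and I do not expect a genuine obstacle; the two points meriting a word of care are (i) that $S$ really maps $L^\infty$ to $L^\infty$ with norm $\mathcal{O}(t^{-1/4})$ — the displayed computation isolates the contribution of the sector $\Omega_1$, and one checks that $\Omega_2,\dots,\Omega_6$ are handled identically, the combined constant being absorbed into $C_S$ — and (ii) that the Neumann series produces exactly the $E$ of \eqref{m:trans}, which follows from the uniqueness for the $\dbar$-equation noted above. The genuinely hard estimate, namely controlling $\norm{\dbar R_1\,e^{2it\theta}}{}$ and the ensuing Cauchy-type integrals by $c\,t^{-1/4}$, has already been carried out before the statement, so the remaining work is only to assemble the geometric series.
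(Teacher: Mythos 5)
Your argument is correct and coincides with the paper's (implicit) proof: the proposition is exactly the Neumann-series inversion of $(I-S)$ using the bound $\norm{S}{L^\infty\to L^\infty}\le t^{-1/4}C_S$ established just before the statement, with the geometric-series tail giving $|E-I|\le C_S/(t^{1/4}-C_S)$ and the triangle inequality giving the bound on $|E|$. No gaps; your remarks on the other sectors $\Omega_2,\dots,\Omega_6$ and on identifying the series solution with the $E$ of \eqref{m:trans} are the same routine checks the paper leaves to the reader.
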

We then obtain the following result that plays a key role in deriving the necessary \textit{a priori} estimates given by \cite{DZ-2}.
\begin{proposition}
\label{prop:mbound1}
  Given $r(z)\in H^{1}$ with $\|r\|_{H^1} \leqslant \lambda, \|r\|_{L^{\infty}} \leqslant \rho<1$ and $t>4C_s^4>0$
    \begin{align}
        \norm{m_\pm(z)-I}{L^\infty}&\leq   \frac{c\rho t^{1/4}}{(1-\rho)^2(t^{1/4}-C_S)}=C_m(\rho, \lambda), \\
        \norm{\mu(z)-I}{L^\infty}&\leq  \frac{c \rho t^{1/4}}{(1-\rho)^2(t^{1/4}-C_S)}=C_\mu(\rho, \lambda).
        \end{align}
\end{proposition}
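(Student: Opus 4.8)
The plan is to read $m_\pm$ and $\mu$ off the solution $E$ of the $\dbar$-equation \eqref{eq:m3} by undoing the chain of transformations \eqref{m:trans} of \cite{DMM} and controlling each factor separately in $L^\infty$. First I would record the $t$- and $x$-uniform bounds
\[
\norm{\delta^{\pm\sigma_3}}{L^\infty(\bbC)}\le\frac{c}{1-\rho},\qquad \norm{P}{L^\infty}+\norm{P^{-1}}{L^\infty}\le\frac{c}{1-\rho},
\]
the first straight from the explicit contour-integral formula for $\delta$ together with $\int_\bbR|\imag z|\,|s-z|^{-2}\,ds=\pi$ and $|\log(1-|r|^2)|\le|\log(1-\rho^2)|$, and the second being the standard estimate for the parabolic cylinder parametrix recorded in \cite{DMM}. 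This is the only step where the hypothesis $\norm{r}{L^\infty}<1$ is used, through $\nu(z_0)=-\tfrac1{2\pi}\log(1-|r(z_0)|^2)$; in the focussing setting one simply replaces this input, which is what makes the method robust.

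Next, on each sector of $\bbC\setminus\bbR$ the relation \eqref{m:trans} together with the piecewise triangular $\dbar$-extension matrix of \cite{DMM} (equal to $I$ away from the lens regions, with entries bounded by $c\rho/(1-\rho)$ on them) exhibits $m$ as a product of $E$, $P(\sqrt{8t}(z-z_0))$, $\delta^{\sigma_3}$ and at most one such triangular factor. Combining $|E(z)|\le t^{1/4}/(t^{1/4}-C_S)$ and $|E(z)-I|\le C_S/(t^{1/4}-C_S)$ from the preceding proposition with the bounds above and passing to the boundary values on $\bbR$ (where $|e^{\pm i\theta}|=1$), I get $\norm{m_\pm}{L^\infty}\le \frac{c\,t^{1/4}}{(1-\rho)^2(t^{1/4}-C_S)}$; since this exceeds $1$, it yields $\norm{m_\pm-I}{L^\infty}\le C_m(\rho,\lambda)$. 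The extra factor $\rho$ in $C_m$ is recovered by comparing with the trivial case $r\equiv0$ — in which $E,P,\delta$ and the triangular factor all equal $I$, so $m\equiv I$ — and tracking the (linear to leading order) dependence of each factor on $r$. Then, since $m_\pm=\mu\,v_\theta^\pm$ and $v_\theta^\pm-I$ has a single nonzero entry, of modulus $\le\rho$ on $\bbR$, writing $\mu-I=(m_\pm-I)(v_\theta^\pm)^{-1}+\bigl((v_\theta^\pm)^{-1}-I\bigr)$ gives the bound for $\norm{\mu-I}{L^\infty}$ with the same constant.

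The main obstacle is the first step: extracting from the explicit parabolic cylinder model $P$ an $L^\infty$ bound that is uniform in $t$ and has the correct dependence on $\rho$, since $P$ is built from $\Gamma$-functions of $i\nu(z_0)$ and connection coefficients that could a priori degenerate as $\rho\to1$, so one must check that no worse power of $(1-\rho)^{-1}$ appears than in $C_m$. A related subtlety, to be handled honestly rather than glossed over, is that $m_\pm(z;x,t)$ does \emph{not} tend to $I$ as $t\to\infty$: the product $P(\sqrt{8t}(z-z_0))\,\delta(z)^{\sigma_3}$ carries a residual oscillation $(\sqrt{8t})^{-i\nu(z_0)\sigma_3}$ — precisely the factor $e^{-i\nu\log 2t}$ of Theorem \ref{main} — but this has modulus $1$ and so does not spoil the estimate; what is claimed, and all that is needed for Proposition \ref{prop:new apriori}, is $t$-uniform boundedness. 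A minor bookkeeping point is to check that the sector-dependent triangular factor is uniformly bounded where it differs from $I$, which costs only the $\rho/(1-\rho)$ already accounted for.
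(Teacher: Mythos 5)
Your proposal is correct and takes essentially the same route as the paper: the paper's proof is the one-line observation that the claim follows from \eqref{bd:m3} together with the $L^\infty$ bounds on $P$ and $\delta^{\sigma_3}$, i.e.\ unwinding the factorization \eqref{m:trans} and bounding each factor, exactly as you do. Your extra care with the triangular lens factor and with recovering $\mu$ from $m_\pm=\mu v_\theta^\pm$ just makes explicit what the paper's terse proof leaves implicit.
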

\begin{proof}
    A consequence of \eqref{bd:m3} and the $L^\infty$-bound of $P$ and $\delta^{\sigma_3}$.
\end{proof}
\begin{lemma}
\label{lm:ED}
    Given $r(z)\in H^{1}$ with $\|r\|_{H^{1}} \leqslant \lambda,\|r\|_{L^{\infty}} \leqslant \rho<1$ and $t>16C_S^4>0$
    \begin{align}
        \norm{\Delta E(z)}{L^\infty}&\leq C_{\delta E}(\rho, \lambda)\norm{\Delta r}{H^{1,1}_1}
    \end{align}
\end{lemma}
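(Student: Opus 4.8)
The plan is to reduce the claimed bound on $\Delta E$ to a difference estimate on the $\dbar$-operator $S$ via the resolvent identity, and then to track how the amplitude $W$ depends on $r$. Write $S_i$ for the operator \eqref{op:S} built from $r_i$ and $E_i=(I-S_i)^{-1}I$ for the corresponding solution of \eqref{eq:m3}, $i=1,2$. The computation preceding \eqref{bd:m3} shows $\norm{S_i}{L^\infty\to L^\infty}\le t^{-1/4}C_S(\rho,\lambda)$, so on the range $t>16C_S^4$ (equivalently $t^{1/4}>2C_S$) the Neumann series converges with $\norm{(I-S_i)^{-1}}{L^\infty\to L^\infty}\le 2$, and \eqref{bd:m3} gives $\norm{E_i}{L^\infty}\le 2$. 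Applying the resolvent identity $(I-S_2)^{-1}-(I-S_1)^{-1}=(I-S_2)^{-1}(S_2-S_1)(I-S_1)^{-1}$ to $I$ yields
\[
\Delta E=(I-S_2)^{-1}\bigl[(S_2-S_1)E_1\bigr],
\]
hence
\[
\norm{\Delta E}{L^\infty}\le 2\,\norm{(S_2-S_1)E_1}{L^\infty}\le\frac{4}{\pi}\sup_{z\in\bbC}\int_{\bbC}\frac{|\Delta W(s)|}{|s-z|}\,dA(s),
\]
where $\Delta W=W_2-W_1$. So everything is reduced to bounding this $\dbar$-mass of $\Delta W$ by $C_{\delta E}(\rho,\lambda)\norm{\Delta r}{H^{1,1}_1}$.

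Next I would unwind the $r$-dependence of $W$. On each sector $\Omega_j$, $W$ is a product of a conjugation factor built from $\delta^{\pm2}$, the fixed phase $e^{2it\theta}$ (which does not depend on $r$), and $\dbar R_j$, where $R_j$ is the $\dbar$-extension of $r$, $\bar r$, $r/(1-|r|^2)$ or $\bar r/(1-|r|^2)$ constructed in \cite{DMM}. By the product rule for the difference operator, $\Delta W$ is a finite sum of terms in which exactly one factor is replaced by $\Delta(\delta^{\pm2})$ or $\Delta(\dbar R_j)$ and the remaining factors are bounded exactly as in the $C_S$ computation. For the $\delta$-factors one uses $\norm{\delta^{\pm2}}{L^\infty}\lesssim(1-\rho)^{-2}$ together with $\norm{\Delta(\delta^{\pm2})}{L^\infty}\lesssim(1-\rho)^{-c}\norm{\Delta r}{H^{1,1}_1}$: writing $\delta^{\pm2}=\exp\bigl(\pm\tfrac{1}{\pi i}C_{\bbR_-+z_0}\log(1-|r|^2)\bigr)$, the pointwise bound $|\Delta\log(1-|r|^2)|\lesssim(1-\rho^2)^{-1}(|r_1|+|r_2|)\,|\Delta r|$ together with its derivative analogue and the $L^2$/derivative mapping properties of the Cauchy transform $C_{\bbR_-+z_0}$ used in \cite{DMM} control the difference of the exponents in $L^\infty$, and one then invokes the uniform bound on $|\delta^{\pm2}|$ and the mean value theorem. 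For the extension factors one uses the pointwise estimates $|\dbar R_j(s)|\lesssim|r'(\real s)|+|s-z_0|^{-1/2}$ of \cite{DMM} and the parallel difference bound $|\Delta(\dbar R_j)(s)|\lesssim(1-\rho)^{-c}\bigl(|\Delta r'(\real s)|+|s-z_0|^{-1/2}\,\norm{\Delta r}{L^\infty}\bigr)$, valid because $R_j$ depends on $r$ linearly up to the analytic map $u\mapsto u/(1-|u|^2)$ and the fixed cutoff functions.

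Finally I would feed these into the $\dbar$-mass integral and rerun the estimates from the block of inequalities preceding \eqref{bd:m3}: the terms carrying $|\Delta r'(\real s)|$ contribute $\lesssim t^{-1/4}\norm{\Delta r'}{L^2}$, the terms carrying $|s-z_0|^{-1/2}$ contribute $\lesssim t^{-1/4}\norm{\Delta r}{L^\infty}$, and the $\Delta\delta$ terms contribute $\lesssim t^{-1/4}\norm{\Delta r}{H^{1,1}_1}$, all with prefactors of the form $c(1+\lambda)(1-\rho)^{-c}$. Since $t^{-1/4}<\tfrac{1}{2C_S}$ on the range $t>16C_S^4$ and $\norm{\Delta r'}{L^2}+\norm{\Delta r}{L^\infty}\lesssim\norm{\Delta r}{H^{1,1}_1}$, this yields the claim with a constant $C_{\delta E}(\rho,\lambda)$ of the stated type. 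The main obstacle is the second step — the bookkeeping of $\Delta W$: producing the difference estimates for $\dbar R_j$ and $\delta^{\pm2}$ with the correct powers of $(1-\rho)^{-1}$ and in norms of $\Delta r$ dominated by $\norm{\Delta r}{H^{1,1}_1}$, and in particular handling the local $|s-z_0|^{-1/2}$ singularity at the stationary point, which becomes harmless only after integrating against $dA(s)/|s-z|$ together with the Gaussian decay furnished by $e^{2it\theta}$.
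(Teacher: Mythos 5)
Your skeleton --- the second resolvent identity $\Delta E=(I-S_2)^{-1}(S_2-S_1)(I-S_1)^{-1}I$, the Neumann-series bound on $(I-S_i)^{-1}$ for $t^{1/4}>2C_S$, and the product-rule splitting of $\Delta W$ into a term carrying $\dbar\Delta R_1$ and a term carrying $\Delta\delta^{-2}$ --- is exactly the paper's strategy. The genuine gap is the key input you assert for the $\delta$-factor: the uniform bound $\norm{\Delta(\delta^{\pm 2})}{L^\infty}\lesssim (1-\rho)^{-c}\norm{\Delta r}{H^{1,1}_1}$ is false. Writing $\delta_i\sim (z-z_0)^{i\nu_i}e^{\beta_i}$, the difference of the two oscillatory factors is of size $|\Delta\nu|\,\bigl|\log|z-z_0|\bigr|$ near the stationary point, so any estimate proportional to a norm of $\Delta r$ necessarily carries a logarithmic weight; this is precisely \eqref{est:Ddelta}, where the factor $\left\vert 1+(1+z)\ln\left\vert z/(1+z)\right\vert\right\vert$ blows up at $z_0$. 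Consequently you cannot pull $\Delta\delta^{-2}$ out of the $dA(s)/|s-z|$ integral and ``rerun the $C_S$ computation'': the corresponding term ($P_2$ in \eqref{P2}) needs a separate estimate of integrals with the weight $\log|s|/\sqrt{|s|}$ against $e^{-tv^2}|s-z|^{-1}$ --- the $\mathcal{I}_1,\mathcal{I}_2$ computations that occupy most of the paper's proof --- and these give $\log t\, t^{-1/4}+t^{-1/2-1/2q}$ rather than your claimed clean $t^{-1/4}$ (still bounded on $t>16C_S^4$, so the lemma survives, but this is the heart of the argument and it is absent from your proposal).

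The same issue infects your pointwise difference bound for $\dbar R_1$. The extension $R_1$ interpolates between $r$ and $f_1=\hat r_0 (z-z_0)^{-2i\nu}\delta^2$, so its dependence on $r$ is nonlocal (through $\delta^2$, $\nu$ and $\hat r_0$) and again produces $\log(z-z_0)$ factors when differenced; ``linear up to the analytic map $u\mapsto u/(1-|u|^2)$ and fixed cutoffs'' does not account for this boundary datum. The paper controls $\Delta\bigl(r(z_0)-f_1\bigr)$ by introducing $\beta(z,z_0)$ and $g$ and proving H\"older-$1/2$ bounds such as $|g|\lesssim \bigl(1+\lambda/(1-\rho)\bigr)|z-z_0|^{1/2}$ and $|\Delta(\beta(z,z_0)-\beta(z_0,z_0))|\lesssim (1-\rho)^{-1}\norm{\Delta r}{H^{1}}|z-z_0|^{1/2}$, which after division by $|z-z_0|$ yield \eqref{est:dR}; the estimate you wrote down is essentially \eqref{est:dR}, but the justification you offer for it would not go through as stated. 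You would need to supply this intermediate layer (the $\beta$, $g$, and $\Delta\delta^{\pm2}$ machinery) to make the second and third steps of your plan rigorous.
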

\begin{proof}
By the second resolvent identity, 
    \begin{equation}
   \Delta E=\Delta\left({I-S}\right)^{-1}I=(I+S_1+S_1^2+\cdots)(S_2-S_1)(I+S_2+S_2^2+\cdots)
\end{equation}
where for $i=1,2$
\begin{equation}
S_i[f]=\frac{1}{\pi} \int_{\mathbb{C}} \frac{f W_i}{s-z} d A(s).
\end{equation}

Notice that

\begin{equation}
W_i=\bar{\partial} R_{1i} \delta_i^{-2}e^{2it\theta}
\end{equation}

\begin{equation}
\bar{\partial} R_{1i}=(r_i-f_{1i})\bar{\partial}\cos(2\arg z)+\frac{1}{2}\cos(2\arg z)r_i'
\end{equation}

where
\begin{equation}
f_{1i}=\hat{r}_{i0}(z-z_0)^{-2i\nu_i}\delta_i^2
\end{equation}

We set a function $\beta$:

\begin{equation}
\beta(z,z_0)=\int_{-\infty}^{z_0}\frac{1}{2\pi i (s-z)}\left\{ \log(1-|r(s)|^2)-\log(1-|r(z_0)|^2)\chi^0(s-z_0+1) \right\}ds
\end{equation}

\begin{equation}
g=i\nu \left((z-z_0)\ln(z-z_0)-(z-z_0+1)\ln(z-z_0+1)\right)+(\beta(z,z_0)-\beta(z_0,z_0))
\end{equation}

We define the parameter $\hat{r}_0$ in the following way

\begin{equation}
\hat{r}_0e^{2iv+2\beta(z_0,z_0)}=r(z_0)
\end{equation}

Thus{\small
\begin{equation}
r(z_0)-f_1=r(z_0)-r(z_0)\exp{\left[2iv((z-z_0)\ln(z-z_0)-(z-z_0+1)\ln(z-z_0+1))+2(\beta(z,z_0)-\beta(z_0,z_0))\right]}.
\end{equation}}
We first show that 
\begin{align}
    \|\beta(\cdot,z_0)\|_{L^{\infty}(R_{z_0})} &\leq c\frac{\|r\|_{H^{1}}}{1-\rho},\\
\|\Delta\beta(\cdot,z_0)\|_{L^{\infty}(R_{z_0})} &\leq c\frac{\|r_2-r_1\|_{H^{1}}}{1-\rho},\\
\|\beta(z,z_0)-\beta(z_0,z_0)\|_{L^{\infty}(R_{z_0})} &\leq c\frac{\|r\|_{H^{1}}}{1-\rho}(z-z_0)^{1/2},\\
\|\Delta(\beta(z,z_0)-\beta(z_0,z_0))\|_{L^{\infty}(R_{z_0})} & \leq c\frac{\|r_2-r_1\|_{H^{1}}}{1-\rho}(z-z_0)^{1/2}.
\end{align}
And before doing so, we recall the following useful properties from \cite{DZ-2}:
\begin{equation}
|\log(1-|r(z)|^2)| \leq \frac{|r(z)|^2}{1-|r(z)|^2}.
\end{equation}

\begin{equation}
\| \log(1-|r(z)|^2)\|_{L^2} \leq \frac{\|r\|_{L^\infty}\|r\|_{L^2}}{1-\|r\|_{L^\infty}}.
\end{equation}

\begin{equation}
\left\vert \log \left( \frac{1-|r_2(z)|^2}{1-|r_1(z)|^2}\right)\right\vert \leq \frac{2\rho}{1-\rho}|r_2(z)-r_1(z)|.
\end{equation}

\begin{equation}
   \left \Vert \log \frac{1-\left\vert r_2(z)\right\vert^2}{1-\left\vert r_1(z)\right\vert^2}-\log \frac{1-\left\vert r_2(0)\right\vert^2}{1-\left\vert r_1(0)\right\vert^2} \chi \right \Vert_{L^2} \leq \frac{c\rho}{1-\rho}\| r_2 - r_1\|_{H^{1}}.
\end{equation}

\begin{align}
&\left\Vert \frac{d}{ds}\left\{ \log (1-|r(s)|^{2})-\log (1-|r(z_{0})|^{2})\chi
^{0}(s-z_{0}+1)\right\} \chi _{(-\infty ,z_{0})}\right\Vert _{L_{R_{z_{0}}}^{2}} \\
\nonumber
&\leq \Vert \frac{r^{\prime }\bar{r}+r\bar{r}^{\prime }}{1-\left\vert
r\left( s\right) \right\vert ^{2}}-\log (1-|r(z_{0})|^{2})\chi ^{0}\Vert
_{L_{R_{z_{0}}}^{2}} \\
\nonumber
&\leq \frac{1}{1-\rho }\left\Vert r\right\Vert
_{L_{R_{z_{0}}}^{2}}\left\Vert r^{\prime }\right\Vert _{L_{R_{z_{0}}}^{2}}+%
\frac{c}{1-\rho }|r(z_{0})| \leq \frac{c}{1-\rho} \|r\|_{H^{1}}
\end{align}

\begin{align}
&\left\Vert \frac{d}{ds}\left\{ \frac{\log (1-|r_{2}(s)|^{2})}{\log
(1-|r_{1}(s)|^{2})}-\frac{\log (1-|r_{2}(z_{0})|^{2})}{\log
(1-|r_{1}(z_{0})|^{2})}\chi ^{0}(s-z_{0}+1)\right\} \chi _{(-\infty
,z_{0})}\right\Vert _{L_{R_{z_{0}}}^{2}} \\
\nonumber
&\leq \frac{1}{1-\rho }\left\Vert r_{2}^{\prime }\bar{r}_{2}+r_{2}\bar{r}%
_{2}^{\prime }-r_{1}^{\prime }\bar{r}_{1}-r_{1}\bar{r}_{1}^{\prime
}\right\Vert _{L_{R_{z_{0}}}^{2}}+\frac{c}{1-\rho }\left\vert r_{2}\left( z_{0}\right)
-r_{1}\left( z_{0}\right) \right\vert \\
\nonumber
&\leq \frac{c}{1-\rho }\left\Vert r_{2}-r_{1}\right\Vert _{H^{1}}.
\end{align}
\begin{equation}
\label{est:Ddelta}
|\Delta \delta^{-2}| \leq c\left ( \frac{\lambda}{(1-\rho^2)}+\frac{\rho}{1-\rho}\left | 1+(1+z)\ln \left | \frac{z}{1+z}\ \right |  \right |  \right )\left \| r_2-r_1 \right \|_{H^{1}} 
\end{equation}
According to the Lemma 23.3 in \cite{BDT}, we can deduce that

\begin{equation}
|\beta(z,z_0)| \leq \left\Vert \left\{ \log(1-|r(s)|^2)-\log(1-|r(z_0)|^2)\chi^0(s-z_0+1) \right\} \chi_{(-\infty,z_0)}\right\Vert_{H^{1}}
\end{equation} 

\begin{equation}
|\beta(z,z_0)-\beta(z_0,z_0)| \leq \sqrt{2} \left\Vert \frac{d}{ds}\left\{ \log(1-|r(s)|^2)-\log(1-|r(z_0)|^2)\chi^0(s-z_0+1) \right\} \chi_{(-\infty,z_0)}\right\Vert_{L_{R_{z_0}}^2}|z-z_0|^{1/2}
\end{equation}

Thus we deduce that

\begin{equation}
\|\beta(\cdot,z_0)\|_{L^{\infty}(R_{z_0})} \leq c\frac{\|r\|_{H^{1}}}{1-\rho}.
\end{equation}

For $i=1,2$, notice that

\begin{equation}
r_i(z_0)-f_{1i}=r_i(z_0)-r_i(z_0)\exp{\left[2i\nu((z-z_0)\ln(z-z_0)-(z-z_0+1)\ln(z-z_0+1))+2(\beta_i(z,z_0)-\beta_i(z_0,z_0))\right]}
\end{equation}

and set
$$g_i=i\nu((z-z_0)\ln(z-z_0)-(z-z_0+1)\ln(z-z_0+1))+(\beta_i(z,z_0)-\beta_i(z_0,z_0)).$$

Then we obtain
\begin{eqnarray*}
|\Delta(r(z_0)-f_1)| &\leq& |\Delta r(z_0)(1-e^{2g_i})+r(z_0) e^{2g_i} \Delta (\beta(z,z_0)-\beta(z_0,z_0)))| \\
&\leq& |\Delta r(z_0)||e^{2g_i}-1|+|r(z_0)||e^{2g_i}||\Delta (\beta(z,z_0)-\beta(z_0,z_0))|\\
&\leq& |\Delta r(z_0)||2g|+|r(z_0)||2g||\Delta (\beta(z,z_0)-\beta(z_0,z_0))|+|r(z_0)||\Delta (\beta(z,z_0)-\beta(z_0,z_0))|.
\end{eqnarray*}

We also observe that 

\begin{equation}
|g|\leq c |z-z_0|^{1/2} + \frac{c}{1-\rho} \|r\|_{H^{1}} |z-z_0|^{1/2},
\end{equation}
and 
\begin{equation}
|\Delta (\beta(z,z_0)-\beta(z_0,z_0))| \leq \frac{c}{1-\rho} \|r_1-r_2\|_{H^{1}} |z-z_0|^{1/2}.
\end{equation}
Thus
\begin{equation}
|\Delta(r(z_0)-f_1)|\leq c |\Delta r(z_0)||z-z_0|^{1/2} + c \|r_1-r_2\|_{H^{1}} |z-z_0|+ c \|r_1-r_2\|_{H^{1}} |z-z_0|^{1/2}.
\end{equation}

Then we can get the following estimate:
\begin{eqnarray}
\label{est:dR}
|\bar{\partial} \Delta R_1| &\leq& \frac{c_1}{|z-z_0|}\left( |\Delta(r-r(z_0))|+|\Delta(r(z_0)-f_1)| \right)+c_2|r_1'-r_2'|\\
\nonumber
&\leq& C_1 \| r_2-r_1 \|_{H^{1}}|z-z_0|^{-1/2}+C_2 \| r_2-r_1 \|_{H^{1}}+C_3 |r_2'-r_1'|.
\end{eqnarray}

Since
\begin{equation}
\Delta W=\bar{\partial} \Delta R_1 \delta^{-2}e^{2it\theta}+ \bar{\partial} R_1 \Delta \delta^{-2}e^{2it\theta}
\end{equation}

we then calculate $\Delta S(f)$:

\begin{equation}
|\Delta S(f)| \leq c(P_1+P_2)
\end{equation}
and will show that
\begin{eqnarray}
\label{P_1}
P_1 &=& \left \vert \int_{\mathbb{C}} \frac{f \bar{\partial} \Delta R_1 \delta^{-2}e^{2it\theta}}{s-z} d A(s) \right \vert \\
\nonumber
&\leq& C\|f\|_{L^\infty}\|\delta^{-2}\|_{L^{\infty}}\frac{\|\Delta r\|_{H^{1,1}}}{t^{1/4}},
\end{eqnarray}

\begin{eqnarray}
\label{P2}
P_2 &=& \left \vert \int_{\mathbb{C}} \frac{f \bar{\partial} R_1 \Delta \delta^{-2}e^{2it\theta}}{s-z} d A(s) \right \vert\\
\nonumber
& \leq & C\|f\|_{L^\infty}\norm{r'}{L^2}\left \| r_2-r_1 \right \|_{H^{1}}\left(\frac{\log t}{t^{1/4}}+ t^{-1/2-1/2q}\right).
\end{eqnarray}
The estimate of \eqref{P_1} is equivalent to that of \eqref{op:S}. We will only focus on \eqref{P2} and integrate on $\Omega_1$  in Figure \ref{fig 1}.
{
\SixMatrix{Six regions}{fig R-2+}
	{\twomat{1}{0}{R_1 e^{2i\theta}\delta^{-2}}{1}}
	{\twomat{1}{-R_3 e^{-2i\theta}\delta^2}{0}{1}}
	{\twomat{1}{0}{R_4 e^{2i\theta}\delta^{-2}}{1}}
	{\twomat{1}{-R_6 e^{-2i\theta}\delta^2}{0}{1}}
}

Recall \eqref{est:Ddelta}. For brevity we set $z_0=0$ and only show the following calculation. For $|z|<1$,
\begin{align}
    \left\|\frac{\log |s| }{\sqrt{|s|}}\right\|_{L^p(v, +\infty)}&\leq \left(\int_{v}^{+\infty}\frac{|\log v|^p}{\left(u^2+v^2\right)^{p / 4}} d u\right)^{1 / p} \\
    \nonumber
    &\leq v^{1 / p-1 / 2}\left(\int_1^{\infty} \frac{|\log v|^p}{\left(1+w^2\right)^{p / 4}} d w\right)^{1 / p}\\
    \nonumber
    &\leq C v^{1 / p-1 / 2}|\log v|.
\end{align}
Also recall that 
\begin{align*}
    \left\|\frac{1}{|s-z|}\right\|_{L^q(v, +\infty)}&=\left(\int_{v}^{+\infty} \frac{1}{\left((u-\alpha)^2+(v-\beta)^2\right)^{q / 2}} d u\right)^{1 / q}\\
    &\leq\left(\int_{\mathbb{R}} \frac{1}{\left(s^2+(v-\beta)^2\right)^{q / 2}} d s\right)^{1 / 2}\\
    &\leq  c|v-\beta|^{1 / q-1} .
\end{align*}
We now combine the above estimates to see that
\begin{align*}
   & \int_0^{\infty} e^{-t v^2}\left\|\frac{\log s}{\sqrt{|s|}}\right\|_{L^p(v, +\infty)}\left\|\frac{1}{|s-z|}\right\|_{L^q(v, +\infty)} d v\\
    &\quad \leq  \int_0^{\infty} e^{-t v^2} v^{1 / p-1 / 2}|\log v||v-\beta|^{1 / q-1} d v\\
&= \int_0^\beta e^{-t v^2}|\log( v)| v^{1 / p-1 / 2}(\beta-v)^{1 / q-1} d v\\
    &\quad +\int_\beta^{\infty} e^{-t v^2}|\log( v)| v^{1 / p-1 / 2}(v-\beta)^{1 / q-1} d v\\
    &=\mathcal{I}_1+\mathcal{I}_2.
\end{align*}
For $\beta<1$, 
\begin{align*}
    \mathcal{I}_1&\leq \int_0^1 \sqrt{\beta} e^{-t \beta^2 w^2} |\log(w)| w^{1 / p-1 / 2}(1-w)^{1 / q-1} d w+\int_0^1 \sqrt{\beta} e^{-t \beta^2 w^2} |\log\beta| w^{1 / p-1 / 2}(1-w)^{1 / q-1} d w\\
    & \leq  c t^{-1 / 4} \left( \int_0^1 |\log(w)| w^{1 / p-1}(1-w)^{1 / q-1} d w \right)+ c t^{-1 / 8}\left(\beta^{1/4}\log\beta \int_0^1 w^{1 / p-3/4}(1-w)^{1 / q-1} d w \right)\\
    &\leq C t^{-1/8}.
\end{align*}
For $\beta>1$, we make use of the fact that $\log\beta<\beta^{1/8}$ to deduce that 
\begin{align*}
     \mathcal{I}_1& \leq ct^{-1/4}+\int_0^1 {\beta}^{5/8} e^{-t \beta^2 w^2} w^{1 / p-1 / 2}(1-w)^{1 / q-1} d w\\
     &\leq ct^{-1/4}+ct^{-5/16}\int_0^1   w^{1 / p-9/ 8}(1-w)^{1 / q-1} d w\\
     &\leq Ct^{-1/4}.
\end{align*}
Moreover,
\begin{align*}
    \mathcal{I}_2&=\int_0^{+\infty} e^{-t(w+\beta)^2}|\log(w+\beta )|(w+\beta)^{1 / p-1 / 2} w^{1 / q-1} d w\\
     &=\int_0^{1-\beta} e^{-t(w+\beta)^2}|\log(w+\beta )|(w+\beta)^{1 / p-1 / 2} w^{1 / q-1} d w+ \int_{1-\beta}^{+\infty} e^{-t(w+\beta)^2}|\log(w+\beta )|(w+\beta)^{1 / p-1 / 2} w^{1 / q-1} d w\\
   &\leq   \int_0^{1-\beta} e^{-t w^2}|\log(w )| w^{-1/2} d w+\int_{1-\beta}^{+\infty} e^{-t(w+\beta)^2}(w+\beta)^{1 / p+1 / 2}w^{1 / q-1} dw\\
   &\leq \int_0^{+\infty} e^{-t w^2}|\log(w )| w^{-1/2} d w+\norm{ e^{-t(w+\beta)^2/2}(w+\beta)}{L^\infty}\int_{0}^{+\infty}e^{-t w^2/2}w^{1/q-1}dw\\
   &\leq c \log t/t^{1/4}+ c t^{-1/2-1/2q}.
\end{align*}


\end{proof}
\begin{lemma}
\label{lm:mbound2}
   Given $r(z)\in H^{1,1}_1$ with $\|r\|_{H^{1,1}} \leqslant \eta$ and $\sqrt{2}C_S^4>t>0$
    \begin{align}
        \norm{m_\pm(z)}{L^\infty}&\leq C_{\tau m}(\eta), \\
        \norm{\mu(z)}{L^\infty}&\leq  C_{\tau \mu}(\eta).
    \end{align}
    \end{lemma}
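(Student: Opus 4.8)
The interval $0<t<\sqrt2\,C_S^4$ in the statement is exactly the regime where the $\bar\partial$-deformation underlying Proposition \ref{prop:mbound1} is unavailable, since there the operator $S$ of \eqref{op:S} need not be a contraction. The plan is therefore to step outside the steepest-descent picture and to bound $m_\pm$ and $\mu$ directly from the Jost-function representation of the solution of Problem \ref{prob:NLS.RH0}. Recall that the boundary values $m_\pm(z;x,t)$ on $\bbR$ are expressible through the normalised Jost solutions of the AKNS system \eqref{L} for the potential $Q(x,t)$ --- where $q(x,t)$ is the defocussing-NLS evolution of the data with reflection coefficient $r$ --- together with the scattering data; for the defocussing sign the coefficient $a$ is zero-free with $|a(z)|\ge 1$ on $\bbR$, so these combinations inherit the bounds of the Jost solutions. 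The proof will thus reduce to (i) the classical Volterra bound for the normalised Jost solutions and (ii) a uniform control of $\|q(\cdot,t)\|_{L^1}$ on the given $t$-interval.

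For (i): each normalised Jost solution $N(x,z)$ satisfies a Volterra integral equation $N(x,z)=I+\int_{\pm\infty}^{x}e^{i(x-y)z\,\ad \sigma}\bigl(Q(y)N(y,z)\bigr)\,dy$; since $e^{i(x-y)z\,\ad \sigma}$ is bounded for $z\in\bbR$ and $Q$ is off-diagonal, iterating this equation gives $\sup_{x\in\bbR,\,z\in\bbR}|N(x,z)-I|\le e^{c\|q\|_{L^1}}-1$, uniformly in $x$. Hence $\|m_\pm(\cdot;x,t)\|_{L^\infty(\bbR)}\le e^{c\|q(\cdot,t)\|_{L^1}}$, and since $\mu=m_\pm (v_\theta^\pm)^{-1}$ with $\|(v_\theta^\pm)^{-1}\|_{L^\infty}\le 1+\|r\|_{L^\infty}\le 1+\rho$, also $\|\mu(\cdot;x,t)\|_{L^\infty(\bbR)}\le(1+\rho)\,e^{c\|q(\cdot,t)\|_{L^1}}$.

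For (ii): by \eqref{time-1} the reflection coefficient of $q(\cdot,t)$ equals $e^{-itz^2}r(z)$, and because $(e^{-itz^2}r)'=e^{-itz^2}(r'-2itz\,r)$ while the weight $(1+|z|)$ is unchanged under the unimodular factor, $\|e^{-itz^2}r\|_{H^{1,1}}\le \|r'\|_{L^2}+2t\|zr\|_{L^2}+\|(1+|z|)r\|_{L^2}\le 2\eta(1+t)$. By the Lipschitz continuity of $\mathcal{R}^{-1}$ together with $\mathcal{R}^{-1}(0)=0$ this yields $\|q(\cdot,t)\|_{H^{1,1}}\le c\,\eta(1+t)$, whence $\|q(\cdot,t)\|_{L^1}\le c\,\eta(1+t)$ by Cauchy--Schwarz. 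On $0<t<\sqrt2\,C_S^4$ this is $\le c\,\eta(1+C_S^4)$, so putting $C_{\tau m}(\eta):=\exp\!\bigl(c\,\eta(1+C_S^4)\bigr)$ and $C_{\tau\mu}(\eta):=(1+\rho)\,C_{\tau m}(\eta)$ gives the asserted bounds.

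The main obstacle is the linear-in-$t$ growth in (ii): the $H^{1,1}$-norm of $q(\cdot,t)$ genuinely grows, because the $z$-derivative falls on the oscillatory factor $e^{-itz^2}$, so the argument closes only while $t$ is confined to a bounded set --- precisely the hypothesis here, which dovetails with Proposition \ref{prop:mbound1} for large $t$ so that all $t>0$ is covered (one widens the threshold in this lemma, at the cost of a larger constant, to overlap the range $t>4C_S^4$). A routine secondary point is to make the Volterra iteration in (i) uniform over all $z\in\bbR$ rather than on compact subsets, which is standard for the AKNS system since the relevant exponential is bounded on $\bbR$; and, as with $M_\infty(\rho,\lambda,\eta)$ in \eqref{Minfty}, the constants carry a harmless dependence on $\rho$ (through $C_S$ and the factor $1+\rho$).
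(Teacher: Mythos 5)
Your argument is essentially the paper's own proof: the paper likewise bounds $m_\pm$ (and hence $\mu$) by the standard Volterra series for the Jost solutions, giving $\|m\|_{L^\infty}\le \exp\bigl(c\|q(\cdot,t)\|_{L^1}\bigr)$, and controls the potential on the bounded time interval by the inverse-scattering bound $\|q(\cdot,t)\|_{L^{2,1}}\le t\,\|r\|_{H^{1,1}}$ from \cite{Zhou98}, which for $t\lesssim C_S^4$ yields $C_{\tau m}(\eta)=\exp\bigl(4C_S^4\,\|r\|_{H^{1,1}}\bigr)$. The only difference is that you re-derive this potential bound from the Lipschitz continuity of $\mathcal{R}^{-1}$ and the linear-in-$t$ growth of $\|e^{-itz^2}r\|_{H^{1,1}}$ rather than quoting it, and your remark about enlarging the threshold to overlap the range $t>4C_S^4$ of Proposition \ref{prop:mbound1} is a sensible fix of a mismatch present in the paper itself.
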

\begin{proof}
Suppose that $t<C_s^{4}$. In \cite{Zhou98}, it is proven that 
\begin{align}
    \norm{q(x,t)}{L^{2,1}}\leq t \norm{r}{H^{1,1}}.
\end{align}
Thus by the standard Volterra theory, we can obtain that 
\begin{equation}
    \norm {m(z;x,t)}{L^\infty} \leq \exp\left(4C_S^4 \norm{r}{H^{1,1}}\right):=C_{\tau m}(\eta).
\end{equation}
\end{proof}
\begin{lemma}
   Given $r(z)\in H^{1,1}_1$ with $\|r\|_{H^{1,1}} \leqslant \eta$, $\|r\|_{L^{\infty}} \leqslant \rho<1$ and $t>0$
    \begin{align}
        \left\vert \Delta m_\pm(z)\right\vert&\leq C_{\Delta m}(\rho,\eta) \left | 1+(1+z)\ln \left | \frac{z}{1+z}\ \right |  \right |\norm{\Delta r}{H^{1,1}_1},\\
        \left\vert\Delta \mu(z)\right\vert&\leq C_{\Delta \mu} (\rho,\eta) \left | 1+(1+z)\ln \left | \frac{z}{1+z}\ \right |  \right |\norm{\Delta r}{H^{1,1}_1}.
    \end{align}  
    
\end{lemma}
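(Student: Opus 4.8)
The plan is to argue by the same two-regime split used for Lemmas \ref{lm:ED} and \ref{lm:mbound2}: for $t$ past the threshold of Lemma \ref{lm:ED} I would use the $\dbar$-factorization $m = E\,P(\sqrt{8t}(\cdot-z_0))\,\delta^{\sigma_3}$, and for $t$ below it I would extend the Volterra argument of Lemma \ref{lm:mbound2}.

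In the large-$t$ regime, expand the difference across the factorization,
\[
\Delta m \;=\; (\Delta E)\,P\,\delta^{\sigma_3} \;+\; E\,(\Delta P)\,\delta^{\sigma_3} \;+\; E\,P\,\Delta\!\left(\delta^{\sigma_3}\right),
\]
and bound the three pieces. The first is handled directly by Lemma \ref{lm:ED} together with the uniform $L^\infty$ bounds on $P$ and $\delta^{\pm1}$, contributing $\lesssim C_{\delta E}(\rho,\eta)\norm{\Delta r}{H^{1,1}_1}$ (using $\|r\|_{H^1}\le\eta$). The third piece is precisely where the stated $z$-factor originates: by \eqref{est:Ddelta} one has $|\Delta\delta^{\mp2}|\lesssim\bigl(\tfrac{\eta}{1-\rho^2}+\tfrac{\rho}{1-\rho}\,|1+(1+z)\ln|z/(1+z)||\bigr)\norm{\Delta r}{H^1}$, and since $\delta^{\pm1}$ are bounded above and below this passes to $\Delta(\delta^{\sigma_3})$, the bounded prefactor being absorbed into $C_{\Delta m}(\rho,\eta)$. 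For the middle piece I would establish a Lipschitz estimate for the parabolic-cylinder parametrix: $P(\sqrt{8t}(z-z_0))$ depends on $r$ only through $\nu(z_0)=-\tfrac1{2\pi}\log(1-|r(z_0)|^2)$ and the value $r(z_0)$, so $|\Delta P|\lesssim C(\rho,\eta)\bigl(|\Delta\nu(z_0)|+|\Delta r(z_0)|\bigr)\lesssim C(\rho,\eta)\norm{\Delta r}{H^{1,1}_1}$ via the usual $\tfrac{1}{1-\rho}$-type bound for $\Delta\log(1-|r|^2)$ and the embedding $H^1\hookrightarrow L^\infty$; to bring this under the common factor I would invoke the elementary comparison $\tfrac{1}{1+|z|}\le c\,|1+(1+z)\ln|z/(1+z)||$ on $\bbR$, which is compatible with the decay $P(\zeta)-I=O(\zeta^{-1})$.

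For the small-$t$ regime, I would differentiate, in the potential $Q$, the Volterra integral equation for the Beals--Coifman solution $m_\pm$ of \eqref{L} (equivalently, apply the second resolvent identity to the Volterra operator), so that $|\Delta m_\pm|$ is controlled by the Volterra operator norm times $\|\Delta Q\|_{L^{2,1}}$; Lipschitz continuity of $\calR^{-1}\colon H^{1,1}_1\to H^{1,1}$ gives $\|\Delta Q\|_{L^{2,1}}\lesssim\norm{\Delta r}{H^{1,1}_1}$, while the bound $C_{\tau m}(\eta)$ of Lemma \ref{lm:mbound2} keeps the constant finite on the bounded $t$-range. The estimates for $\Delta\mu$ then follow from those for $\Delta m_\pm$ through $\mu=m_\pm (v^\pm_\theta)^{-1}$: write $\Delta\mu=(\Delta m_\pm)(v^\pm_\theta)^{-1}+m_\pm\,\Delta\!\bigl((v^\pm_\theta)^{-1}\bigr)$, use $|\Delta(v^\pm_\theta)^{-1}|\lesssim|\Delta r|\le\norm{\Delta r}{L^\infty}$ with $r$ and $m_\pm$ bounded, and again dominate the $r$-only term by the common factor.

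The main obstacle will be the middle term of the large-$t$ splitting: producing the Lipschitz dependence of the parabolic-cylinder parametrix on $r$ with a constant of the advertised shape in $(\rho,\eta)$, and matching its $z$-decay to the function $|1+(1+z)\ln|z/(1+z)||$ supplied by the $\delta$-part, i.e.\ carrying out, uniformly in $z_0=x/2t$, the slightly delicate comparison between $P(\sqrt{8t}(z-z_0))-I$ and $|1+(1+z)\ln|z/(1+z)||$ over all of $\bbR$.
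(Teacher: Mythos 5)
Your overall route is essentially the paper's own: the paper disposes of this lemma in a single line, citing precisely the ingredients you assemble --- the difference estimate \eqref{est:Ddelta} for $\Delta\delta^{-2}$ (the source of the $z$-dependent factor), the $L^\infty$ bound on the parabolic cylinder parametrix $P$, and Lemma \ref{lm:mbound2} for the bounded-time regime --- and your decomposition $\Delta m=(\Delta E)P\delta^{\sigma_3}+E(\Delta P)\delta^{\sigma_3}+EP\,\Delta(\delta^{\sigma_3})$, together with Lemma \ref{lm:ED} for the first piece and the Volterra/Lipschitz argument for small $t$, is the natural way of filling in what the paper leaves implicit.

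One step as written would fail, however: the ``elementary comparison'' $\tfrac{1}{1+|z|}\le c\,|1+(1+z)\ln|z/(1+z)||$ is false on $\bbR$. The function $f(z)=1+(1+z)\ln|z/(1+z)|$ equals $1$ at $z=-1/2$ and tends to $-\infty$ as $z\to 0^-$, so it vanishes at some point of $(-1,0)$ (numerically near $z\approx-0.22$); there your right-hand side is zero while the left-hand side is of order one. Consequently the contributions that carry no $\Delta\delta$ factor --- the $(\Delta E)$ and $(\Delta P)$ terms, and likewise the $m_\pm\,\Delta((v_\theta^\pm)^{-1})$ term in your passage from $\Delta m_\pm$ to $\Delta\mu$ --- cannot be dominated pointwise by $|f(z)|\,\|\Delta r\|_{H^{1,1}_1}$ near that zero. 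Observe that \eqref{est:Ddelta} itself has the shape ``constant plus $|f(z)|$'', and the lemma can only be meant with the bound read the same way, i.e.\ as $C_{\Delta m}(\rho,\eta)\bigl(1+\bigl|(1+z)\ln|z/(1+z)|\bigr|\bigr)\|\Delta r\|_{H^{1,1}_1}$ (equivalently, with an additive constant); this gap is present in the paper's terse statement as much as in your write-up. With that reading, your constant-in-$z$ pieces are absorbed trivially, no matching of the decay of $P(\sqrt{8t}(z-z_0))-I$ against $|f(z)|$ is needed (so the ``main obstacle'' you flag disappears), and the remainder of your argument goes through exactly as the paper intends.
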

\begin{proof}
These are estimates are     a combination of \eqref{est:Ddelta}, the $L^\infty$ bound of the parabolic cylinder function $P$ and Lemma \ref{lm:mbound2}.
\end{proof}

\section{A priori estimates}
\label{sec:priori}


The goal of this section is obtaining the following key estimate that is sufficient for proving our Theorem \ref{main}. Throughout the section we assume that $\norm{r_i}{H^{1,1}}=\eta$ and $\norm{r_i}{L^\infty}=\rho<1$ for $i=1,2$. Compared with \cite[Section 6]{DZ-2}, we make use of newly obtained resolvent operator estimate in Proposition \ref{prop:new apriori} and also new estimates on the $\widetilde{L}$ operator given by \eqref{est LG2}-\eqref{est:DLG1}.
\begin{proposition}
\label{prop:G-estimate}
Let $G$ to be the perturbation term \eqref{G}. Set
$$
    F=P_{12} \int e^{-i \theta} m_{-}^{-1} G m_{-} dy
$$
where $P_{12}$ denotes the projection onto the $(1,2)$ entry of the $2\times 2$ matrix-valued  equation \eqref{eq:rF}.
Then $F$ has the following estimates:
\begin{equation}
\label{est:F}
    \|F\|_{H^{1,1}}\leq \frac{C^{1,1}_F(\rho, \eta)}{(1+t)^{l/2-1/2}}
\end{equation}
\begin{equation}
\label{est:DF}
    \|\Delta F\|_{H^{1,1}}\leq \frac{C^{1,1}_{\Delta F}(\rho, \eta)}{(1+t)^{l/2+1/2p-3/4}}\|\Delta r\|_{H^{1,1}}.\
\end{equation}
where the two constants $C^{1,1}_F(\rho, \eta)$ and $C^{1,1}_F(\rho, \eta)$ are uniform in $x$ and $t$ and are monotonic in $\rho$ and $\eta$.
\end{proposition}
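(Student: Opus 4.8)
The plan is to run the scheme of \cite[Section~6]{DZ-2}, feeding in the three improvements established earlier in this paper: the $t$-uniform $L^\infty$ bounds on $m_\pm$ and $\mu$ from Section~\ref{sec:dbar} (Proposition~\ref{prop:mbound1}, Lemma~\ref{lm:mbound2} and their $\Delta$-analogues), the $L^p$ resolvent bound $\|(1-C_{v_\theta})^{-1}\|_{L^p\to L^p}\le\mathcal{K}_p(\rho,\eta)$ of Proposition~\ref{prop:new apriori}, and the new bounds \eqref{est LG2}--\eqref{est:DLG1} for the integral operator $\widetilde L$ through which $F$ is analyzed. First I would record that, by the Beals--Coifman representation $m_\pm=\mu\,v^{\pm}_\theta$ with $(1-C_{v_\theta})\mu=I$ and the $t$-uniform $L^\infty$ control of $\mu$ and $m_\pm$, the integrand of $F$ obeys the pointwise bound
\[
\bigl|(m_-^{-1}Gm_-)_{12}(y,z)\bigr|\ \le\ c\,(1+\rho)^2 M_\infty^2\,|a(y)|\,|q(y,t)|^{\,l+1},
\]
with $M_\infty$ independent of $x,t$; consequently the whole estimate reduces, up to the projection $P_{12}$, to applying the $\widetilde L$-bounds \eqref{est LG2}--\eqref{est:DLG1} to the specific source $G$ in \eqref{G}.

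For \eqref{est:F} I would split $|q|^{l+1}=|q|^{l-1}\cdot|q|^{2}$. The first factor supplies the decay: $\||q(t)|^{l-1}\|_{L^\infty}\lesssim(1+t)^{-(l-1)/2}$ by the dispersive estimate $\|q(t)\|_{L^\infty}\lesssim(1+t)^{-1/2}$, which is available here because $q(x,t)=\mathcal R^{-1}\!\big(r(z;q(t))\big)$ is reconstructed from the \emph{cubic} NLS Riemann--Hilbert problem (Problem~\ref{prob:NLS.RH0}) with scattering data of controlled $H^{1,1}$-norm and $L^\infty$-norm strictly below $1$, so the $\dbar$-steepest descent analysis of Section~\ref{sec:dbar} (cf.\ \cite{DMM}) applies at each fixed $t$. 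The second factor $a|q|^{2}$, together with the weights and the one derivative demanded by the $H^{1,1}_z$-norm, is absorbed by \eqref{est LG2}--\eqref{est:DLG1}, using only that $a$ is Schwartz, that $\|q(t)\|_{L^2}$ is conserved, and that $\|q(t)\|_{H^{1,1}}\lesssim\|r(t)\|_{H^{1,1}}$ (Lipschitz continuity of $\mathcal R^{-1}$, \cite{Zhou98}). The delicate part is $\|\partial_z F\|_{L^2}$: $\partial_z$ hits the Fourier phase $e^{-iyz}$ (a harmless factor $y$, killed by $a$), the quadratic phase $e^{itz^2}$ (a factor $2itz$, the only source of potential $t$-growth, removed by integration by parts in $y$ as encoded in \eqref{est LG2}--\eqref{est:DLG1} and in the appendix estimates replacing \cite[Lemma~5.29, Lemma~5.36]{DZ-2}), and $m_-$ through $\partial_z\mu$, which solves $(1-C_{v_\theta})[\partial_z\mu]=(\partial_z C_{v_\theta})\mu$ with forcing involving $\partial_z w_\theta$, hence $r'$ and the weight; here one invokes $\|(1-C_{v_\theta})^{-1}\|_{L^p\to L^p}\le\mathcal{K}_p$ for a fixed $p$ with $0<p-2\ll1$, the restriction $p>2$ being forced because the relevant Cauchy singular integrals of $\partial_z w_\theta$ lie in $L^p$ only, not $L^2$. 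Collecting the pieces gives \eqref{est:F}.

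For \eqref{est:DF} I would apply $\Delta$ and distribute it by the telescoping rule over the constituents of the (differentiated) integrand: $\Delta r$ and $\Delta r'$ occur in $G$ and in $w_\theta$; $\Delta m_\pm$ and $\Delta\mu$ are controlled by the $\Delta$-estimates of Section~\ref{sec:dbar}, e.g.\ $|\Delta m_\pm|\lesssim C_{\Delta m}(\rho,\eta)\,\bigl|\,1+(1+z)\ln|z/(1+z)|\,\bigr|\,\|\Delta r\|_{H^{1,1}_1}$; and $\Delta(1-C_{v_\theta})^{-1}$ is handled by the second resolvent identity together with Proposition~\ref{prop:new apriori}. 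Every resulting term factors as $\|\Delta r\|_{H^{1,1}}$ times an expression estimated exactly as in \eqref{est:F}; the sharper exponent $l/2+1/(2p)-3/4$ is produced by the extra H\"older room gained by placing the difference factors in $L^p$ rather than $L^2$, which is precisely the content of the improved bound \eqref{est:DLG1} (in place of \cite[Lemma~5.36]{DZ-2}). Monotonicity of $C^{1,1}_F$ and $C^{1,1}_{\Delta F}$ in $\rho,\eta$, and their independence of $x,t$, are inherited from those of $M_\infty$, $\mathcal{K}_p$, the $C_{\Delta m}$-type constants, and the $\widetilde L$-bounds.

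I expect the main obstacle to be exactly the $z$-derivative of the quadratic phase: the factor $2itz$ from $\partial_z e^{itz^2}$, if estimated crudely, converts the decay $(1+t)^{-(l-1)/2}$ into growth, and it is the removal of this spurious $t$-power --- via the integration-by-parts/cancellation structure of \eqref{est LG2}--\eqref{est:DLG1} and of the appendix estimates --- that lowers the admissible power from $l>7/2$ in \cite{DZ-2} to $l>3$ here. A secondary difficulty is confirming that the difference estimate genuinely captures the extra half-power of decay recorded in \eqref{est:DF}.
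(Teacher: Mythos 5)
Your overall route coincides with the paper's: split the $H^{1,1}$ norm into the $L^2$, weighted $L^2$ and $\partial_z$ pieces, run the scheme of \cite[Section 6]{DZ-2} with the new inputs of this paper (the $t$-uniform $L^\infty$ bounds on $m_\pm,\mu$, the resolvent bound $\mathcal K_p$ of Proposition \ref{prop:new apriori}, and the $\tilde L G$ bounds \eqref{est LG2}--\eqref{est:DLG1}), and telescope for the differences; this is exactly how the Proposition is reduced to Lemmas \ref{le:FL2}--\ref{le:dDF}. However, two of your explicitly stated steps would not survive being made precise. The reduction via the pointwise bound $\bigl|(m_{-}^{-1}Gm_{-})_{12}\bigr|\le c(1+\rho)^2M_\infty^2|a|\,|q|^{l+1}$ only yields an $L^\infty(dz)$ bound on $F$, since all $z$-dependence of the integrand is discarded; it cannot by itself produce $\|F\|_{L^2(dz)}$ or $\|zF\|_{L^2(dz)}$. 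What is actually used is the decomposition \eqref{eq:decomF}: the leading term is controlled by Plancherel ($\|F^{(1)}\|_{L^2(dz)}\simeq\|a|q|^l q\|_{L^2(dx)}$, and $zF^{(1)}$ corresponds to $\partial_x G$), while the cross terms require Minkowski together with the mixed-norm bounds $\|m_{-}-I\|_{L^2(dz)\otimes L^\infty(dx)}$ and $\||z|^{1/2}(m_{-}-I)\|_{L^4(dz)\otimes L^\infty(dx)}$ from \cite[Section 5]{DZ-2}; these are genuinely $z$-square-integrable quantities and are not implied by $M_\infty$ or by \eqref{est LG2}--\eqref{est:DLG1}.

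The second, more serious, issue is your treatment of the $z$-derivative of the $m_{-}$ factors. The equation $(1-C_{w_\theta})\partial_z\mu=(\partial_z C_{w_\theta})\mu$ has forcing built from $\partial_z w_\theta$, which contains $i(x-2tz)\,r\,e^{\pm i\theta}$ --- precisely the $t$-growth you are trying to eliminate --- and not merely $r'$ as you assert. The cure is the conjugated derivative: $(L+2tQ)\mu=(\partial_z-\tilde L)\mu$ satisfies the resolvent equation with forcing $C_{w_\theta'}\mu$, which involves only $r'$, yielding \eqref{eq:tildeLmu}--\eqref{eq:tildeLm}. Accordingly $\partial_z F$ must be organized as in \eqref{eq: parzF}, with $(\partial_z-\tilde L)m_\pm$ acting on the solutions and $\tilde L$ transferred to $G$, after which $P_3$ is further split via $m_{-}=\tilde\mu\,\delta_{-}^{\sigma_3}v_\theta^{\#}$ and the Fourier bounds on $\delta_{-}^{\pm2}$ and $\Delta\delta_{-}^{\pm2}$. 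Your ``integration by parts in $y$'' heuristic captures the right idea for the $G$ factor, but without making the $(\partial_z-\tilde L)$ mechanism explicit for $m_\pm$ the derivative and difference-of-derivative estimates (Lemmas \ref{le:dF} and \ref{le:dDF}) do not close, and it is these lemmas that set the exponents $l/2-1/2$ and $l/2+1/2p-3/4$ in \eqref{est:F}--\eqref{est:DF}.
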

\begin{proof}
Because of the fact that $\|f\|_{H^{1,1}} = \|f\|_{L^2} + \|x f\|_{L^2} +\|\partial_x f\|_{L^2}$, we estimate each norm of the term $F$ and $\Delta F$ separately. \eqref{est:F} follows from Lemma \ref{le:FL2}, Lemma \ref{le:zF2} and Lemma \ref{le:dF}. \eqref{est:DF} follows from Lemma \ref{le:DF}, Lemma \ref{le:DzF} and Lemma \ref{le:dDF}.
\end{proof}
\begin{lemma}The $L^2$ norm of $F$ can be estimated as
\label{le:FL2}
\begin{equation}
\|F\|_{L^2} \leq \Twomat{l+3}{l+1}{(l+1)/2}{5l+5}\mathcal{K}_4^2:=\frac{C_{F}^{0,0}(\rho, \eta)}{(1+t)^{(l+1)/2}}
    \end{equation}
    where $\mathcal{K}_4$ is given by \eqref{Kp} and the constant $C^{0,0}_F(\rho, \eta)$ is uniform in $x$ and $t$ and are monotonic in $\rho$ and $\eta$.
\end{lemma}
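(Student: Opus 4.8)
\emph{Proof sketch.} The argument follows the scheme of \cite[Section~6]{DZ-2}, the new feature being that the Schwartz weight $a$ can be kept in $L^{2}$ while every one of the $l+1$ factors of $q$ in $G$ is placed in $L^{\infty}$; this is what yields the half‑power improvement of the $t$‑decay over \cite{DZ-2}. Since $|e^{itz^{2}}|=1$, it suffices to bound the $L^{2}_{z}$‑norm of
\[
z\longmapsto \int_{\mathbb R}e^{-iyz}\bigl(m_{-}^{-1}(z;y)\,G(q(y))\,m_{-}(z;y)\bigr)_{12}\,dy .
\]
The plan is to split $m_{-}^{-1}Gm_{-}=G+(m_{-}^{-1}-I)G+G(m_{-}-I)+(m_{-}^{-1}-I)G(m_{-}-I)$, producing $F=F_{0}+F_{1}+F_{2}+F_{3}$. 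For the principal term, $F_{0}(z)=-i\,e^{itz^{2}}\int_{\mathbb R}e^{-iyz}a(y)|q(y)|^{l}q(y)\,dy$ is a modulated Fourier transform of $a\,|q|^{l}q$, so Plancherel's theorem gives $\norm{F_{0}}{L^{2}}=\sqrt{2\pi}\,\norm{a\,|q|^{l}q}{L^{2}}\le\sqrt{2\pi}\,\norm{a}{L^{2}}\,\norm{q}{L^{\infty}}^{l+1}$; together with the dispersive bound $\norm{q(\cdot,t)}{L^{\infty}}\le C(\eta,\rho)\,(1+t)^{-1/2}$ — uniform in $t\ge0$ by the $L^{\infty}$ estimates of Section~\ref{sec:dbar} and, for $t\lesssim1$, the embedding $H^{1,1}\hookrightarrow L^{\infty}$ with $\norm{r}{H^{1,1}}=\eta$ — this gives $\norm{F_{0}}{L^{2}}\lesssim(1+t)^{-(l+1)/2}$.

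For the corrections $F_{1},F_{2},F_{3}$ the obstruction is that $m_{-}(z;y)$ also depends on the spectral variable $z$, so Plancherel does not apply directly. Here I would use the Beals--Coifman representation, writing $m_{-}(\diamond;y)-I=C^{-}\bigl(\mu(\diamond;y)(w^{+}_\theta+w^{-}_\theta)(\diamond;y)\bigr)$ in the $z$‑variable (and the analogue for $m_{-}^{-1}-I$), together with $\mu-I=(1-C_{v_\theta})^{-1}C_{v_\theta}I$. The $L^{p}\!\to\!L^{p}$ boundedness of the Cauchy projections (constant $\mathbf c_{p}$) and the resolvent bound $\norm{(1-C_{v_\theta})^{-1}}{L^{p}\to L^{p}}\le\mathcal K_{p}$ of Proposition~\ref{prop:new apriori} then reduce each $F_{j}$ to $L^{2}$‑estimates for $w^{\pm}_\theta$ and $\mu-I$ (each $\lesssim\norm{r}{L^{2}}\le\eta$) and for $G$, the uniform $L^{\infty}$‑bounds $\norm{m_{\pm}}{L^{\infty}},\norm{\mu}{L^{\infty}}\le M_{\infty}$ from \eqref{Minfty} absorbing the remaining factors. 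In the worst term $F_{3}$ two $\mu$‑corrections must be estimated simultaneously and paired by H\"older $L^{4}\cdot L^{4}\subset L^{2}$, which is the source of the factor $\mathcal K_{4}^{2}$; in each $F_{j}$ the $l+1$ copies of $q$ are again routed as $\norm{a}{L^{2}}\norm{q}{L^{\infty}}^{l+1}$ (or absorbed into the $L^{2}$‑smallness of $w^{\pm}_\theta$ and $\mu-I$), so $F_{1},F_{2},F_{3}$ decay at the rate $(1+t)^{-(l+1)/2}$ of $F_{0}$.

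Summing the four contributions and bookkeeping the powers of $\eta$ and $(1-\rho)^{-1}$ carried by $M_{\infty}$, $\mathcal K_{4}$, $\norm{q(\cdot,t)}{L^{\infty}}$ and $\norm{r}{H^{1,1}}$ — all monotone in $\rho,\eta$ and uniform in $x,t$ — produces the asserted bound with a constant $C_{F}^{0,0}(\rho,\eta)$ of the stated form. I expect the main obstacle to be the correction terms: a crude pointwise bound $|m_{-}^{-1}Gm_{-}|\le M_{\infty}^{2}|G|$ followed by $|\int e^{-iyz}(\cdots)\,dy|\le M_{\infty}^{2}\norm{G}{L^{1}}$ is independent of $z$ and hence not in $L^{2}_{z}$, so the oscillatory $y$‑integral must be coupled to the Beals--Coifman/resolvent structure in $z$; one must also check that the $l+1$ copies of $q$ can genuinely all be sent to $L^{\infty}$ against $\norm{a}{L^{2}}$ (if a single one were forced into $L^{2}$, the improvement over \cite{DZ-2} would disappear). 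Once this is arranged the remaining estimates are the same kind of Plancherel and H\"older bookkeeping as in \cite[Section~6]{DZ-2}, differing only by the insertion of the Schwartz factor $a$.
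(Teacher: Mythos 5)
Your proposal is correct and follows essentially the same route as the paper: the identical four-term splitting of $m_{-}^{-1}Gm_{-}$, Plancherel for the leading term with all $l+1$ factors of $q$ in $L^{\infty}$ against $\norm{a}{L^2}$ (or $\norm{a}{L^1}$), and the correction terms controlled by Minkowski together with the mixed-norm bounds $\norm{m_{\pm}-I}{L^{2}(dz)\otimes L^{\infty}(dx)}$, $\norm{m_{\pm}^{-1}-I}{L^{4}(dz)\otimes L^{\infty}(dx)}$ coming from the Beals--Coifman/resolvent machinery, with the quadratic term paired in $L^{4}\cdot L^{4}\subset L^{2}$ giving $\mathcal{K}_{4}^{2}$. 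The only difference is presentational: the paper quotes these mixed-norm estimates directly from \cite{DZ-2} rather than re-deriving the Beals--Coifman representation as you sketch.
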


\begin{proof}
Similar to \cite[(6.9)]{DZ-2}, we decompose $F$ into the following parts:
\begin{align}
\label{eq:decomF}
    F&=P_{12}\int e^{i \theta}G dy + P_{12}\int e^{i \theta}(m_-^{-1}-I)G dy+P_{12}\int e^{i \theta}G (m_--I) dy + P_{12}\int e^{i \theta}(m_-^{-1}-I)G (m_--I) dy\\
    \nonumber
    &=F^{(1)}+F^{(2)}+F^{(3)}+F^{(4)}.
\end{align}

By Minkowski's inequality, and the estimate of $\|q\|_{L^{\infty}} $ given by \cite[(4.19)]{DZ-2}, we deduce that

\begin{eqnarray*}
\|F\|_{L^2} & \leq & c_1 \|a(x) |q|^l q\|_{L^2} + c_2 \|a(x) |q|^l q\|_{L^1}\|m_--I\|_{L^2(dz) \otimes L^ \infty(dx)} \\
&&+ c_3 \|a(x) |q|^l q\|_{L^2}\|m_-^{-1}-I\|_{L^2(dz) \otimes L^ \infty(dx)} + c_4 \|a(x) |q|^l q\|_{L^2}\|m_-^{-1}-I\|_{L^4(dz) \otimes L^ \infty(dx)}^2 \\
&\leq& \Twomat{l+1}{l+1}{(l+1)/2}{5l+5}+\Twomat{l+2}{l+1}{(l+1)/2}{5l+6}+\Twomat{l+2}{l+1}{(l+1)/2}{5l+6}\\
&&+\Twomat{l+3}{l+1}{(l+1)/2}{5l+6}\mathcal{K}_4^2\\
&\leq& \Twomat{l+1}{l+1}{(l+1)/2}{5l+6}\mathcal{K}_4^2
\end{eqnarray*}as claimed.
\end{proof}

\begin{lemma}The weighed $L^2$ norm of $F$ can be estimated as
\label{le:zF2}
\begin{equation}
    \|z F\|_{L^2} \leq \Twomat{l}{l+3}{l/2}{5l+11}\mathcal{K}_4^2:=\frac{C^{0,1}_{F}(\rho,\eta)}{(1+t)^{l/2}}
    \end{equation}
    where $\mathcal{K}_4$ is given by \eqref{Kp} and the constant $C^{0,1}_F(\rho, \eta)$ is uniform in $x$ and $t$ and are monotonic in $\rho$ and $\eta$.
\end{lemma}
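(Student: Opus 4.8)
\emph{Plan.}
The only new feature relative to Lemma~\ref{le:FL2} is the weight $z$, and the whole idea is to pay for it by an integration by parts in the spatial variable $y$: writing $e^{-i\theta}$ for the oscillatory factor of \eqref{F} (so that $\theta=yz-tz^{2}$ is affine in $y$ and $z\,e^{-i\theta}=i\,\partial_{y}(e^{-i\theta})$), and using that $G$ is Schwartz while $m_{-}^{\pm1}$ are bounded on $\mathbb{R}\times\mathbb{R}$ by Proposition~\ref{prop:mbound1} and Lemma~\ref{lm:mbound2}, the boundary terms vanish and
\[
zF=-\,i\,P_{12}\!\int_{-\infty}^{\infty}e^{-i\theta}\,\partial_{y}\!\bigl(m_{-}^{-1}G\,m_{-}\bigr)\,dy .
\]
To differentiate the integrand I would use the Lax equation \eqref{L}, i.e.\ $\partial_{y}m_{-}=iz\,\ad\sigma(m_{-})+Q\,m_{-}$ and hence $\partial_{y}m_{-}^{-1}=iz\,\ad\sigma(m_{-}^{-1})-m_{-}^{-1}Q$, which yields
\[
\partial_{y}\!\bigl(m_{-}^{-1}G\,m_{-}\bigr)=iz\Bigl(\ad\sigma(m_{-}^{-1}G\,m_{-})-m_{-}^{-1}\ad\sigma(G)\,m_{-}\Bigr)+\Bigl(-m_{-}^{-1}QG\,m_{-}+m_{-}^{-1}(\partial_{y}G)\,m_{-}+m_{-}^{-1}GQ\,m_{-}\Bigr).
\]

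The step I expect to be the main obstacle is that this maneuver feeds $z$ back into the integrand. Here one exploits that $G$ is purely off-diagonal and that $\ad\sigma$ kills diagonal entries: since $\ad\sigma(G)$ differs from $G$ only in its $(2,1)$-entry, a short computation of $(1,2)$-entries collapses the bracketed $z$-term to $2iz\,G_{21}(m_{-}^{-1}-I)_{12}(m_{-}-I)_{12}$, which is \emph{quadratic} in the small quantities $m_{-}^{\pm1}-I$. The remaining single power of $z$ in this term is removed by integrating by parts once more in $y$: differentiating the product $G_{21}(m_{-}^{-1}-I)_{12}(m_{-}-I)_{12}$ and again invoking $\partial_{y}m_{-}^{\pm1}=iz\,\ad\sigma(m_{-}^{\pm1})+\dots$, the new $z$-term reproduces twice the original integral, so one solves for it and is left only with the $z$-free integrals $\int e^{-i\theta}\bigl[(\partial_{y}G_{21})(m_{-}^{-1}-I)_{12}(m_{-}-I)_{12}-G_{21}(m_{-}^{-1}Q)_{12}(m_{-}-I)_{12}+G_{21}(m_{-}^{-1}-I)_{12}(Qm_{-})_{12}\bigr]dy$, together with $\int e^{-i\theta}\,P_{12}(Y)\,dy$ where $Y$ denotes the $z$-free bracket above.

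It then remains to estimate these $z$-free quantities in $L^{2}(dz)$. I would do this by Minkowski's inequality in $y$, by Plancherel (the pieces that depend only on $y$ become exact $L^{2}(dy)$ norms after $\int e^{-iyz}\cdot$, the factor $e^{itz^{2}}$ being unimodular), by the uniform $L^{\infty}(dz)$ bounds on $m_{-}^{\pm1}$ and $m_{-}^{\pm1}-I$ from Section~\ref{sec:dbar}, and — for each factor $m_{-}^{\pm1}-I$ — by $m_{-}^{\pm1}-I=C^{\pm}(\mu(w_{\theta}^{+}+w_{\theta}^{-}))$, $\mu=(1-C_{v_{\theta}})^{-1}I$, and the resolvent bound of Proposition~\ref{prop:new apriori}, so that such a factor measured in $L^{2}(dz)$ or $L^{4}(dz)$ costs one factor $\mathcal{K}_{4}$; the worst terms contain two of them, which is the origin of $\mathcal{K}_{4}^{2}$. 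The dominant contribution is $(\partial_{y}G)_{12}=-i\,\partial_{y}(a|q|^{l}q)$ and the terms obtained by dressing it with $m_{-}^{\pm1}-I$, bounded via
\[
\|\partial_{y}(a|q|^{l}q)\|_{L^{2}}\ \le\ \|a'\|_{L^{2}}\,\|q\|_{L^{\infty}}^{\,l+1}+c\,\|a|q|^{l}\|_{L^{\infty}}\,\|\partial_{y}q\|_{L^{2}}\ \lesssim\ (1+t)^{-l/2},
\]
using $\|q(\cdot,t)\|_{L^{\infty}}\lesssim(1+t)^{-1/2}$ from \cite[(4.19)]{DZ-2} and that $\|\partial_{y}q(\cdot,t)\|_{L^{2}}$ stays bounded in $t$. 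This is exactly where the half power of $(1+t)^{-1}$ relative to Lemma~\ref{le:FL2} is lost: the derivative forced onto $q$ can only be absorbed in $L^{2}$, not in $L^{\infty}$, so one of the $l+1$ factors $\|q\|_{L^{\infty}}$ is traded for $\|\partial_{y}q\|_{L^{2}}=O(1)$. The remaining terms are lower order: those involving $QG$ or $GQ$ carry $|q|^{l+2}$ (and, $QG$ and $GQ$ being diagonal, their $(1,2)$-entries even vanish at leading order), so they gain an extra $(1+t)^{-1/2}$. Collecting everything, the worst term is $\lesssim(1+t)^{-l/2}\mathcal{K}_{4}^{2}$ times a function of $\eta,\rho$ monotone in each; tracking the $\eta$-powers through $\|q\|_{L^{\infty}}\lesssim\eta(1+t)^{-1/2}$ and the $(1-\rho)^{-1}$-powers through $\delta$, $\mathcal{K}_{4}$ and \cite[(4.19)]{DZ-2} puts the estimate in the claimed form $\Twomat{l}{l+3}{l/2}{5l+11}\mathcal{K}_{4}^{2}$.
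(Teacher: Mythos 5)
Your proposal is correct, and it reaches the stated bound by a route that genuinely differs from the paper's in one step. The paper first splits $F$ into the four pieces of \eqref{eq:decomF}: it treats $zF^{(1)}$ by Plancherel ($z\leftrightarrow\partial_x$ acting on $a|q|^lq$, with $\|q_x\|_{L^2}$ from \cite[Lemma 5.24]{DZ-2}, exactly your accounting of where the half power of $(1+t)^{-1}$ is lost), treats $zF^{(2)},zF^{(3)}$ by a single integration by parts in $y$ (leaving only $G_y(m_--I)$ and $GQ(m_--I)$ — the reappearing $z$-term drops because $G$ is off-diagonal and $\ad\sigma$ has zero diagonal), and, crucially, does \emph{not} integrate by parts on the quartic piece $zF^{(4)}$: there it writes $z=|z|^{1/2}\cdot|z|^{1/2}$ and invokes the weighted bounds $\||\diamond|^{1/2}(m_\pm-I)\|_{L^4(dz)\otimes L^\infty(dx)}$ of \cite[Lemma 5.6]{DZ-2}, which is where $\mathcal{K}_4^2$ enters. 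You instead keep $F$ intact, use the Lax equation plus the $\ad\sigma$ algebra — your collapse of the $z$-term to $2izG_{21}(m_-^{-1}-I)_{12}(m_--I)_{12}$ is exactly right, since $[\ad\sigma(m_-^{-1}Gm_-)-m_-^{-1}\ad\sigma(G)m_-]_{12}=2(m_-^{-1})_{12}G_{21}(m_-)_{12}$ — and then remove the last factor of $z$ by a second integration by parts and solving the resulting identity $J=2J+(\text{$z$-free})$ for $J$; this is legitimate because for each fixed $z$ the integral $J$ is absolutely convergent (Schwartz $a$, bounded $q$ and $m_\pm$), so the boundary terms vanish and the algebra is valid. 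What each approach buys: yours needs only the unweighted $L^2(dz)$ and $L^4(dz)\otimes L^\infty(dx)$ resolvent bounds (hence still $\mathcal{K}_4^2$) and avoids the weighted estimates of \cite[Lemma 5.6]{DZ-2} altogether, which is attractive if one wants to minimize imported machinery; the paper's version avoids differentiating the quartic term and the self-referential identity, so its algebra is shorter. Your remaining estimates (Minkowski in $y$, Plancherel for the $(\partial_yG)_{12}$ term, $\|\cdot\|_{L^1(dy)}$ times $L^2$ or $L^4(dz)\otimes L^\infty$ norms of $m_\pm^{\pm1}-I$, and the observation that the diagonal products $QG$, $GQ$ only contribute through $(m_\pm-I)$-dressed terms) coincide with the paper's treatment of its $I_1$, $I_2$ and $zF^{(4)}$; the only points left implicit are routine — the vanishing of the boundary terms in both integrations by parts and the bookkeeping of the $\eta$- and $(1-\rho)$-powers that produce the symbol $\Twomat{l}{l+3}{l/2}{5l+11}$.
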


\begin{proof}
We use the decomposition of $F$ given by \eqref{eq:decomF}.
By using the estimate of $\|q_x\|_{L^2}$ given by \cite[Lemma 5.24]{DZ-2}, we get:
\begin{eqnarray*}
    \norm{z F^{(1)}}{L^2(dz)} =c \|\partial_x (a(x) |q|^l q)\|_{L^2} &\leq& c \|\partial_x a(x)\|_{L^2} \|q\|_{L^\infty}^{l+1} + c\|a(x)\|_{L^\infty} \|q\|_{L^\infty}^l \|\partial_x q\|_{L^2}\\
    &\leq& \Twomat{l+1}{l+1}{(l+1)/2}{5l+5}+\Twomat{l+1}{l}{l/2}{5l+1/2}\\
    &\leq& \Twomat{l+1}{l+1}{l/2}{5l+5}.
\end{eqnarray*}

The estimate of $zF^{(2)}$ is the same as $zF^{(3)}$. Integrating by parts on $zF^{(3)}$, we have $zF^{(3)}=I_1+I_2$ with
\begin{eqnarray*}
    \norm{I_{1}}{L^2(dz)} = \left \Vert \int e^{i \theta}G_y(m_--I)dy \right \Vert_{L^2} &\leq&c\|G_x\|_{L^1} \|m_--I\|_{L^2(dz) \otimes L^\infty(dx)} \\
    & \leq &c_1\|\partial_x a(x)\|_{L^1} \|q\|_{L^\infty}^{l+1}\|m_--I\|_{L^2(dz) \otimes L^\infty(dx)}\\
    &&+c_2\|a(x)\|_{L^2} \|q\|_{L^\infty}^l \|\partial_x q\|_{L^2}\|m_--I\|_{L^2(dz) \otimes L^\infty(dx)}\\
    &\leq&\Twomat{l+2}{l+1}{(l+1)/2}{5l+6}+\Twomat{l+2}{l}{l/2}{5l+3/2}\\
    &\leq&\Twomat{l+2}{l+1}{(l+1)/2}{5l+6}.
\end{eqnarray*}
\begin{eqnarray*}
   \norm{I_{2}}{L^2(dz)} = \left \Vert \int e^{i \theta}GQ(m_--I)dy \right \Vert_{L^2(dz)} &\leq&c\|GQ\|_{L^1} \|m_--I\|_{L^2(dz) \otimes L^\infty(dx)} \\
    & \leq & \|a(x)\|_{L^1} \|q\|_{L^\infty}^{l+2}\|m_--I\|_{L^2(dz) \otimes L^\infty(dx)}\\
  &  \leq & \Twomat{l+3}{l+2}{(l+2)/2}{5l+11}.
\end{eqnarray*}

By using the estimate of $\||\diamond|^{1/2}(m_--I)\|_{L^4(dz) \otimes L^\infty(dx)}$ given by \cite[Lemma 5.6]{DZ-2}

\begin{eqnarray*}
    \norm{z F^{(4)}}{L^2(dz)} &=&\left \Vert z\int e^{-i\theta}(m_-^{-1}-I)G(m_--I)dy \right \Vert_{L^2}\\
    &\leq&c\|G\|_{L^1} \||z|^{1/2}(m_-^{-1}-I)\|_{L^4(dz) \otimes L^\infty(dx)}\||z|^{1/2}(m_--I)\|_{L^4(dz) \otimes L^\infty(dx)} \\
    & \leq & \Twomat{l+3}{l+3}{(l+1)/2}{5l+8}\mathcal{K}_4^2.
\end{eqnarray*}

Thus we have

\[\|z F\|_{L^2} \leq \Twomat{l+1}{l+3}{l/2}{5l+11}\mathcal{K}_4^2\]
\end{proof} as desired.

\begin{lemma}The $L^2$ norm of the derivative of $F$ can be estimated as
\label{le:dF}
    \begin{equation}
        \|\partial_z F\|_{L^2} \leq \frac{C^{1,0}(\rho,\eta)}{(1+t)^{(l-1)/2}} 
    \end{equation}
    where $\mathcal{K}_4$ is given by \eqref{Kp}, and the constant $C^{1,0}_{\Delta F}(\rho, \eta)$ which is uniform in $x$ and $t$, and is monotonic in $\rho$ and $\eta$.
\end{lemma}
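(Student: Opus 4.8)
The plan is to mirror the structure of Lemma \ref{le:zF2}, again using the decomposition $F = F^{(1)} + F^{(2)} + F^{(3)} + F^{(4)}$ from \eqref{eq:decomF}, but now tracking the $z$-derivative rather than multiplication by $z$. First I would write $\partial_z F^{(1)} = P_{12}\int \partial_z(e^{i\theta}) G\, dy = P_{12}\int (iy) e^{i\theta} G\, dy$, so the $z$-derivative produces a factor of $y$ against the Schwarz function $a(y)|q|^l q$; since $a$ is Schwarz, $\|y\, a(y)|q|^l q\|_{L^2}$ is controlled by $\|q\|_{L^\infty}^{l}\|q\|_{L^2}$ up to constants, giving a bound of the matrix-symbol type $\Twomat{l+1}{l+1}{l/2}{5l+5}$ using the $\|q\|_{L^\infty}$ decay from \cite[(4.19)]{DZ-2} and the $\|q\|_{L^2}$ control. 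For $F^{(2)}$ and $F^{(3)}$, which are symmetric, the derivative falls either on $e^{i\theta}$ (again producing a harmless $y$ factor) or on the factor $m_- - I$ (resp. $m_-^{-1} - I$), and here I would invoke the estimates on $\partial_z m_\pm$, equivalently on $\partial_z \mu$ via \eqref{eq:boldQ} and the representation $m_\pm = \mu v^\pm_\theta$; the relevant $L^2(dz)\otimes L^\infty(dx)$ bounds on $\partial_z(m_- - I)$ are exactly of the kind supplied in \cite[Section 5]{DZ-2} (e.g.\ the analogues of \cite[Lemma 5.24, Lemma 5.6]{DZ-2}) together with the new $t$-uniform $L^\infty$ bounds from Proposition \ref{prop:mbound1} and Lemma \ref{lm:mbound2}. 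For $F^{(4)}$ one uses the product rule once more and the $\||\diamond|^{1/2}(m_\pm^{\mp 1}-I)\|_{L^4(dz)\otimes L^\infty(dx)}$ estimate together with $\mathcal{K}_4$ from Proposition \ref{prop:new apriori}.

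Concretely, the steps in order are: (1) apply $\partial_z$ to the integral representation and distribute it across the phase $e^{i\theta}$ and the two Beals--Coifman factors using the product rule; (2) on the terms where $\partial_z$ hits $e^{i\theta}$, absorb the resulting polynomial weight $y$ into the Schwarz function $a(y)$ and estimate as in Lemma \ref{le:FL2}, picking up one extra power of $\eta$ in the matrix symbol and the decay $(1+t)^{-(l-1)/2}$ (one half-power worse than $\|F\|_{L^2}$ because the $y$-weight consumes part of the $L^1\to L^2$ room in $q$); (3) on the terms where $\partial_z$ hits $m_- - I$ or $m_-^{-1} - I$, use the derivative bounds on $m_\pm$ in the mixed $L^2(dz)\otimes L^\infty(dx)$ norm, combined with $\|a(y)|q|^l q\|_{L^1}$; (4) for $F^{(4)}$, split the derivative between the two matrix factors and use the weighted $L^4$ bound on one factor and the $L^\infty$ bound on the other, inserting $\mathcal{K}_4^2$; (5) collect the worst matrix symbol among the four contributions, which I expect to be of the form $\Twomat{k}{l+3}{(l-1)/2}{\,\cdot\,}\mathcal{K}_4^2$, and read off $C^{1,0}(\rho,\eta)/(1+t)^{(l-1)/2}$.

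The main obstacle will be step (3): controlling $\partial_z(m_- - I)$ in the $L^2(dz)\otimes L^\infty(dx)$ norm \emph{uniformly in $t$} as $t\to\infty$. The naive bound on $\partial_z m_-$ coming from differentiating the singular integral equation \eqref{eq:mu} carries a factor that grows in $t$ (this is precisely the $t$-growth that \cite[Lemma 5.29, Lemma 5.36]{DZ-2} had to absorb by taking $l$ large); the improvement here is to route the derivative estimate through the $\dbar$-steepest-descent factorization \eqref{m:trans}, where $\partial_z m_\pm$ is expressed via $\partial_z E$, $\partial_z$ of the parabolic cylinder parametrix $P$, and $\partial_z \delta^{\sigma_3}$, and to use that $E - I = \mathcal{O}(t^{-1/4})$ with correspondingly small derivative (the argument of Lemma \ref{lm:ED} applied to $\partial_z$ of the Neumann series). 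The bookkeeping needed to show that differentiating $\delta$ and $P$ does not reintroduce uncontrolled $t$-powers — in particular handling the $\log t$ and $t^{-1/2-1/2q}$ terms that already appeared in the proof of Lemma \ref{lm:ED} — is where the real work lies; once those are in hand, the remaining estimates are routine applications of Minkowski's inequality and Hölder, exactly as in Lemmas \ref{le:FL2} and \ref{le:zF2}.
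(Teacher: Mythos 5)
There is a genuine gap, and it appears right at the start: the phase in \eqref{F} is $e^{-i(yz-tz^2)}$, so $\partial_z$ acting on it produces the factor $-i(y-2tz)$, not $iy$. The $y$ part can indeed be absorbed by the Schwartz factor $a(y)$ in $G$, but the $2tz$ part cannot, and it is precisely this term that threatens a linear $t$-growth (and a growth in the $L^2(dz)$ variable $z$). Your step (2) silently drops it. The way the paper (following Deift--Zhou) kills this term is structural, not term-by-term: one never differentiates the phase and the Beals--Coifman factors separately, but instead writes $\partial_z F$ as
$c_1\int e^{i\theta\,\mathrm{ad}\sigma}\bigl((\partial_z-\tilde L)m_-^{-1}\bigr)G\,dy
+c_2\int e^{i\theta\,\mathrm{ad}\sigma}G\,(\partial_z-\tilde L)m_-\,dy
-c_3\int e^{i\theta\,\mathrm{ad}\sigma}m_-^{-1}(\tilde L G)m_-\,dy$
with $\tilde L=ix\,\mathrm{ad}\sigma-2t\partial_x$, i.e.\ the dangerous $(y-2tz)$ factor is traded, by Leibniz and integration by parts in $y$, for $\tilde L$ hitting $G$ (harmless, because $x a(x)$ and $t\,\partial_x$ acting on the localized $G$ still give decay $(1+t)^{-(l-1)/2}$, cf.\ \eqref{est LG2}--\eqref{est:LG1}) and for the combinations $(\partial_z-\tilde L)m_\pm=(L+2tQ)m_\pm$, which are the objects with $t$-uniform bounds \eqref{eq:tildeLmu}--\eqref{eq:tildeLm}. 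The cancellation between the phase derivative and the derivatives of $m_\pm$ is the whole point, and differentiating the decomposition \eqref{eq:decomF} term by term, as you propose, never produces it.

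This also undermines your step (3): $\partial_z(m_--I)$ by itself is \emph{not} uniformly bounded in $t$ in $L^2(dz)\otimes L^\infty(dx)$, and no amount of bookkeeping through the $\dbar$-factorization \eqref{m:trans} will make it so, because $\partial_z$ of the parabolic cylinder parametrix $P(\sqrt{8t}(z-z_0))$ carries a factor $\sqrt{8t}$ and $\partial_z\delta^{\sigma_3}$ has a logarithmic singularity at $z_0$; only the combination $(\partial_z-\tilde L)m_\pm$ enjoys uniform estimates (this is exactly the content of \cite[Lemmas 5.17, 5.23]{DZ-2} and of \eqref{eq:tildeLm} in the appendix). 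Your proposal therefore mislocates the difficulty: the issue is not to improve the bound on $\partial_z m_\pm$, but to avoid ever needing it, by means of the $\tilde L$-commutation and the subsequent decomposition of the third term via $m_-=\tilde\mu\,\delta_-^{\sigma_3}v_\theta^{\#}$ into the pieces $P_{31},\dots,P_{34}$, which are then handled with the Fourier estimates on $\delta_-^{-2}-1$ and $\delta^2 r^2$ and the resolvent bounds of Proposition \ref{prop:new apriori}. As written, your argument would not close: both the phase term and the derivative term grow in $t$ when estimated separately.
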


\begin{proof}
Recall the operator $\tilde{L}:=i x \operatorname{ad} \sigma-2 t \partial_x$ given by \cite[(5.13)]{DZ-2}. We decompose $\partial_z F$ into:
\begin{align}
\label{eq: parzF}
    \partial_z F &=c_1 \int e^{i \theta \mathrm{ad} \sigma}(\partial_z - \tilde{L})m_-^{-1} G dy+c_2\int e^{i \theta \mathrm{ad} \sigma}G (\partial_z - \tilde{L})m_- dy - c_3\int e^{i \theta \mathrm{ad} \sigma}m_-^{-1} \tilde{L}G m_- dy \\
    \nonumber
    &= P_1+ P_2+P_3.
\end{align}

According to Appendix \ref{sec:suppesti} and Section \ref{sec:priori}, we can get:

\begin{eqnarray*}
\|P_1\|_{L^2(dz)} = \norm{\int e^{i \theta \mathrm{ad} \sigma}(\partial_z - \tilde{L})m_-^{-1} G dy}{L^{2}} & \leq & c\|G\|_{L_1(dx)}\|m^{-1}\|_{L^\infty(dz) \otimes L^\infty(dx)}\|(\partial_z -\tilde{L})m_-\|_{L^2(dz) \otimes L^\infty(dx)}\\&\leq& \Twomat{l+2}{l+1}{(l+1)/2}{5l+6}M_\infty\Twomat{1}{0}{0}{1}.
\end{eqnarray*}

The second term has the same estimate as $P_1$. Recall  $m_{-}=\tilde{\mu} \delta_{-}^{\sigma_3} v_\theta^{\#}$ given in \cite[(4.12)]{DZ-2}.  $P_3$ can be decomposed as:

\begin{eqnarray}
\label{eq: P3decomp}
\int e^{i \theta \mathrm{ad} \sigma}m_-^{-1} \tilde{L}G m_- dy  &=& -c \int e^{i \theta \mathrm{ad} \sigma}\left(v_\theta^{\#-1}\delta_-^{\sigma_3-1}\tilde{L}G\delta_-^{\sigma_3}v_\theta^{\#}\right) dy\\
\nonumber
&&-c \int e^{i \theta \mathrm{ad} \sigma}\left(v_\theta^{\#-1}\delta_-^{\sigma_3-1}(\tilde{\mu}^{-1}-I)\tilde{L}G\delta_-^{\sigma_3}v_\theta^{\#}\right) dy\\
\nonumber
&&-c \int e^{i \theta \mathrm{ad} \sigma}\left(v_\theta^{\#-1}\delta_-^{\sigma_3-1}\tilde{L}G(\tilde{\mu}-I)\delta_-^{\sigma_3}v_\theta^{\#}\right) dy\\
\nonumber
&&-c \int e^{i \theta \mathrm{ad} \sigma}\left(v_\theta^{\#-1}\delta_-^{\sigma_3-1}(\tilde{\mu}^{-1}-I)\tilde{L}G(\tilde{\mu}-I)\delta_-^{\sigma_3}v_\theta^{\#}\right) dy\\
\nonumber
&&=P_{31}+P_{32}+P_{33}+P_{34}.
\end{eqnarray}We will estimate each piece of these  four terms separately. 
\cite[(6.19), (6.23)]{DZ-2} gives

\begin{equation}
    \left\vert \int e^{isz}(\delta_-^{-2}(z,z_0)-1)dz \right\vert=\left\vert \int_{z_0}^\infty e^{isz}(\delta^{-2}(-2|r|^2+|r|^4))dz \right\vert \leq \frac{c\eta^2}{1-\rho}\frac{1}{1+|s|},
\end{equation}

and

\begin{equation}
    \left\vert \int_{z_0}^{\infty} e^{isz}\delta^2r(z)^2dz \right\vert \leq \frac{c\eta^2}{1-\rho}\frac{1}{1+|s|}.
\end{equation}

Using the inequality above, we obtain:

\begin{equation}
    \left\Vert \int h(x) e^{i z x}(\delta_-^{-2}(z,z_0)-1)dx \right\Vert_{L^2(dz)} \leq \frac{c\eta^2}{1-\rho}\|h\|_{L^p},\quad  1<p<2,
\end{equation}

\begin{equation}
\label{eq: Lpdelta2h}
    \left\Vert \int h(x) e^{i z x}\delta^2(z,z_0)r(z)^2 \chi\left\{x<2\diamond t\right\} dx \right\Vert_{L^2(dz)} \leq \frac{c\eta^2}{1-\rho}\|h\|_{L^p}, \quad 1<p<2.
\end{equation}

So using conclusions from Appendix \ref{sec:suppesti}, we arrive at

\begin{eqnarray}
\label{eq: I31}
\|P_{31}\|_{L^2(dz)} &=& \left\Vert \int e^{i \theta \mathrm{ad} \sigma}\left(v_\theta^{\#-1}\delta_-^{\sigma_3-1}\tilde{L}G\delta_-^{\sigma_3}v_\theta^{\#}\right) dy \right\Vert_{L^2}\\
\nonumber
&=& \left\Vert\int (\tilde{L}G)_{12}e^{-i\theta}\delta_-^{-2}dx-\int_{x<2tz}(\tilde{L}G)_{21}r^2e^{i\theta}\delta^2dx\right\Vert_{L^2}\\
\nonumber
&\leq& c\|\tilde{L}G\|_{L^2}+\frac{c\eta^2}{1-\rho}\|\tilde{L}G\|_{L^p}+\frac{c\eta^2}{1-\rho}\|\tilde{L}G\|_{L^p}\\
\nonumber
&\leq& c \frac{C_{LG2}(\rho, \eta)}{(1+t)^{(l-1)/2}} +c\frac{C_{LG2}^{2-2/p} C_{LG1}^{2/p-1}}{(1+t)^{(l-1)/2}}.
\end{eqnarray}

The other three parts could be calculated directly:

\begin{eqnarray*}
\|P_{32}\|_{L^2(dz)}=\left\Vert\int e^{i \theta \mathrm{ad} \sigma}\left(v_\theta^{\#-1}\delta_-^{\sigma_3-1}(\tilde{\mu}^{-1}-I)\tilde{L}G\delta_-^{\sigma_3}v_\theta^{\#}\right) dy \right\Vert_{L^2} &=& \frac{c}{1-\rho} \|\tilde{L}G\|_{L^1(dx)}\|\tilde{\mu}-I\|_{L^2(dz) \otimes L^\infty(dx)}\\
&\leq& \frac{c}{1-\rho} \frac{C_{LG1}(\rho, \eta)}{(1+t)^{(l-1)/2}} \Twomat{1}{0}{1/4}{2}\mathcal{K}_2.
\end{eqnarray*}

\begin{eqnarray*}
\|P_{33}\|_{L^2(dz)}=\left\Vert \int e^{i \theta \mathrm{ad} \sigma}\left(v_\theta^{\#-1}\delta_-^{\sigma_3-1}\tilde{L}G(\tilde{\mu}-I)\delta_-^{\sigma_3}v_\theta^{\#}\right) dy \right\Vert_{L^2} &=& \frac{c}{1-\rho} \|\tilde{L}G\|_{L^1(dx)}\|\tilde{\mu}-I\|_{L^2(dz) \otimes L^\infty(dx)}\\
&\leq&  \frac{c}{1-\rho} \frac{C_{LG1}(\rho, \eta)}{(1+t)^{(l-1)/2}} \Twomat{1}{0}{1/4}{2}\mathcal{K}_2.
\end{eqnarray*}

\begin{eqnarray*}
\|P_{34}\|_{L^2(dz)}=\left\Vert \int e^{i \theta \mathrm{ad} \sigma}\left(v_\theta^{\#-1}\delta_-^{\sigma_3-1}(\tilde{\mu}^{-1}-I)\tilde{L}G(\tilde{\mu}-I)\delta_-^{\sigma_3}v_\theta^{\#}\right) dy \right\Vert_{L^2} &=& \frac{c}{1-\rho} \|\tilde{L}G\|_{L^1(dx)}\|\tilde{\mu}-I\|_{L^4(dz) \otimes L^\infty(dx)}^2\\
&\leq&  \frac{c}{1-\rho} \frac{C_{LG1}(\rho, \eta)}{(1+t)^{(l-1)/2}} \Twomat{2}{0}{1/4}{4}\mathcal{K}_4^2.
\end{eqnarray*}
Putting estimates together, we obtain the estimate for $P_3$, and then by the estimates for $P_1$ and $P_2$ the desired estimate follows.
\end{proof}
Three lemmas above give us the {\it a priori} estimates for the fixed point argument. Next, we show lemmas to estimate the difference of two solutions.

\begin{lemma}The $L^2$ norm of the difference can be bounded as
\label{le:DF}
    \begin{equation}
        \|\Delta F\|_{L^2} \leq \Twomat{l}{l+3}{\frac{l}{2}+\frac{1}{2p}+\frac{1}{4}}{5l+8}\mathcal{K}_4^3\|\Delta r\|_{H^{1,1}} := \frac{C^{0,0}_{\Delta F}(\rho, \eta)}{(1+t)^{\frac{l}{2}+\frac{1}{2p}+\frac{1}{4}}} \|\Delta r\|_{H^{1,1}}
    \end{equation}
    where $\mathcal{K}_4$ and $\mathcal{K}_p$ are given by \eqref{Kp} and $p>2$, and the constant $C^{0,0}_{\Delta F}(\rho, \eta)$ is uniform in $x$ and $t$ but  monotonic in $\rho$ and $\eta$.
\end{lemma}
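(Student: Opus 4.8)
The plan is to mimic the $L^2$-estimate of $F$ in Lemma \ref{le:FL2} but now track the dependence on $\Delta r$, using the difference identity
\[
\Delta F = P_{12}\int e^{i\theta}\,\Delta\!\left(m_-^{-1} G m_-\right) dy,
\]
and then expand $\Delta(m_-^{-1}G m_-)$ by the Leibniz rule into three groups: one where $\Delta$ hits $m_-^{-1}$ (with $G$ and $m_-$ evaluated at one of the two reflection coefficients), one where $\Delta$ hits $G=G(q)=G(\mathcal{R}^{-1}(e^{-iz^2t}r))$, and one where $\Delta$ hits $m_-$. As in \eqref{eq:decomF}, I would further split each $m_-^{\pm 1}$ (or its $\Delta$) into $I$ plus a decaying remainder, so that $\Delta F$ becomes a finite sum of terms each of which is an integral over $y$ of a product of: the perturbation amplitude $a(x)$, a power of $q$ (or of $\Delta q$ via $|q_2|^l q_2 - |q_1|^l q_1$), and at most two factors drawn from $\{m_-^{\pm1}-I,\ \Delta m_-^{\pm1}\}$.

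First I would record the elementary Lipschitz bound $\big|\,|q_2|^l q_2-|q_1|^l q_1\,\big|\le c\,(|q_1|^l+|q_2|^l)\,|q_2-q_1|$, so that after pulling out $\|a\|_{L^1}$ or $\|a\|_{L^2}$ via Minkowski, each term involving $\Delta G$ contributes $\|q\|_{L^\infty}^{l}\,\|\Delta q\|_{L^2}$ or $\|q\|_{L^\infty}^{l}\|\Delta q\|_{L^1}$. The factors $\|q\|_{L^\infty}$ are controlled by \cite[(4.19)]{DZ-2}, which carries the decay $(1+t)^{-1/2}$ and the $\Twomat{\cdot}{\cdot}{\cdot}{\cdot}$ bookkeeping; the factors $\|m_-^{\pm1}-I\|_{L^2(dz)\otimes L^\infty(dx)}$ and $\|m_-^{\pm1}-I\|_{L^4(dz)\otimes L^\infty(dx)}$ come from \cite[Lemma 5.6, Lemma 5.17]{DZ-2} (or Proposition \ref{prop:mbound1}), each contributing a factor $\mathcal K_p$; and the genuinely new factors $\|\Delta m_-^{\pm 1}\|_{L^2(dz)\otimes L^\infty(dx)}$ come from the last lemma of Section \ref{sec:dbar}, which gives $|\Delta m_\pm|\le C_{\Delta m}(\rho,\eta)\,\big|1+(1+z)\ln|z/(1+z)|\big|\,\|\Delta r\|_{H^{1,1}_1}$; the weight $1+(1+z)\ln|z/(1+z)|$ is $L^2(dz)$ near $z=z_0$ (it is bounded and the logarithmic singularity is integrable), so these factors contribute $C_{\Delta m}(\rho,\eta)\|\Delta r\|_{H^{1,1}}$. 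Likewise $\|\Delta q\|_{L^2}\lesssim \|\Delta r\|_{H^{1,1}}$ by the Lipschitz continuity of $\mathcal R^{-1}$ from \cite{DZ-3}, with the appropriate $(1+t)^{-\alpha}$ decay that one reads off from \cite[Section 5]{DZ-2}.

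Then I would assemble the worst term: with one $\Delta q$ (or $\Delta m_-$) and the remaining two matrix factors taken from the slowest-decaying choice, the powers of $(1+t)$ add up to the exponent $l/2 + 1/2p + 1/4$ claimed — the $l/2$ from $l$ powers of $\|q\|_{L^\infty}$ minus the half-power that $\Delta q$ ``costs'' relative to $q$, $1/2p$ from the $L^{2p}$-type bound on the $m_--I$ factor forced by the $L^2(dz)$ pairing, and $1/4$ from the $L^4$-in-$z$ decay of the remaining $m_--I$ factor — and the $\rho,\eta$-dependence is absorbed into $\Twomat{l}{l+3}{\cdot}{5l+8}\mathcal K_4^3$ (three powers of $\mathcal K$: two from $m_-^{\pm1}-I$ estimates and one from $\Delta m_-$, since the resolvent bound $\mathcal K_p$ enters each such estimate). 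The main obstacle is the pairing of the $L^2(dz)$-norm with the $y$-integral: one must apply Minkowski's integral inequality in the correct order and allocate the Lebesgue exponents in $z$ so that every factor lands in a space where an a~priori bound is available (this forces the $1/2p$ loss and the use of $\mathcal K_p$ rather than $\mathcal K_2$), and simultaneously check that the logarithmic weight attached to $\Delta m_-$ is harmless after integration in $z$. Everything else is the same bookkeeping as in Lemma \ref{le:FL2}, just with one extra $\Delta$ distributed by the product rule and tracked through the $\Twomat{\cdot}{\cdot}{\cdot}{\cdot}$ calculus.
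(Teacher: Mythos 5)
Your overall strategy --- distribute $\Delta$ by the Leibniz rule over the four-term decomposition \eqref{eq:decomF} and bound each piece by H\"older/Minkowski with the known bounds on $q$, $m_\pm-I$, $\Delta m_\pm$ --- is the same as the paper's, and your treatment of the pieces carrying $\Delta m_\pm$ (decay supplied by $\|G\|_{L^1}\sim (1+t)^{-(l+1)/2}$, with the $\mathcal{K}_4^3$ count for the quadratic term) matches what is done there. The genuine gap is in the terms where $\Delta$ falls on $G$, above all the leading term $\Delta F^{(1)}=P_{12}\int e^{i\theta}\,\Delta G\,dy$, whose $L^2(dz)$ norm is, by Plancherel, just $c\|\Delta G\|_{L^2(dx)}$ and contains \emph{no} $m_\pm-I$ factor at all. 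You propose to bound $\Delta G$ by $\|q\|_{L^\infty}^{l}\|\Delta q\|_{L^2}$ (or $\|\Delta q\|_{L^1}$) and to control $\|\Delta q\|_{L^2}$ by the Lipschitz continuity of $\mathcal{R}^{-1}$ ``with the appropriate $(1+t)^{-\alpha}$ decay''; but the unweighted $L^2$ norm of the difference of two solutions carries no time decay (bi-Lipschitz continuity of the scattering maps gives a $t$-uniform bound, nothing better), so this allocation of exponents yields only $(1+t)^{-l/2}$, short of the claimed $(1+t)^{-l/2-1/2p-1/4}$. Your bookkeeping for the exponent compounds this: you attribute the extra $1/2p$ and $1/4$ to $m_--I$ factors, which are simply absent from $\Delta F^{(1)}$.

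The ingredient you are missing is the sup-norm difference estimate for the potential: the paper keeps $a$ in $L^2$ (resp.\ $L^1$) and puts the difference in $L^\infty$, i.e.\ $\|\Delta G\|_{L^2}\le \|a\|_{L^2}\|q\|_{L^\infty}^{l}\|\Delta q\|_{L^\infty}$, and then uses $\|\Delta q\|_{L^\infty}\lesssim (1+t)^{-1/2p-1/4}\,\mathcal{K}_p\,\|\Delta r\|_{H^{1,1}}$ from \cite[Lemma 4.16, (4.21)]{DZ-2}; it is this bound, not any $m_--I$ factor, that produces the $1/2p+1/4$ and the $\mathcal{K}_p$ in the statement. (Your side remark that the logarithmic weight $|1+(1+z)\ln|z/(1+z)||$ attached to the pointwise $\Delta m_\pm$ bound is square-integrable in $z$ is correct, but the paper instead quotes $\|\Delta m_-\|_{L^2(dz)\otimes L^\infty(dx)}$ from \cite[Lemma 5.12]{DZ-2}; this is a cosmetic difference.) With the $L^\infty$ difference bound on $q$ inserted in every $\Delta G$ term, the rest of your argument goes through and reproduces the stated rate; without it, the worst term decays too slowly and the lemma is not proved.
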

\begin{proof}
    Decomposing $F$ into four parts as given by \eqref{eq:decomF}, we estimate $I_i:= \Delta F^{(i)}$, $i=1,...4$ term by term:
    \begin{eqnarray*}
        \norm{I_1}{L^2 (dz)} = c\|\Delta G\|_{L^2} &\leq&c\|a(x)\|_{L^2}\|q\|_{L^\infty}^{l}\|\Delta q\|_{L^\infty}\\
        &\leq&\Twomat{l}{l+3}{\frac{l}{2}+\frac{1}{2p}+\frac{1}{4}}{5l+7}\mathcal{K}_p\|\Delta r\|_{H^{1,1}}
    \end{eqnarray*}
    where we used the estimate of $\|\Delta q\|_{L^\infty}$ and $\|\Delta m\|_{L^2}$ in \cite[Lemma 4.16 and Lemma 5.12]{DZ-2}.
    \begin{eqnarray*}
       \norm{I_2}{L^2 (dz)} & = & c_1\|\Delta G\|_{L^1} \|m_--I\|_{L^2(dz)\otimes L^\infty(dx)}+ c_2\|G\|_{L^1} \|\Delta m_-\|_{L^2(dz)\otimes L^\infty(dx)}\\
        &\leq&\|a(x)\|_{L^1}\|q\|_{L^\infty}^{l}\|\Delta q\|_{L^\infty}\|m_--I\|_{L^2(dz)\otimes L^\infty(dx)}+\|a(x)\|_{L^1}\|q\|_{L^\infty}^{l+1}\|\Delta m_-\|_{L^2(dz)\otimes L^\infty(dx)}\\
        &\leq&\Twomat{l+1}{l+3}{\frac{l}{2}+\frac{1}{2p}+\frac{1}{4}}{5l+8}\mathcal{K}_p\|\Delta r\|_{H^{1,1}}+\Twomat{l+1}{l+2}{(l+1)/2}{5l+7}\|\Delta r\|_{H^{1,1}}\\
        &\leq&\Twomat{l+1}{l+3}{\frac{l}{2}+\frac{1}{2p}+\frac{1}{4}}{5l+8}\mathcal{K}_p\|\Delta r\|_{H^{1,1}}.
    \end{eqnarray*}

    The estimate of
    \[ \norm{I_3}{L^2 (dz)} = c_1\|\Delta G\|_{L^1} \|m_-^{-1}-I\|_{L^2(dz)\otimes L^\infty(dx)}+ c_2\|G\|_{L^1} \|\Delta m_-^{-1}\|_{L^2(dz)\otimes L^\infty(dx)}\]
    is similar to $I_2$.
    
    \begin{eqnarray*}
        \norm{I_4}{L^2 (dz)} & = & c_1\|\Delta m_-^{-1}\|_{L^4(dz)\otimes L^\infty(dx)}\|G\|_{L^1} \|m_--I\|_{L^4(dz)\otimes L^\infty(dx)}\\
        &&+ c_2\|m_-^{-1}-I\|_{L^4(dz)\otimes L^\infty(dx)}\|\Delta G\|_{L^1} \|m_--I\|_{L^4(dz)\otimes L^\infty(dx)}\\
        &&+ c_3\|m_-^{-1}-I\|_{L^4(dz)\otimes L^\infty(dx)}\|G\|_{L^1} \|m_-^{-1}\|_{L^4(dz)\otimes L^\infty(dx)}\\
        &\leq&\Twomat{l+2}{l+2}{(l+1)/2}{5l+5}\mathcal{K}_4^3\|\Delta r\|_{H^{1,1}}+\Twomat{l+2}{l+3}{\frac{l}{2}+\frac{1}{2p}+\frac{1}{4}}{5l+7}\mathcal{K}_4^3\|\Delta r\|_{H^{1,1}}\\
        &&+\Twomat{l+2}{l+2}{(l+1)/2}{5l+5}\mathcal{K}_4^3\|\Delta r\|_{H^{1,1}}\\
        &\leq&\Twomat{l+2}{l+3}{\frac{l}{2}+\frac{1}{2p}+\frac{1}{4}}{5l+5}\mathcal{K}_4^3\|\Delta r\|_{H^{1,1}}.
    \end{eqnarray*}

    And finally we have:

    \[\|\Delta F\|_{L^2} \leq I_1+I_2+I_3+I_4 \leq \Twomat{l}{l+3}{\frac{l}{2}+\frac{1}{2p}+\frac{1}{4}}{5l+8}\mathcal{K}_4^3\|\Delta r\|_{H^{1,1}}\]as claimed.
\end{proof}

\begin{lemma}The weighted $L^2$ norm of the difference can be estimated
\label{le:DzF}
    \begin{equation}
        \|\Delta (z F)\|_{L^2} \leq \Twomat{l}{l+5}{\frac{l}{2}+\frac{1}{2p}-\frac{1}{4}}{5l+13}\mathcal{K}_p\mathcal{K}_4^3\|\Delta r\|_{H^{1,1}} \leq \frac{C^{0,1}_{\Delta  F}(\rho, \eta)}{(1+t)^{\frac{l}{2}+\frac{1}{2p}-\frac{1}{4}}}
    \end{equation}
    where $\mathcal{K}_4$ is given by \eqref{Kp} and $p>2$ and the constant $C^{0,1}_{\Delta F}(\rho, \eta)$ is uniform in $x$ and $t$ and are monotonic in $\rho$ and $\eta$.
\end{lemma}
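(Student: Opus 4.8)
The plan is to follow the same template as the proofs of Lemma \ref{le:zF2} and Lemma \ref{le:DF}: decompose $F$ into the four pieces $F^{(1)},\dots,F^{(4)}$ of \eqref{eq:decomF}, convert the weight $z$ into a $y$-derivative on the integrand via $z e^{-iyz}=i\,\partial_y e^{-iyz}$ and integrate by parts, and then apply the difference operator $\Delta$ through the Leibniz-type identities $\Delta(AB)=(\Delta A)B_2+A_1(\Delta B)$ and the analogous three-term rule for triple products. Each resulting term is then estimated by H\"older's inequality, feeding in the difference estimates $\|\Delta q\|_{L^\infty}$, $\|\Delta q_x\|_{L^2}$, $\|\Delta m_\pm\|_{L^2(dz)\otimes L^\infty(dx)}$, $\||\diamond|^{1/2}\Delta m_\pm\|_{L^4(dz)\otimes L^\infty(dx)}$ from \cite[Lemma 4.16, Lemma 5.12]{DZ-2} and Section \ref{sec:dbar}, together with the resolvent bounds $\mathcal{K}_p$, $\mathcal{K}_4$ of Proposition \ref{prop:new apriori}. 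First I would record the elementary consequences $\|\Delta G\|_{L^1\cap L^2}\leq c\|a(x)\|\,\|q\|_{L^\infty}^l\|\Delta q\|_{L^\infty}$, $\|\partial_x\Delta G\|_{L^1}\leq c\|q\|_{L^\infty}^l\big(\|\Delta q_x\|_{L^2}+\|\Delta q\|_{L^\infty}\big)$, and $\|\Delta(GQ)\|_{L^1}\leq c\|a(x)\|_{L^1}\|q\|_{L^\infty}^{l+1}\|\Delta q\|_{L^\infty}$, each carrying the decay $(1+t)^{-l/2}$ from the factor $\|q\|_{L^\infty}^l$ supplied by \cite[(4.19)]{DZ-2}, multiplied by the extra decay of the $\Delta$-factor.

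Second, I would handle the four pieces. For $zF^{(1)}$ integration by parts gives $c\,\partial_x\Delta G$ after applying $\Delta$, estimated directly by the bound on $\partial_x\Delta G$ above. The term $\Delta(zF^{(2)})$ has the same estimate as $\Delta(zF^{(3)})$, so only the latter is treated: exactly as in Lemma \ref{le:zF2}, integration by parts writes $zF^{(3)}$ as a sum of $\int e^{i\theta}G_y(m_--I)\,dy$ and $\int e^{i\theta}GQ(m_--I)\,dy$, and $\Delta$ distributes onto $G_y$, $GQ$ and onto $m_--I$; each summand is bounded by H\"older with the $L^1$ bounds on $\Delta G_x$, $\Delta(GQ)$ and the $L^2(dz)\otimes L^\infty(dx)$ bounds on $m_--I$ and $\Delta m_-$. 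For $\Delta(zF^{(4)})$ I would split $z=|z|^{1/2}\cdot|z|^{1/2}$ onto the two factors $m_-^{-1}-I$ and $m_--I$ as in Lemma \ref{le:zF2}, then apply the three-term $\Delta$-Leibniz rule for the triple product $(m_-^{-1}-I)\,G\,(m_--I)$, using $\||\diamond|^{1/2}(m_\pm-I)\|_{L^4(dz)\otimes L^\infty(dx)}$, its $\Delta$-analogue, and $\|\Delta G\|_{L^1}$; this is where the factor $\mathcal{K}_4^3$ is produced, while the $\mathcal{K}_p$ comes from the $\|\Delta q\|_{L^\infty}$ factor appearing through $\Delta G$.

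Third, the bookkeeping step: collect, for every term, the worst power of $\eta$, of $(1+\eta)$, of $(1-\rho)^{-1}$ (tracking where $\mathcal{K}_p$, $\mathcal{K}_4$ enter), and the worst power of $(1+t)^{-1}$; the dominant $t$-rate is $\tfrac{l}{2}+\tfrac{1}{2p}-\tfrac14$, obtained as the rate $\tfrac{l}{2}+\tfrac{1}{2p}+\tfrac14$ of $\|\Delta F\|_{L^2}$ from Lemma \ref{le:DF} with the weight $z$ costing one half power of $t$ (one fewer power from the integration by parts / one less factor of $t^{-1/2}$ available). Rewriting the outcome in the $\Twomat{\cdot}{\cdot}{\cdot}{\cdot}$ notation yields the asserted bound $\Twomat{l}{l+5}{\frac{l}{2}+\frac{1}{2p}-\frac14}{5l+13}\mathcal{K}_p\mathcal{K}_4^3\|\Delta r\|_{H^{1,1}}$, and hence $\dfrac{C^{0,1}_{\Delta F}(\rho,\eta)}{(1+t)^{l/2+1/2p-1/4}}\|\Delta r\|_{H^{1,1}}$ with $C^{0,1}_{\Delta F}$ uniform in $x,t$ and monotone in $\rho,\eta$.

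The main obstacle is the simultaneous presence of the weight $z$, which forces the integration by parts and so generates the $G_x$ and $GQ$ terms whose differences must also be controlled, and the difference operator $\Delta$, which requires one extra derivative of regularity on $q$ — this is exactly where the $H^{1,1}$ norm of $r$ (through $\|\Delta q_x\|_{L^2}$) is consumed. The delicate point, handled precisely as in Lemma \ref{le:zF2}, is to keep the full factor $\|q\|_{L^\infty}^l\sim(1+t)^{-l/2}$ intact throughout — in particular, in the integration-by-parts term one must arrange that the derivative falls on the Schwartz weight $a(x)$ rather than costing a power of $q$ — since it is this full $l$-fold decay that drives the threshold down to $l>3$.
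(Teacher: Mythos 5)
Your proposal is correct and follows essentially the same route as the paper: the same four-piece decomposition \eqref{eq:decomF}, integration by parts (Plancherel) to trade the weight $z$ for $\partial_x G$ and $GQ$ terms, the $|z|^{1/2}\cdot|z|^{1/2}$ split on $F^{(4)}$, distribution of $\Delta$ by the Leibniz rule, and the same input bounds from \cite{DZ-2} and Section \ref{sec:dbar} producing the $\mathcal{K}_p\mathcal{K}_4^3$ factor and the rate $\frac{l}{2}+\frac{1}{2p}-\frac14$. The only refinement worth noting is that the paper controls $\Delta\partial_x q$ not by a single norm but via the two-part splitting $\mathrm{I}+\mathrm{II}$ of \cite[Lemma 5.37]{DZ-2} (one piece in $L^\infty$, one in $L^2$), which is the precise form of the "$\|\Delta q_x\|$" estimate you invoke.
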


\begin{proof}
   As is given by \eqref{eq:decomF}, the first part of  $\|\Delta  (z F^{(1)})\|_{L^2}$ is equal to $\|\Delta \partial_xG\|_{L^2}$ according to the Plancherel's theorem. Here we need the estimate of  $\Delta \partial_x q$  which is divided into two parts $\textrm{I}$ and $\textrm{II}$ as is given in \cite[Lemma 5.37]{DZ-2}.

    \begin{eqnarray*}
        \|\Delta(z F^{(1)})\|_{L^2} &=& c\|\Delta \partial_xG\|_{L^2}\\
        \nonumber
        &\leq& c_1\|\partial_x a(x)\|_{L^2} \|q\|_{L^\infty}^l\|\Delta q\|_{L^\infty}+c_2\|a(x)\|_{L^\infty} \|q\|_{L^\infty}^{l-1}\|\partial_x q\|_{L^2}\|\Delta q\|_{L^\infty}\\
        \nonumber
        &&+c_3\|a(x)\|_{L^2} \|q\|_{L^\infty}^l\|\textrm{I}\|_{L^\infty}+c_4\|a(x)\|_{\infty} \|q\|_{L^\infty}^l\|\textrm{II}\|_{L^2}\\
        \nonumber
        &\leq& \Twomat{l}{l+3}{\frac{l}{2}+\frac{1}{2p}+\frac{1}{4}}{5l+7}\mathcal{K}_p\|\Delta r\|_{H^{1,1}}+\Twomat{l}{l+2}{\frac{l}{2}+\frac{1}{2p}-\frac{1}{4}}{5l+\frac{15}{2}}\mathcal{K}_p\|\Delta r\|_{H^{1,1}}\\
        \nonumber
        &&+\Twomat{l+1}{l+4}{\frac{l}{2}+\frac{1}{2p}}{5l+8}\mathcal{K}_p\|\Delta r\|_{H^{1,1}}+\Twomat{l}{l}{\frac{l}{2}}{5l+2}\|\Delta r\|_{H^{1,1}}\\
        \nonumber
        &\leq& \Twomat{l}{l+4}{\frac{l}{2}+\frac{1}{2p}-\frac{1}{4}}{5l+8}\mathcal{K}_p\|\Delta r\|_{H^{1,1}}.
    \end{eqnarray*}
 We then analyze $\|\Delta (z F^{(3)})\|$ since the estimate of $\|\Delta (z F^{(2)})\|$ is similar. Decomposing it as we have done in the proof of Lemma \ref{le:zF2}:

    \begin{eqnarray*}
         \norm{I_1}{L^2 (dz)}=\left\Vert \int e^{-i\theta}(\Delta \partial_y G)(m_--I) dy\right\Vert_{L^2} &\leq & \|\Delta \partial_xG\|_{L^1(dx)}\|m_--I\|_{L^2(dz)\otimes L^\infty(dx)}\\
        &\leq& \Twomat{l+1}{l+4}{\frac{l}{2}+\frac{1}{2p}-\frac{1}{4}}{5l+9}\mathcal{K}_p\|\Delta r\|_{H^{1,1}}
    \end{eqnarray*}
    where  the estimate of $\|\Delta \partial_xG\|_{L^1(dx)}$ is the same as that of $\|\Delta \partial_xG\|_{L^2(dx)}$. Then
    \begin{eqnarray*}
        \norm{I_2}{L^2 (dz)}=\left\Vert \int e^{-i\theta}(\partial_y G)\Delta m_- dy\right\Vert_{L^2} &\leq & \|\partial_xG\|_{L^1(dx)}\|\Delta m_-\|_{L^2(dz)\otimes L^\infty(dx)}\\
        &\leq& \Twomat{l+1}{l+2}{(l+1)/2}{5l+5}\mathcal{K}_2^2\|\Delta r\|_{H^{1,1}}+\Twomat{l+1}{l+1}{l/2}{5l+1/2}\mathcal{K}_2^2\|\Delta r\|_{H^{1,1}}\\
        &\leq& \Twomat{l+1}{l+2}{l/2}{5l+5}\mathcal{K}_2^2\|\Delta r\|_{H^{1,1}}
    \end{eqnarray*}
    and 
    \begin{eqnarray*}
         \norm{I_3}{L^2 (dz)}=\left\Vert \int e^{-i\theta}(\Delta (GQ))(m_--I)dy \right\Vert_{L^2} &\leq & \|\Delta (GQ)\|_{L^1(dx)}\|m_--I\|_{L^2(dz)\otimes L^\infty(dx)}\\
        &\leq& \Twomat{l+2}{l+4}{\frac{l}{2}+\frac{1}{2p}+\frac{3}{4}}{5l+13}\mathcal{K}_p\|\Delta r\|_{H^{1,1}}
    \end{eqnarray*}
    and 
    \begin{eqnarray*}
         \norm{I_4}{L^2 (dz)}=\left\Vert \int e^{-i\theta}(GQ)(\Delta m_-)dy \right\Vert_{L^2} &\leq & \|GQ\|_{L^1(dx)}\|\Delta m_-\|_{L^2(dz)\otimes L^\infty(dx)}\\
        &\leq& \Twomat{l+2}{l+3}{\frac{l+2}{2}}{5l+10}\mathcal{K}_2^2\|\Delta r\|_{H^{1,1}}.
    \end{eqnarray*}

    We finally turn to $\Delta (zF^{(4)})$:

    \begin{eqnarray*}
         \norm{I_5}{L^2 (dz)} & =& \left\Vert z \int e^{-i\theta}(m_-^{-1}-I)G(\Delta m_-) dy\right\Vert_{L^2}\\
        &\leq & \|G\|_{L^1(dx)}\||\diamond|^{1/2}(m_-^{-1}-I)\|_{L^4(dz)\otimes L^\infty(dx)}\||\diamond|^{1/2}\Delta m_-\|_{L^4(dz)\otimes L^\infty(dx)}\\
        &\leq& \Twomat{l+2}{l+4}{(l+1)/2}{5l+7}\mathcal{K}_4^3\|\Delta r\|_{H^{1,1}}.
    \end{eqnarray*}

    \begin{eqnarray*}
         \norm{I_7}{L^2 (dz)} & =& \left\Vert z \int e^{-i\theta}(m_-^{-1}-I)\Delta G(m_--I) dy\right\Vert_{L^2}\\
        &\leq & \|\Delta G\|_{L^1(dx)}\||z|^{1/2}(m_-^{-1}-I)\|_{L^4(dz)\otimes L^\infty(dx)}\||z|^{1/2}(m_--I)\|_{L^4(dz)\otimes L^\infty(dx)}\\
        &\leq& \Twomat{l+2}{l+5}{\frac{l}{2}+\frac{1}{2p}+\frac{1}{4}}{5l+9}\mathcal{K}_p\mathcal{K}_4^2\|\Delta r\|_{H^{1,1}}.
    \end{eqnarray*}

\end{proof}

\begin{lemma}The $L^2$ norm of the difference of the derivative of $F$ can be estimated as:
\label{le:dDF}
    \begin{equation}
        \|\Delta \partial_z F\|_{L^2} \leq \frac{C^{1,0}_{\Delta F}(\rho, \eta)}{(1+t)^{\frac{l}{2}+\frac{1}{2p}-\frac{3}{4}}}\|\Delta r\|_{H^{1,1}}.
    \end{equation}
    where $\mathcal{K}_4$ is given by \eqref{Kp}, $p>2$ and the constant $C^{1,0}_{\Delta F}(\rho, \eta)$ is uniform in $x$ and $t$ and are monotonic in $\rho$ and $\eta$.
\end{lemma}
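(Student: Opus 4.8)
The plan is to mirror the proof of Lemma \ref{le:dF}, applying the difference operator $\Delta$ to the decomposition \eqref{eq: parzF} and carrying the extra factor $\|\Delta r\|_{H^{1,1}}$ through every estimate. Write $\Delta\partial_z F=\Delta P_1+\Delta P_2+\Delta P_3$ with $P_1,P_2,P_3$ as in \eqref{eq: parzF}, and recall $\tilde L=ix\operatorname{ad}\sigma-2t\partial_x$.

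For $\Delta P_1$ and $\Delta P_2$ I would use the Leibniz-type identity for the difference operator, e.g. $\Delta(ABC)=(\Delta A)B_2C_2+A_1(\Delta B)C_2+A_1B_1(\Delta C)$, to split each into three pieces. In one piece $\Delta$ hits the localized factor $G$, which is controlled by $\|\Delta G\|_{L^1}\lesssim\|a\|_{L^1}\|q\|_{L^\infty}^l\|\Delta q\|_{L^\infty}$ together with the $\|\Delta q\|_{L^\infty}$ bound of \cite[Lemma 4.16]{DZ-2}; in another $\Delta$ hits $m_-^{\mp1}$, controlled by $\|\Delta m_-\|_{L^2(dz)\otimes L^\infty(dx)}$ from \cite[Lemma 5.12]{DZ-2}; in the last $\Delta$ hits $(\partial_z-\tilde L)m_-^{\mp1}$, controlled by the $\Delta$-analogue of the $(\partial_z-\tilde L)m_-$ bound from Appendix \ref{sec:suppesti}. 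Each term carries the Schwartz decay of $a$, the uniform $L^\infty$ bound $M_\infty$ coming from Proposition \ref{prop:new apriori} together with Lemma \ref{lm:mbound2}, and the appropriate power of $(1+t)^{-1}$; adding them gives a contribution of size $(1+t)^{-(l/2+1/2p-3/4)}\|\Delta r\|_{H^{1,1}}$.

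The delicate term is $\Delta P_3$. I would first apply $\Delta$ to the four-fold decomposition \eqref{eq: P3decomp} obtained by inserting $m_-=\tilde\mu\,\delta_-^{\sigma_3}v_\theta^\#$, so $\Delta P_3=\sum_{j=1}^4\Delta P_{3j}$. For $\Delta P_{31}$ the $L^2$ bound comes from $\|\Delta(\tilde L G)\|_{L^2}$, and for the $\delta^2r^2$ oscillatory piece from the $L^p$ ($1<p<2$) estimate \eqref{eq: Lpdelta2h} together with its $\Delta$-version; the latter produces both a $\|\Delta(\tilde L G)\|_{L^p}$ term and a term with $\|\tilde L G\|_{L^p}$ multiplied by $|\Delta\delta^{-2}|$ as in \eqref{est:Ddelta}, and the factor $\bigl|1+(1+z)\ln|z/(1+z)|\bigr|$ appearing there is square-integrable against the relevant Gaussian-type weights with the correct $t$-decay (exactly the computation carried out at the end of the proof of Lemma \ref{lm:ED}). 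For $\Delta P_{32},\Delta P_{33},\Delta P_{34}$ I would use the Leibniz rule in $\Delta$ once more: the pieces where $\Delta$ hits $\tilde L G$ are estimated via $\|\Delta(\tilde L G)\|_{L^1}$, and those where it hits $\tilde\mu-I$ or $\tilde\mu^{-1}-I$ via $\|\Delta\tilde\mu\|_{L^2(dz)\otimes L^\infty(dx)}$ and $\|\Delta\tilde\mu\|_{L^4(dz)\otimes L^\infty(dx)}$, using the resolvent bound $\mathcal{K}_p$ of Proposition \ref{prop:new apriori} to control $\tilde\mu-I$ and $\Delta\tilde\mu$. Keeping track of the powers of $\eta$, $(1+\eta)$, $(1+t)^{-1}$ and $(1-\rho)^{-1}$ in the matrix-symbol bookkeeping of the Remark, the worst term again decays like $(1+t)^{-(l/2+1/2p-3/4)}$, which yields the claim.

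The main obstacle is the bound on $\|\Delta(\tilde L G)\|_{L^2}$ and $\|\Delta(\tilde L G)\|_{L^p}$: because $\tilde L G$ involves $\partial_x q$ and powers $|q|^l q$, its difference forces one to control $\Delta\partial_x q$, which — as in \cite[Lemma 5.37]{DZ-2} — splits into two parts of different regularity, and it is precisely the improved appendix estimates \eqref{est LG2}--\eqref{est:DLG1} that keep these under control without $t$-growth. Once those inputs and the $\Delta$-version of the $(\partial_z-\tilde L)m_-$ estimate are in hand, the remainder is the routine Leibniz-rule bookkeeping described above.
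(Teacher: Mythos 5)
Your proposal follows essentially the same route as the paper: the same decomposition $\Delta\partial_z F=\Delta P_1+\Delta P_2+\Delta P_3$ from \eqref{eq: parzF}, the same further splitting of $\Delta P_3$ through \eqref{eq: P3decomp} with Leibniz-in-$\Delta$ bookkeeping, and the same key inputs (the appendix bounds \eqref{est:DLG2}--\eqref{est:DLG1} for $\Delta\tilde L G$, the resolvent constants $\mathcal{K}_p$, and the $\Delta q$, $\Delta m_-$, $\Delta(\partial_z-\tilde L)m_-$ estimates from \cite{DZ-2}). The only minor deviation is that for the terms where $\Delta$ falls on $\delta^{\pm2}$ you invoke the pointwise bound \eqref{est:Ddelta} plus an integrability computation, whereas the paper uses the Fourier-type estimate \cite[(6.40)]{DZ-2}; both yield decay at least $(1+t)^{-(l-1)/2}$, which suffices.
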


\begin{proof}
    As is given by \eqref{eq: parzF}, we again obtain the estimates term by term. We only need to calculate one of $\Delta P_1$ and $\Delta P_2$. For $\Delta P_2$, choose $n\geq 3$ so that for ${\Delta P_2}= { I_1} +{ I_2}+ { I_3}$,

    \begin{eqnarray*}
        \norm{I_1}{L^2 (dz)} & =& \left\Vert \int e^{-i\theta \mathrm{ad} \sigma}(\Delta m_-^{-1})G(\partial_z - \tilde{L})m_- dy\right\Vert_{L^2}\\
        &\leq & \|G\|_{L^1(dx)}\|\Delta m_-^{-1}\|_{L^{\infty}(dz)\otimes L^{\infty}(dx)} \| (\partial_z - \tilde{L})m_-\|_{L^2(dz)\otimes L^\infty(dx)}\\
        &\leq& \Twomat{l+2}{l+3}{l/2+1/4}{5l+25/3}\mathcal{K}_{2(n-1)}^2M_{m_{\pm},\infty}(\rho,\lambda, \eta)\|\Delta r\|_{H^{1,1}}.
    \end{eqnarray*}

    \begin{eqnarray*}
        \norm{I_2}{L^2 (dz)} & =& \left\Vert \int e^{-i\theta \mathrm{ad} \sigma}m_-^{-1}G\Delta(\partial_z - \tilde{L})m_- dy\right\Vert_{L^2}\\
        &\leq & \|G\|_{L^1(dx)}\|m_-^{-1}\|_{L^{\infty}(dz)\otimes L^{\infty}(dx)} \|\Delta (\partial_z - \tilde{L})m_-\|_{L^2(dz)\otimes L^\infty(dx)}\\
        &\leq& \Twomat{l+1}{l+4}{l/2+1/4}{5l+25/3}\mathcal{K}_{2(n-1)}^2M_{m_{\pm},\infty}(\rho,\lambda, \eta)\|\Delta r\|_{H^{1,1}}.
    \end{eqnarray*}

   Note that here we make use of weaker estimate of $\|m_-\|_{L^\infty}$ and $\|\Delta(\partial_z - \tilde{L})m_-\|_{L^\infty}$ given by  \cite[Lemma 5.12, Lemma 5.23]{DZ-2}. 

    \begin{eqnarray*}
       \norm{I_3}{L^2 (dz)}& =& \left\Vert \int e^{-i\theta \mathrm{ad} \sigma}m_-^{-1}\Delta G (\partial_z - \tilde{L})m_- dy\right\Vert_{L^2}\\
        &\leq & \|\Delta G\|_{L^1(dx)}\|m_-^{-1}\|_{L^{\infty}(dz)\otimes L^{\infty}(dx)} \|(\partial_z - \tilde{L})m_-\|_{L^2(dz)\otimes L^\infty(dx)}\\
        &\leq& \Twomat{l+1}{l+3}{l/2+1/2}{5l+8}C_{\tau m}(\eta)M_{m_{\pm},\infty}(\rho,\lambda, \eta)\|\Delta r\|_{H^{1,1}}.
    \end{eqnarray*}

    $\Delta P_3$ could be estimated by following the same argument as \eqref{eq: P3decomp}, the first term $\Delta P_{31}$ would be divided into five pieces:

    \begin{eqnarray*}
    \Delta \left(\int e^{i \theta \mathrm{ad} \sigma}\left(v_\theta^{\#-1}\delta_-^{\sigma_3-1}\tilde{L}G\delta_-^{\sigma_3}v_\theta^{\#}\right) dy \right)&=& \int \Delta (\tilde{L}G)_{12}e^{-i\theta}\delta_-^{-2}dx+\int (\tilde{L}G)_{12}e^{-i\theta} \Delta \delta_-^{-2}dx\\
    &&-\int_{x<2tz}\Delta (\tilde{L}G)_{21}r^2e^{i\theta}\delta^2dx\\
    &&-\int_{x<2tz}(\tilde{L}G)_{21} (\Delta r^2) e^{i\theta}\delta^2dx\\
    &&-\int_{x<2tz}(\tilde{L}G)_{21}r^2e^{i\theta} \Delta \delta^2dx.
    \end{eqnarray*}

    Following the conclusion in \eqref{eq: I31}, we have:
    \begin{eqnarray*}
    &&\left\Vert\int \Delta (\tilde{L}G)_{12}e^{-i\theta}\delta_-^{-2}dx-\int_{x<2tz} \Delta (\tilde{L}G)_{21}r^2e^{i\theta}\delta^2dx\right\Vert_{L^2(dz)}\\
    &\leq& c\|\Delta \tilde{L}G\|_{L^2}+\frac{c\eta^2}{1-\rho}\|\Delta \tilde{L}G\|_{L^p}+\frac{c\eta^2}{1-\rho}\|\Delta \tilde{L}G\|_{L^p}\\
    \nonumber
    &\leq& c \frac{C_{\Delta LG2}(\rho, \eta)}{(1+t)^{\frac{l}{2}+\frac{1}{2p}-\frac{3}{4}}}\left\Vert \Delta r\right\Vert _{H^{1,1}} +c\frac{C_{\Delta LG2}^{2-2/p} C_{\Delta LG1}^{2/p-1}}{(1+t)^{\frac{l}{2}+\frac{1}{2p}-\frac{3}{4}}}\left\Vert \Delta r\right\Vert _{H^{1,1}}.
    \end{eqnarray*}

    Following \eqref{eq: Lpdelta2h}, we have:

    \begin{eqnarray*}
    \left\Vert\int_{x<2tz}(\tilde{L}G)_{21} (\Delta r^2) e^{i\theta}\delta^2dx\right\Vert_{L^2}
    &\leq& \frac{c\eta}{1-\rho}\|\Delta r\|_{H^{1,1}}\|\tilde{L}G\|_{L^p}\\
    &\leq&c\frac{C_{LG2}^{2-2/p} C_{LG1}^{2/p-1}}{(1+t)^{(l-1)/2}}\|\Delta r\|_{H^{1,1}}.
    \end{eqnarray*}

    \cite[(6.40)]{DZ-2} gives 

    \begin{equation}
    \left\vert \int e^{isz}\Delta \delta_-^{-2}dz \right\vert \leq \frac{c\eta(1+\eta)}{(1-\rho)^2}\frac{\|\Delta r\|_{H^{1,1}}}{(1+|s|)^{1/\alpha}}, 1<\alpha<\infty,
    \end{equation}
    
    \begin{equation}
    \left\Vert \int h(x) e^{i \diamond x}\Delta \delta_-^{-2}dx \right\Vert_{L^2(dz)} \leq \frac{c\eta(1+\eta)}{(1-\rho)^2}\|h\|_{L^p}\|\Delta r\|_{H^{1,1}},\quad  1<p<2.
    \end{equation}

    Using these inequalities, we have:

    \begin{eqnarray*}
    &&\left\Vert\int (\tilde{L}G)_{12}e^{-i\theta}\Delta \delta_-^{-2}dx-\int_{x<2tz} (\tilde{L}G)_{21}r^2e^{i\theta} \Delta \delta^2dx\right\Vert_{L^2}\\
    &\leq& \frac{c\eta(1+\eta)}{(1-\rho)^2}\|\tilde{L}G\|_{L^p}\|\Delta r\|_{H^{1,1}} + \frac{c\eta ^2}{(1-\rho)^2}\|\tilde{L}G\|_{L^p}\|\Delta r\|_{H^{1,1}}\\
    \nonumber
    &\leq& c\frac{C_{ LG2}^{2-2/p} C_{ LG1}^{2/p-1}}{(1+t)^{(l-1)/2}} \left\Vert \Delta r\right\Vert _{H^{1,1}}.
    \end{eqnarray*}

 This completes the estimation of $\Delta P_{31}$.   We only calculate  $\Delta P_{33}$. And we decompose it into two parts:

    \begin{eqnarray*}
    \Delta P_{331} &=& -c \int e^{i \theta \mathrm{ad} \sigma}\left(v_\theta^{\#-1}\delta_-^{\sigma_3-1}(\Delta \tilde{L}G)(\tilde{\mu}-I)\delta_-^{\sigma_3}v_\theta^{\#}\right) dy \\
    &&-c \int e^{i \theta \mathrm{ad} \sigma}\left(v_\theta^{\#-1}\delta_-^{\sigma_3-1}\tilde{L}G\Delta \tilde{\mu}\delta_-^{\sigma_3}v_\theta^{\#}\right) dy,
    \end{eqnarray*}

    \begin{eqnarray*}
    \Delta P_{332} &=& c\int (\tilde{L}G)_{12}(\tilde{\mu}-I)e^{-i\theta} \Delta \delta_-^{-2}dx\\
    &&+c\int_{x<2tz}(\tilde{L}G)_{21}(\tilde{\mu}-I) (\Delta r^2) e^{i\theta}\delta^2dx\\
    &&+c\int_{x<2tz}(\tilde{L}G)_{21}(\tilde{\mu}-I) r^2e^{i\theta} \Delta \delta^2dx
    \end{eqnarray*}

   then the estimates are given by the following:

    \begin{eqnarray*}
    \|\Delta P_{331}\|_{L^{2}(dz)} &=& \frac{c}{1-\rho}\|\Delta \tilde{L}G\|_{L^1}\|\tilde{\mu}-I\|_{L^2(dz) \otimes L^\infty(dx)}+\frac{c}{1-\rho}\|\tilde{L}G\|_{L^1}\|\Delta \tilde{\mu}\|_{L^2(dz) \otimes L^\infty(dx)}\\
    &\leq& \frac{C_{\Delta LG1}}{(1+t)^{\frac{l}{2}+\frac{1}{2p}-\frac{3}{4}}}\Twomat{1}{0}{1/4}{4}\|\Delta r\|_{H^{1,1}}+\frac{C_{LG1}(\rho, \eta)}{(1+t)^{(l-1)/2}}\Twomat{1}{0}{\frac{1}{2p}}{4}\mathcal{K}_{p}^2\|\Delta r\|_{H^{1,1}},
    \end{eqnarray*}

    \begin{eqnarray*}
    \|\Delta P_{332}\|_{L^{2}(dz)} &=& \frac{c}{1-\rho}\|\tilde{L}G\|_{L^1}\|\tilde{\mu}-I\|_{L^2(dz) \otimes L^\infty(dx)}\|\Delta r\|_{H^{1,1}}\\
    &&+c\|\tilde{L}G\|_{L^1}\|(|\Delta \delta_-^2|+|\Delta \delta_-^{-2}|)(\tilde{\mu}-I)\|_{L^2(dz) \otimes L^\infty(dx)}\\
    &\leq& \frac{C_{LG1}(\rho, \eta)}{(1+t)^{(l-1)/2}}\Twomat{1}{0}{1/4}{4}\|\Delta r\|_{H^{1,1}}+\frac{C_{LG1}(\rho, \eta)}{(1+t)^{(l-1)/2}}\Twomat{1}{0}{\frac{1}{2p}}{4}\mathcal{K}_{p}\|\Delta r\|_{H^{1,1}}.
    \end{eqnarray*}

   We finally get the estimates for $\Delta P_{34}$:

    \begin{eqnarray*}
    \|\Delta P_{34}\|_{L^{2}(dz)} &\leq& \frac{c}{1-\rho}\|\Delta \tilde{L}G\|_{L^1}\|\tilde{\mu}-I\|_{L^4(dz) \otimes L^\infty(dx)}^2\\
    &&+\frac{c}{1-\rho}\|\tilde{L}G\|_{L^1}\|\Delta \tilde{\mu}\|_{L^4(dz) \otimes L^\infty(dx)}\|\tilde{\mu}-I\|_{L^4(dz) \otimes L^\infty(dx)}\\
    &&+\frac{c}{1-\rho}\|\tilde{L}G\|_{L^1}\|\tilde{\mu}-I\|_{L^4(dz) \otimes L^\infty(dx)}^2\|\Delta r\|_{H^{1,1}}\\
    &&+c\|\tilde{L}G\|_{L^1}\|\tilde{\mu}-I\|_{L^4(dz) \otimes L^\infty(dx)}\|(|\Delta \delta_-^2|+|\Delta \delta_-^{-2}|)(\tilde{\mu}-I)\|_{L^4(dz) \otimes L^\infty(dx)}\\
    &\leq& \frac{C_{\Delta LG1}}{(1+t)^{\frac{l}{2}+\frac{1}{2p}-\frac{3}{4}}}\Twomat{1}{0}{1/4}{5}\mathcal{K}_4^2\|\Delta r\|_{H^{1,1}}+\frac{C_{LG1}(\rho, \eta)}{(1+t)^{(l-1)/2}}\Twomat{1}{2}{\frac{1}{2p}+\frac{1}{8}}{7}\mathcal{K}_{p}\mathcal{K}_{4}\|\Delta r\|_{H^{1,1}}\\
    &&+\frac{C_{LG1}(\rho, \eta)}{(1+t)^{(l-1)/2}}\Twomat{1}{0}{1/4}{5}\mathcal{K}_4^2\|\Delta r\|_{H^{1,1}}+\frac{C_{LG1}(\rho, \eta)}{(1+t)^{(l-1)/2}}\Twomat{1}{2}{\frac{1}{2p}+\frac{1}{8}}{7}\mathcal{K}_p\mathcal{K}_{4}\|\Delta r\|_{H^{1,1}}.
    \end{eqnarray*}
The claimed estimate follows from putting all estimates above together.

\end{proof}

\appendix
\section{Supplementary estimates}
\label{sec:suppesti}
In this appendix we give all the necessary estimates needed to obtain \eqref{est:F}-\eqref{est:DF}. Throughout the section we assume that $r(z)\in H^{1,1}_1$ with $\|r\|_{H^{1,1}} \leqslant \eta$, $\|r\|_{L^{\infty}} \leqslant \rho<1$. We will make use of the following properties
\begin{itemize}
    \item[1.] The $L^1$ integrability of $a(x)$;
    \item[2.] The uniform $L^\infty$ bounds of $m_\pm$ and $\mu$ obtained from  Section \ref{sec:dbar}.
\end{itemize}
We will closely follow the proofs in \cite[Section 5]{DZ-2} and make modifications accordingly.
We first recall the following operators:
\begin{align}
    & L:=\partial_z - i(x-2xt) \mathrm{ad} \sigma,\\
    &\tilde{L}:=i x \operatorname{ad} \sigma-2 t \partial_x
\end{align}
and these two operators are related by:
\begin{align}
    (L+2tQ) \mu & =\left(\partial_z-\tilde{L}\right) \mu \\
    (L+2tQ) m_{ \pm} & =\left(\partial_z-\tilde{L}\right) m_{ \pm}.
\end{align}
Following the conventions of  \cite[Section 5]{DZ-2}, we first introduce the notation $Lf_\theta=f'_\theta$, where $f_\theta = e^{i\theta \mathrm{ad}}f$. According to  Equation \eqref{eq:mu}, and the commutation relation
\begin{equation}
    \diamond C^\pm -C^\pm \diamond=-\frac{1}{2\pi i}\int
\end{equation}
we then deduce that
\begin{equation}
    (L+2tQ)\mu =C_{w_\theta}(L+2 t Q) \mu + C_{\omega'_\theta} \mu.
\end{equation}

From the jump relation \eqref{NLS.V}, we deduce that
\begin{equation}
    \partial_z m_+=(\partial_z m_-)v_\theta+m_-(i(x-2tz)\mathrm{ad} v_\theta)
\end{equation}

and in terms of the $L$ operator we have
\begin{equation}
\label{eq:L2tq}
    (L+2tQ) m_+=((L+2tQ) m_-)v_\theta+m_-(\partial_z v)_\theta
\end{equation}
which is an inhomogeneous RHP problem. 

First note that from the results of Section \ref{sec:dbar}, we easily deduce:
\begin{equation}
\label{eq:tildeLmu}
 \|(L+2tQ)\mu\|_{L^2} \leq \frac{c}{1-\rho} \| r'\|_{L^2}\|\mu\|_{L^\infty}
    \leq \Twomat{1}{0}{0}{1} M_{\mu,\infty}(\rho,\lambda, \eta).
\end{equation}

\begin{equation}
\label{eq:tildeLm}
 \|(L+2tQ)m_{\pm}\|_{L^2} \leq c \|(L+2tQ)\mu\|_{L^2}\|r\|_{L^\infty}+\| r'\|_{L^2}\|\mu\|_{L^\infty}
    \leq \Twomat{1}{0}{0}{1} M_{m_{\pm},\infty}(\rho,\lambda, \eta).
\end{equation}

For $\Delta (L+2tQ)\mu$, the analysis is divided into the following parts:
\begin{eqnarray*}
    \Delta (L+2tQ)\mu & =& \Delta \left[\left(1-C_{w_\theta}\right)^{-1} C_{w_\theta^{\prime}} \mu\right]\\
    &=& \left(\Delta\left(1-C_{w_\theta}\right)^{-1}\right) C_{w_\theta^{\prime}} \mu+\left(1-C_{w_\theta}\right)^{-1} C_{\Delta w_\theta^{\prime}} \mu+\left(1-C_{w_\theta}\right)^{-1} C_{w_\theta^{\prime}} \Delta \mu \\
    &=&  \Delta^1 (L+2tQ)\mu+\Delta^2 (L+2tQ)\mu+\Delta^3(L+2tQ)\mu
    \end{eqnarray*}

\begin{align}
\norm{\Delta^1 (L+2tQ)\mu}{L^2} &\leq\Twomat{0}{0}{0}{2}\|\Delta r\|_{H^{1,1}}\Twomat{1}{0}{0}{0}\|\mu\|_{L^\infty}\\
\nonumber
 &\leq \Twomat{1}{0}{0}{2}\|\Delta r\|_{H^{1,1}} M_{\infty}(\rho,\lambda, \eta).
\end{align}

\begin{align}
\norm{\Delta^2 (L+2tQ)\mu}{L^2} &\leq\Twomat{0}{0}{0}{1}\|\Delta r\|_{H^{1,1}}\|\mu\|_{L^\infty}\\
\nonumber
 &\leq \Twomat{0}{0}{0}{1}\|\Delta r\|_{H^{1,1}} M_{\infty}(\rho,\lambda, \eta).
\end{align}

\begin{align}
\norm{\Delta^3 (L+2tQ)\mu}{L^{4/3}} &\leq \norm{\left(1-C_{w_\theta}\right)^{-1}}{L^{4/3} \rightarrow L^{4/3}}\norm{C_{w'_\theta} \Delta\mu }{L^{4/3}}\\
\nonumber
&\leq \mathcal{K}_{4/3} C_{\Delta \mu w' }(\rho, \eta ) \|\Delta r\|_{H^{1,1}}.
\end{align}
    Here we make use of the following fact:
    \begin{align}
       \norm{C_{w'_\theta} \Delta\mu }{L^{4/3}} &\leq \mathbf{c}_{4/3}\norm{r'_\theta \Delta \delta }{L^{4/3}} C_{\Delta m}^2(\eta) \\
       \nonumber
      & \leq \mathbf{c}_{4/3}  C_{\Delta m}^2(\eta)  \left(\int_\mathbb{R} \left\vert  r'_\theta \Delta \delta_- \right\vert^{4/3} dz\right)^{3/4}\\
      \nonumber
      &\leq \mathbf{c}_{4/3}  C_{\Delta m}^2(\eta) 
      \left( \left(\int_\mathbb{R} |r'_\theta|^2 dz\right)^{2/3} \left(\int_\mathbb{R} |\Delta \delta_-|^4dz\right)^{1/3}\right)^{3/4}\\
      \nonumber
      &\leq   \mathbf{c}_{4/3}  C_{\Delta m}^2(\eta)   \frac{\lambda \norm{\Delta r}{H^{1,1}_1}}{1-\rho}\\
      \nonumber
      &:= C_{\Delta \mu w' }(\rho, \eta )\norm{\Delta r}{H^{1,1}_1}
      \end{align}



Now we consider the term of perturbation:
$G\left( q\right) =-ia\left( x\right) \left\vert q\right\vert ^{l}q$
and deduce various necessary estimates. First recall \cite[P.228]{DZ-2}:
\begin{eqnarray*}
    \tilde{L} \mathbf{Q} & = &-\int\left(((L+2 t Q) \mu) w_\theta-(\mu-I) w_\theta^{\prime}\right)+\int w_\theta^{\prime} \\
    & \equiv & L_Q+L_Q^{\prime}
\end{eqnarray*}
It is straightforward to check that
\begin{eqnarray*}
    \|L_Q\|_{L^\infty} & \leq & \|\left( L+2tQ \right)\mu\|_{L^2}\|r\|_{L^2}+\|\mu-I\|_{L^2}\|r'\|_L^2 \\
    & \leq & \Twomat{2}{0}{0}{1}M_{\infty}(\rho,\lambda, \eta)+\Twomat{2}{0}{0}{1} \\
    &\leq& \Twomat{2}{0}{0}{1}M_{\infty}(\rho,\lambda, \eta)\\
    \|L'_Q\|_{L^2} & \leq & c\eta \leq \Twomat{1}{0}{0}{0},
\end{eqnarray*}
and
\begin{align*}
    \|\Delta L_Q\|_{L^\infty} & \leq  \| \Delta^1 \left(L+2tQ \right)\mu\|_{L^2}\|r\|_{L^2} + \| \Delta^2 \left(L+2tQ \right)\mu\|_{L^2}\|r\|_{L^2}+ \| \Delta^3 \left(L+2tQ \right)\mu\|_{L^{4/3}}\|r\|_{L^4}\\
    &\quad + \|\left(L+2tQ \right)\mu\|_{L^2}\|\Delta r\|_{L^2}+\|\mu-I\|_{L^2}\|\Delta r'\|_L^2+\|\Delta \mu\|_{L^2}\|r'\|_L^2 \\
     &  \leq  \Twomat{1}{0}{0}{2}\|\Delta r\|_{H^{1,1}} M_{\infty}(\rho,\lambda, \eta)+ \Twomat{0}{0}{0}{1}\|\Delta r\|_{H^{1,1}} M_{\infty}(\rho,\lambda, \eta) + C_{\Delta \mu w' }(\rho, \eta )\norm{\Delta r}{H^{1,1}_1}\\
     &\quad +  \Twomat{1}{0}{0}{1} M_{\infty}(\rho,\lambda, \eta)\norm{\Delta r}{H^{1,1}_1}\\
     &=C_{\Delta LQ} \norm{\Delta r}{H^{1,1}_1}.
\end{align*}

For the term $\tilde{L}G$ , we have that
\begin{eqnarray*}
\widetilde{L}G &=&-i\left( ix\mathrm{ad}\sigma-2t\partial_{x}\right) a\left(
x\right) \left\vert q\right\vert^{l}\twomat{0}{q}{-\overline{q}}{0}  \\
&=&-i\twomat{0}{\beta}{-\overline{\beta}}{0}
\end{eqnarray*}
where
\begin{equation}
\beta =a\left( x\right) \left\vert q\right\vert ^{l}\left(
ix-2t\partial _{x}\right) q+la\left( x\right) \left\vert q\right\vert ^{l-2}q%
\mathrm{Re}\left(\bar{q}\left( ix-2t\partial_{x}\right) q\right) +\left\vert q\right\vert ^{l}q\left( ix-2t\partial _{x}\right) a\left( x\right).
\end{equation}
We then obtain the estimate of the $\tilde{L}G$:
{\small \begin{align}
\label{est LG2}
\left\Vert \tilde{L}G\right\Vert _{L^{2}\left( dx\right) } &\leq \left\Vert
a\left( x\right) \left\vert q\right\vert ^{l}\left(ix-2t\partial
_{x}\right) q+la\left( x\right) \left\vert q\right\vert ^{l-2}q\mathrm{Re}
\left( \bar{q}\left( ix-2t\partial _{x}\right) q\right) \right\Vert
_{L^{2}\left( dx\right) }+\left\Vert \left\vert q\right\vert ^{l}q\left(
2t\partial _{x}\right) a\left( x\right) \right\Vert _{L^{2}\left(
dx\right) } \\
\nonumber
        &\leq \left\Vert a\left( x\right) \right\Vert _{L^{2}\left( dx\right)
}\left\Vert q\right\Vert _{L^{\infty }\left( dx\right) }^{l}\left\Vert
L_{Q}\right\Vert _{L^{\infty }\left( dx\right) }+\left\Vert a\left( x\right)
\right\Vert _{L^{\infty }\left( dx\right) }\left\Vert q\right\Vert
_{L^{\infty }\left( dx\right) }^{l}\left\Vert L_{Q^{\prime }}\right\Vert
_{L^{2}\left( dx\right) }\\
\nonumber
 & + t\left\Vert \partial _{x}a\left( x\right) \right\Vert
_{L^{2}\left( dx\right) }\left\Vert q\right\Vert _{L^{\infty }\left(
dx\right) }^{l+1} \\
\nonumber
&\leq \norm{ a(x)}{L^2} \Twomat{l}{l}{l/2}{5l}\Twomat{2}{0}{0}{1}M_{\infty}(\rho,\lambda, \eta) + \norm{a(x)}{L^\infty} \Twomat{l}{l}{l/2}{5l}\Twomat{1}{0}{0}{0} \\
\nonumber
&+ \norm{\partial_x a(x)}{L^1} t\Twomat{l+1}{l+1}{(l+1)/2}{5l+5}\\
\nonumber
&\leq  \Twomat{l+1}{l+1}{(l-1)/2}{5l}M_{\infty}(\rho,\lambda, \eta)\left(\norm{ a(x)}{L^2}+ \norm{a(x)}{L^\infty} +\norm{\partial_x a(x)}{L^1}\right)\\
\nonumber
&= \frac{C_{LG2}(\rho, \eta)}{(1+t)^{(l-1)/2}}.
\end{align}}
And
{\small \begin{align}
\label{est:LG1}
\left\Vert \tilde{L}G\right\Vert _{L^{1}\left( dx\right) } &\leq \left\Vert
a\left( x\right) \left\vert q\right\vert ^{l}\left( ix-2t\partial
_{x}\right) q+l a\left( x\right) \left\vert q\right\vert ^{l-2}q\mathrm{Re}
\left( \bar{q}\left( ix-2t\partial _{x}\right) q\right) \right\Vert
_{L^{1}\left( dx\right) }+\left\Vert \left\vert q\right\vert ^{l}q\left(
2t\partial _{x}\right) a\left( x\right) \right\Vert _{L^{1}\left(
dx\right) } \\
\nonumber
&\leq \left\Vert a\left( x\right) \right\Vert _{L^{1}\left( dx\right)
}\left\Vert q\right\Vert _{L^{\infty }\left( dx\right) }^{l}\left\Vert
L_{Q}\right\Vert _{L^{\infty }\left( dx\right) }+\left\Vert a\left( x\right)
\right\Vert _{L^{2}\left( dx\right) }\left\Vert q\right\Vert _{L^{\infty
}\left( dx\right) }^{l}\left\Vert L_{Q^{\prime }}\right\Vert _{L^{2}\left(
dx\right) }\\
\nonumber
&  +t\left\Vert
\partial _{x}a\left( x\right) \right\Vert _{L^{1}\left( dx\right)
}\left\Vert q\right\Vert _{L^{\infty }\left( dx\right) }^{l+1} \\
\nonumber
&\leq \frac{1}{t^{l/2}} \left\Vert  a\left( x\right) \right\Vert _{L^{1}\left( dx\right) }  \Twomat{2}{0}{0}{1}M_{\infty}(\rho,\lambda, \eta)+ \frac{1}{t^{l/2}}\left\Vert a\left( x\right) \right\Vert
_{L^{2}\left( dx\right) } \Twomat{1}{0}{0}{0}+\frac{1}{t^{(l-1)/2}}\left\Vert \partial _{x}a\left( x\right) \right\Vert _{L^{1}\left(
dx\right) }\\
\nonumber
&\leq   \Twomat{2}{0}{{(l-1)/2}}{1}\left(\norm{a(x)}{L^1}+\norm{\partial_x a(x)}{L^1}\right)M_{\infty}(\rho,\lambda, \eta)\\
\nonumber
&=\frac{C_{LG1}(\rho, \eta)}{(1+t)^{(l-1)/2}}.
\end{align}}
Finally, the difference, we first have
\begin{align}
\label{est:DLG2}
\Vert \Delta \tilde{L}G\Vert _{L^{2}(dx)} &\leqslant \left\Vert
a(x)\Delta \left( |q|^{l}\right) \tilde{L}\mathbf{Q}\right\Vert
_{L^{2}(dx)}+\left\Vert a(x)\Delta \left( |q|^{l-2}q\right) \tilde{L}%
\mathbf{Q}\right\Vert _{L^{2}(dx)}+\left\Vert \Delta \left( |q|^{l}q\right) 
\tilde{L}a(x)\right\Vert _{L^{2}(dx)}\\
\nonumber
&\quad +\left\Vert a(x)\left\vert q\right\vert
^{l}\Delta \tilde{L}\mathbf{Q}\right\Vert _{L^{2}(dx)}  \\
\nonumber
&\leqslant \left\Vert a\left( x\right) \right\Vert _{L^{2}\left( dx\right)
}\left\Vert q\right\Vert _{L^{\infty }\left( dx\right) }^{l-1}\left\Vert
\Delta q\right\Vert _{L^{\infty }\left( dx\right) }\left\Vert
L_{Q}\right\Vert _{L^{\infty }\left( dx\right) } +\left\Vert a\left( x\right) \right\Vert _{L^{\infty }\left( dx\right)
}\left\Vert q\right\Vert _{L^{\infty }\left( dx\right) }^{l-1}\left\Vert
\Delta q\right\Vert _{L^{\infty }\left( dx\right) }\left\Vert L_{Q^{\prime
}}\right\Vert _{L^{2}\left( dx\right) } \\
\nonumber
&+\left\Vert a\left( x\right) \right\Vert _{L^{2}\left( dx\right)
}\left\Vert q\right\Vert _{L^{\infty }\left( dx\right) }^{l}\left\Vert
\Delta L_{Q}\right\Vert _{L^{\infty }\left( dx\right) } +\left\Vert a\left( x\right) \right\Vert _{L^{\infty }\left( dx\right)
}\left\Vert q\right\Vert _{L^{\infty }\left( dx\right) }^{l}\left\Vert
\Delta L_{Q^{\prime }}\right\Vert _{L^{2}\left( dx\right) } \\
\nonumber
&+\left\Vert xa\left( x\right) \right\Vert _{L^{2}\left( dx\right)
}\left\Vert q\right\Vert _{L^{\infty }\left( dx\right) }^{l}\left\Vert
\Delta q\right\Vert _{L^{\infty }\left( dx\right) } +t\left\Vert \partial _{x}a\left( x\right) \right\Vert _{L^{2}\left(
dx\right) }\left\Vert q\right\Vert _{L^{\infty }\left( dx\right)
}^{l}\left\Vert \Delta q\right\Vert _{L^{\infty }\left( dx\right) } \\
\nonumber
&\leqslant \frac{C_{\Delta LG2}(\rho, \eta)}{(1+t)^{\frac{l}{2}+\frac{1}{2p}-\frac{3}{4}}}\left\Vert
\Delta r\right\Vert _{H^{1,1}}.
\end{align}
The difference in the $L^1$ norm can be estimated as
{\small\begin{align}
\label{est:DLG1}
\Vert \Delta \tilde{L}G\Vert _{L^{1}(dx)} &\leqslant  \left\Vert
a(x)\Delta \left( |q|^{l}\right) \tilde{L}\mathbf{Q}\right\Vert
_{L^{1}(dx)}+\left\Vert a(x)\Delta \left( |q|^{l-2}q^{2}\right) \tilde{L}%
\mathbf{Q}\right\Vert _{L^{1}(dx)}+\left\Vert \Delta \left( |q|^{l}q\right) 
\tilde{L}a(x)\right\Vert _{L^{1}(dx)}\\
\nonumber
&\quad +\left\Vert a(x)\left\vert q\right\vert
^{l}\Delta \tilde{L}\mathbf{Q}\right\Vert _{L^{1}(dx)}  \\
\nonumber
&\leqslant \left\Vert a\left( x\right) \right\Vert _{L^{1}\left( dx\right)
}\left\Vert q\right\Vert _{L^{\infty }\left( dx\right) }^{l-1}\left\Vert
\Delta q\right\Vert _{L^{\infty }\left( dx\right) }\left\Vert
L_{Q}\right\Vert _{L^{\infty }\left( dx\right) }+\left\Vert a\left( x\right) \right\Vert _{L^{2}\left( dx\right)
}\left\Vert q\right\Vert _{L^{\infty }\left( dx\right) }^{l-1}\left\Vert
\Delta q\right\Vert _{L^{\infty }\left( dx\right) }\left\Vert L_{Q^{\prime
}}\right\Vert _{L^{2}\left( dx\right) } \\
\nonumber
&+\left\Vert a\left( x\right) \right\Vert _{L^{1}\left( dx\right)
}\left\Vert q\right\Vert _{L^{\infty }\left( dx\right) }^{l}\left\Vert
\Delta L_{Q}\right\Vert _{L^{\infty }\left( dx\right) } +\left\Vert a\left( x\right) \right\Vert _{L^{2}\left( dx\right)
}\left\Vert q\right\Vert _{L^{\infty }\left( dx\right) }^{l}\left\Vert
\Delta L_{Q^{\prime }}\right\Vert _{L^{2}\left( dx\right) } \\
\nonumber
&+\left\Vert xa\left( x\right) \right\Vert _{L^{1}\left( dx\right)
}\left\Vert q\right\Vert _{L^{\infty }\left( dx\right) }^{l}\left\Vert
\Delta q\right\Vert _{L^{\infty }\left( dx\right) } +t\left\Vert \partial _{x}a\left( x\right) \right\Vert _{L^{1}\left(
dx\right) }\left\Vert q\right\Vert _{L^{\infty }\left( dx\right)
}^{l}\left\Vert \Delta q\right\Vert _{L^{\infty }\left( dx\right) } \\
\nonumber
&\leqslant \frac{C_{\Delta LG1}(\rho, \eta)}{(1+t)^{ \frac{l}{2}+\frac{1}{2p}-\frac{3}{4}}}\left\Vert
\Delta r\right\Vert _{H^{1,1}}.
\end{align}}

and the calculation of $\|\tilde{L}G\|_{L^p}, \|\Delta \tilde{L}G\|_{L^p}, 1<p<\infty$ is  the same  as above.

\section*{Funding Declaration}The first author was partially supported by NSF grant DMS-2350301 and by Simons foundation MP-TSM-
00002258. The second author is partially supported by NSFC Grant No.12201605. The third author is partially supported by NSFC Grant No.12201605.

\section*{Declaration of Conflict of Interest}

The authors declare that they have no known competing  interests or personal relationships that could have appeared to influence the work reported in this paper.
\section*{data availability statement}
The authors declare that this manuscript has no associated data.
\bibliographystyle{amsplain}

\end{document}